\documentclass[reqno,twoside,12pt,a4paper]{amsart}
	\topmargin=0cm
   \setcounter{page}{1}
	\markboth{}{}
	\textwidth=16cm
	\textheight=23cm
\setlength{\oddsidemargin}{2mm}
\setlength{\evensidemargin}{2mm}
 
\usepackage{latexsym}
\usepackage{amsmath}
\usepackage{amssymb}
\usepackage{cases}
\usepackage{mathrsfs}
\usepackage{amsthm}
%
%
\usepackage{cite}
\def\interior{\mathop{\rm int}\nolimits}



%
%
\usepackage[usenames,dvipsnames]{color}

\def\pier #1{#1}
\def\takeshi #1{#1}
\def\pcol #1{#1}

%


\newtheorem{theorem}{\textbf{Theorem}}[section]
\newtheorem{lemma}{\textbf{Lemma}}[section]
\newtheorem{proposition}{\textbf{Proposition}}[section]

\numberwithin{equation}{section}
\newtheorem{lemmaa}{\textbf{Lemma A.\hskip-4pt}}
\newtheorem{remarka}{\textbf{Remark A.\hskip-4pt}}
\renewcommand{\theequation}{\thesection.\arabic{equation}}

\title[Vanishing \pier{boundary} diffusion in a {C}ahn--{H}illiard equation]
{Vanishing diffusion in a dynamic boundary condition\\[2mm] for the {C}ahn--{H}illiard equation}

\author[P. Colli]{Pierluigi Colli}
\address{Pierluigi Colli: 
Dipartimento di Matematica, Universit\`a degli Studi di Pavia\\
and Research Associate at the IMATI -- C.N.R. Pavia\\
Via Ferrata~5, 27100 Pavia, Italy}
\email{pierluigi.colli@unipv.it}

\author[T.\ Fukao]{Takeshi Fukao}
\address{Takeshi Fukao: Department of Mathematics, Faculty of Education, 
Kyoto University of Education, 
1~Fujinomori, Fukakusa, Fushimi-ku, Kyoto~612-8522 Japan}
\email{fukao@kyokyo-u.ac.jp}

\dedicatory{}
\subjclass[2000]{}
\pagestyle{myheadings}

\begin{document}

\thispagestyle{empty}
\maketitle

\begin{abstract}
The initial boundary value problem for a 
{C}ahn--{H}illiard system subject to a dynamic boundary condition 
of {A}llen--{C}ahn type is \pier{treated. 
The vanishing} of the surface diffusion on the dynamic boundary condition is the point of emphasis. 
By the asymptotic \pier{analysis as the diffusion coefficient tends to $0$, 
one can expect that the solutions of the surface diffusion problem converge to 
the solution of the problem without the surface diffusion. This is actually the case, but
the solution of the limiting problem naturally looses some regularity. 
Indeed, the system we investigate is rather complicate due to the presence 
of nonlinear terms including general maximal monotone graphs 
both in the bulk and on the boundary. The two graphs are
related each to the other by a growth condition, with the boundary graph  that dominates the other one.  In general, 
at the asymptotic limit a weaker form of the boundary condition is obtained,  but in the case  when the
two graphs exhibit the same growth the boundary condition still holds almost everywhere.} \\[2mm]
\noindent {\sc Key words:}
{C}ahn--{H}illiard system, dynamic boundary condition, 
non-smooth potentials, \pier{convergence}, well-posedness, regularity.\\[2mm]
\noindent {\sc AMS (MOS) Subject Classification:} 35K61, 35K25, 74N20, 80A22
\end{abstract}

\section{Introduction}
\setcounter{equation}{0}

In this paper,  we \pier{deal with the initial boundary value problem for a 
{C}ahn--{H}illiard system subject to a dynamic boundary condition 
of {A}llen--{C}ahn type. We study the limiting behavior of the system
as  the coefficient of the diffusive term in the boundary condition goes to $0$ 
and we investigate the limit problem.}

\pier{The {C}ahn--{H}illiard system \cite{CH58} is a celebrated model describing the spinodal decomposition by the simple framework 
of partial differential equations.  It is a phenomenological model 
that find its root in the work of J.~W.~Cahn~\cite{Ca61}, 
who studied the effects of interfacial energy on the stability 
of spinodal states in solid binary solutions, and this
took origin from the previous collaboration with J.~{\takeshi E.}~Hilliard~\cite{CH58}.
In the recent decades, a lot of research contributions has been devoted 
to Cahn--Hilliard and viscous Cahn--Hilliard~\cite{No88, NP} systems. 
An impressive number of related references
can be found in the literature and it would be worthless to report a list here. 
Let us simply refer to the recent review paper\cite{Mi17}.}

Let $0<T<\infty$, 
and $\Omega \subset \mathbb{R}^{d}$ ($d=2$ or $3$) be a bounded domain with smooth boundary 
$\Gamma := \partial \Omega$. Let 
$u, \mu: Q:=(0,T) \times \Omega \to \mathbb{R}$ be two \pier{unknowns, representing}
the order parameter and the chemical potential, respectively.
They \pier{have to satisfy}
\begin{align} 
	& \partial_t u - \Delta \mu =0 
	\quad \text{in }Q, 
	\label{equ1}
	\\
	&\mu = \tau \partial_t u-\Delta u + F'(u)
	\quad \text{in }Q, 
	\label{equ2}
\end{align}
where $\tau \ge 0$ is a constant \pier{coefficient (the viscous {C}ahn--{H}illiard system
corresponds to the case} $\tau >0$);  
$\partial_t$  denotes 
the time derivative; 
$\Delta$ denotes the {L}aplacian. 
The information on \pier{the potential $F$ and its derivative  $F'$} is given later. 
From the viewpoint \pier{of partial differential equations, in order 
to solve the system~(\ref{equ1})--(\ref{equ2})} we need some auxiliary conditions, namely 
the \pier{boundary and initial conditions}. 
Here we set up the following boundary conditions and 
initial condition: 
\begin{align} 
	& \partial_{\boldsymbol{\nu}} \mu =0 
	\quad  \text{on }\Sigma, 
	\label{equ3}
	\\
	& u_{|_{\Gamma}} =u_\Gamma 
	\quad  \text{on }\Sigma,
	\label{equ4}
	\\
	& u(0)=u_0 \quad \text{in } \Omega,
	\label{equ6a}
\end{align}
where $\partial_{\boldsymbol{\nu}}$ denotes 
the outward normal derivative on $\Gamma$; 
$u_{|_\Gamma}$ stands for the trace of $u$ on the boundary $\Gamma$. 
The point of emphasis is \pier{now} the boundary condition for $u$. 
It seems that (\ref{equ4}) \pier{is setting a non-homogeneous {D}irichlet boundary condition for $u$,
instead  $u_\Gamma: \Sigma:=(0,T) \times \Gamma \takeshi{\to \mathbb{R}}$ is also unknown and is required to satisfy the dynamic boundary condition}
\begin{align} 
	&\partial_t u_\Gamma+ \partial_{\boldsymbol{\nu}} u  - \kappa \Delta_\Gamma u_\Gamma 
	+F'_\Gamma (u_\Gamma)=0
	\quad  \text{on }\Sigma, 
	\label{equ5}
\end{align}
where $\kappa > 0$ is a \pier{coefficient, intended to tend to 0, and it is {\it the most 
important parameter in this paper}. Besides, 
$\Delta _{\Gamma }$ denotes the {L}aplace--{B}eltrami operator
on $\Gamma $ (see, e.g., \cite[Chapter~3]{Gri09}). We observe that the coupling of  
(\ref{equ1})--(\ref{equ2}) and \eqref{equ5} gives rise to a sort of transmission problem. 
We can also set the initial  condition for $u_\Gamma$, i.e., 
\begin{align} 
	& u_\Gamma(0)=u_{0\Gamma} \quad \text{on }\Gamma
	\label{equ6}
\end{align}
to complete the problem.}

The nonlinear terms $F'$ and $F_\Gamma'$ are usually referred 
as \pier{the derivatives} of the double-well potentials $F$ and $F_\Gamma$. 
Therefore, the problem \pier{(\ref{equ1})--(\ref{equ6}) yields} the 
{C}ahn--{H}illiard system when $\tau =0$ (resp. the viscous {C}ahn--{H}illiard 
system \pier{if $\tau >0$}) 
with the \pier{{N}eumann homogeneous boundary condition (\ref{equ3})}
for the chemical  potential $\mu$ and the dynamic boundary condition of
{A}llen--{C}ahn type (\ref{equ5}) for \pier{the trace $u_\Gamma$ of the 
order parameter $u$.}
Typical and physically significant examples \pier{for potentials like $F$ and $F_\Gamma$}
are the so-called classical regular potential, the logarithmic potential,
and the double obstacle potential, which are defined by
\begin{align*}
  & {F}_{\rm reg}(r) := \frac 14 \, (r^2-1)^2 \,,
  \quad r \in \mathbb{R}, 
  \\
  & {F}_{\rm log}(r) := (1+r)\ln (1+r)+(1-r)\ln (1-r)  - c_1 r^2 \,,
  \quad r \in (-1,1),
  \\
  & {F}_{\rm obs}(r) := c_2 (1-r^2) 
  \quad \text{if $|r|\leq1$}
  \quad \text{and} \quad
  {F}_{\rm obs}(r) := +\infty
  \quad \text{if $|r|>1$} \takeshi{,}
\end{align*}
where 
$c_1>1$ and $c_2>0$ are positive constants\pier{, fixed so} that 
${F}_{\rm log}$ and ${F}_{\rm obs}$ are nonconvex.
\pier{In this paper, we treat the nonlinear terms  $F'$ in \eqref{equ2} and $F_\Gamma'$ in \eqref{equ5} by
separating  them into two parts, i.e., by assuming that $F'=\beta+\pi$ and $F_\Gamma'=\beta_\Gamma+\pi_\Gamma$,
where $\beta$, $\beta_\Gamma$ are the monotone parts (derivatives or subdifferentials of the convex parts of $F$ and $F_\Gamma$), 
while $\pi$, $\pi_\Gamma$ play as the (smooth) anti-monotone parts.
For example, in the case of the classical regular potential, 
$F_{\rm reg}'=\beta_{\rm reg}+\pi_{\rm reg}$
is specified as the derivative of $F_{\rm reg}$, that is
\begin{equation*}
	F'_{\rm reg}(r)=r^3-r, \quad \hbox{with} \quad \beta_{\rm reg}(r):=r^3, \quad \pi_{\rm reg}(r):=-r.
\end{equation*}
On the other hand, in the case of the non-smooth double obstacle potential $F_{\rm obs}$,
$\beta_{\rm obs}$ is defined by the subdifferential of the indicator function of $[-1,1]$, so to have 
\begin{equation}
	F'_{\rm obs}(r)=\partial I_{[-1,1]}(r) - 2 c_2 r, \quad \beta_{\rm obs}(r):=\partial I_{[-1,1]}(r), \quad \pi_{\rm obs}(r):=-2 c_2r. 
	\label{pier21}
\end{equation}
We point out that, as a general rule, we can always use the subdifferentials for $\beta$, $\beta_\Gamma$,
and these subdifferentials reduce to the derivatives whenever a derivative exists. Please note that the subdifferentials 
may also be multivalued graphs, as it happens in \eqref{pier21}.
Thus, we generally interpret the equation in (\ref{equ2}) by
\begin{equation} 
	\mu = \tau \partial_t u-\Delta u +\xi + \pi(u), \quad 
	\xi \in \beta(u)
	\quad \text{in }Q
		\label{pier22}
\end{equation}
and rewrite the boundary condition  (\ref{equ5})  as
\begin{align} 
	&\partial_t u_\Gamma+ \partial_{\boldsymbol{\nu}} u  - \kappa \Delta_\Gamma u_\Gamma 
	+\xi_\Gamma + \pi_\Gamma (u_\Gamma)=0,
	\quad  \xi_\Gamma \in \beta_\Gamma(u_\Gamma)
	\quad \text{on }\Sigma.
	\label{pier23}
\end{align}
Of course, we can chose different potentials for $F$ and $F_\Gamma$, which may lead 
in particular to different graphs $\beta$ and $\beta_\Gamma$; about possible relations 
among them, according to a rather usual setting
(cf., e.g.,\pcol{\cite{CC13, CF15b,CF15, CFW, CGNS17, CGS14, CGS17,CGS18, LW19, Sca19}}) here it is supposed 
that $\beta_\Gamma$ dominates $\beta$ in \pcol{the sense of} assumption (A2) in Section~2.    
We note that in our framework  it is possible to choose similar or even equal graphs $\beta$ and $\beta_\Gamma$.}

\pier{Next, let us emphasize a property of the PDE system under consideration. Indeed,
we see from (\ref{equ1}), (\ref{equ3}), (\ref{equ6a}) that the following mass conservation holds:
\begin{equation*}
	\int_{\Omega} u(t) dx = \int_{\Omega} u_0 dx 
	\quad \text{for all } t \in [0,T]. 
\end{equation*}
Of course, in the analysis of the problem expressed by (\ref{equ1}), (\ref{equ3})--(\ref{equ6}), \eqref{pier22}--\eqref{pier23}
this property of mass conservation plays a role. Let us review some contribution 
to this class of problems. As some pioneering result for the {C}ahn--{H}illiard system 
with a dynamic boundary condition of heat equation type, 
the global existence and uniqueness of solutions was treated in \cite{RZ03} and  
the convergence to equilibrium was shown in \cite{WZ04}. 
Moreover, the {C}ahn--{H}illiard system
with a semi-linear equation as dynamic boundary condition (including the {A}llen--{C}ahn equation), 
was addressed from different viewpoints: the long time behaviour was studied in \cite{GM09, MZ05}, while 
other investigations treated the coupling with the heat equation in \cite{Gal07}, the problem with memory in \cite{CGG11}, 
some regularity results for the problem with a singular potential in \cite{CGS14, GMS09}, 
the boundary mass constraint in \cite{CF15b}, and so on.}

\pier{In this paper, we focus on the asymptotic analysis of the surface diffusion term on the dynamic boundary condition (\ref{pier23}). 
By the asymptotic limit as $\kappa \searrow 0$, one can wonder whether
the solution of the problem with surface diffusion converges to 
the one of the same problem without surface diffusion, i.e., with \eqref{pier23} replaced by 
\begin{align} 
	&\partial_t u_\Gamma+ \partial_{\boldsymbol{\nu}} u
	+\xi_\Gamma + \pi_\Gamma (u_\Gamma)=0,
	\quad  \xi_\Gamma \in \beta_\Gamma(u_\Gamma)
	\quad \text{on }\Sigma.
	\label{pier24}
\end{align}
The answer is in the affirmative, but it turns out that 
the solution of the limiting problem apparently looses regularity, due to the absence 
of the diffusive term on the boundary.  As the reader will see, in general the terms 
$\partial_{\boldsymbol{\nu}} u$ and $\xi_\Gamma$ in \eqref{pier24} are not functions but elements of a dual space,
and the inclusion $ \xi_\Gamma \in \beta_\Gamma(u_\Gamma)$ has to be suitably 
reinterpreted and generalized. However, in the case when the graphs $\beta$ and $\beta_\Gamma$
have the same growth, it is proven that the boundary condition \eqref{pier24} holds almost everywhere
on $\Sigma$.}

\pcol{In the light of our approach, we aim to quote the recent paper~\cite{Sca19}, which deals with
a highly nonlinear extension of the Cahn--Hilliard system with dynamic boundary condition, including nonlinear viscosity terms
in the equation corresponding to \eqref{pier22} and in the boundary condition. 
In fact, our asymptotic results can be compared with the ones contained in 
\cite[Theorem~2.11]{Sca19}, where a convergence statement similar to our Theorem~\ref{convergence} below is given 
in terms of a subsequence $\kappa_n$ going to $0$, both for a viscous Cahn--Hilliard type system 
and, under suitable regularity properties on data, for a nonlinear variation of the Cahn--Hilliard system. However, 
due to the presence of additional nonlinearities, nothing is investigated in~\cite{Sca19} about convergence to solutions satisfying the 
boundary condition almost everywhere (as instead we do here).}

A brief outline of the present paper along with a short description of the various items is as follows.
In Section~2, after \pier{setting up the notation and technical tools, as well as the known results for the case $\kappa >0$, 
we will state the main theorems. These theorems are the convergence-existence result 
and the continuous dependence with respect to data when $\kappa=0$,
the uniqueness of the solution being included in the second 
theorem. 
In Section~3, we work with the approxiate solutions and collect the 
uniform estimates, being able to check the existence of 
solutions for the problem with $\kappa >0$. 
In Section~4, we consider the limiting procedure as $\kappa \searrow 0$ and give  
the proof of the continuous dependence result.  
In Section~5, we deal with an improvement of the convergence-existence theorem,
including some regularity properties for the solution, under the stronger assumption (A2)$^\prime$.}
\smallskip

Let us include a detailed index of sections and subsections as follows:\pier{%
\begin{itemize}
 \item[1.] Introduction
 \item[2.] Notation and main results
  \begin{itemize}
  \item[2.1.] Well-posedness for $\kappa\in (0,1]$
  \item[2.2.] Asymptotic analysis as $\kappa \searrow 0$
  \item[2.3.] Convergence-existence theorem
   \item[2.4.] Continuous dependence on data
 \end{itemize}
 \item[3.] Approximate solutions
 \begin{itemize}
  \item[3.1.] Uniform estimates
  \item[3.2.] Proof of Proposition~\ref{prop}
 \end{itemize}
 \item[4.] Proof of main theorems
  \begin{itemize}
  \item[4.1.] Proof of Theorem~\ref{convergence}
  \item[4.2.] Proof of Theorem~\ref{contdep}
 \end{itemize}
  \item[5.] Improvement of the convergence-existence theorem
  \item[] Appendix
  \item[] Acknowledgements
\end{itemize}%
}

\section{Notation and main results}
\setcounter{equation}{0}

In this paper, we \pier{deal with the following spaces}
\begin{equation*}
	H:=L^2(\Omega ), 
	\quad 
	H_\Gamma :=L^2(\Gamma ), 
	\quad 
	V:=H^1(\Omega ), 
	\quad 
	V_\Gamma :=H^1(\Gamma), 
	\quad W_\Gamma:=H^{1/2}(\Gamma),
\end{equation*}
\pier{that are all {H}ilbert spaces with respect to} the usual norms and inner products, 
\pier{denoted} by  
$| \cdot |_{H}$ and $(\cdot,\cdot )_{H}$, and so on. 
Moreover, $V^*$ and $V_\Gamma^*$ stand for the dual spaces 
of $V$ and $V_\Gamma$, respectively. \pier{The notation
$\langle \cdot, \cdot \rangle_{V^*,V}$ is used for the duality pairing} between 
\pier{$V^*$ and $V$. It is understood that $H$ (resp. $H_\Gamma$) is embedded in  $V^*$ (resp. $V_\Gamma^*$)
in the ususal way, i.e.,  $\langle w, z \rangle_{V^*,V} = (w,z )_{H} $ for all $w\in H$ and $z\in V$
(resp. $\langle w_\Gamma, z_\Gamma \rangle_{V_\Gamma^*,V_\Gamma} = (w_\Gamma,z_\Gamma )_{H_\Gamma} $ for all $w_\Gamma\in H_\Gamma$ and $z_\Gamma\in V_\Gamma$).}
We also use $W_\Gamma^*$ for the dual space of $W_\Gamma$ \pier{and, 
in view of the actual identification between $H_\Gamma$ and $H_\Gamma^*$,
it turns out that $V_\Gamma^*$ and $ W_\Gamma^*$ are completely isomorph to  
$H^{-1}(\Gamma)$ and} $ H^{-1/2}(\Gamma)$, respectively (see, \cite[Theorem~7.6, p.~36]{LM72}). 

\smallskip

We start from the following {C}ahn--{H}illiard system with the dynamic boundary condition 
of {A}llen--{C}ahn type: for \pier{all $\tau \in [0,1] $ and  $\kappa \in (0,1]$ (the upper 
bounds are taken as $1$ for simplicity), one has to find a quintuplet $(u,\mu, \xi, u_\Gamma, \xi_\Gamma )$ such that}
\begin{align} 
	& \partial_t u - \Delta \mu =0 
	\quad \text{a.e.\ in }Q, 
	\label{chd1}
	\\
	&\mu = \tau \partial_t u-\Delta u + \xi + \pi (u)-g , \quad 
	\xi \in \beta (u)
	\quad \text{a.e.\ in }Q, 
	\label{chd2}
	\\
	& \partial_{\boldsymbol{\nu}} \mu =0 
	\quad  \text{a.e.\ on }\Sigma, 
	\label{chd3}
	\\
	& u_{|_{\Gamma}} =u_\Gamma 
	\quad  \text{a.e.\ on }\Sigma,
	\label{chd4}
	\\ 
	&\partial_t u_\Gamma+ \partial_{\boldsymbol{\nu}} u  - \kappa \Delta_\Gamma u_\Gamma 
	+ \xi _\Gamma + \pi_\Gamma(u_\Gamma)= g_\Gamma, \quad 
	\xi_\Gamma \in \beta_\Gamma(u_\Gamma)
	\quad  \text{a.e.\ on }\Sigma, 
	\label{chd5}
	\\
	& u(0)=u_0 \quad \text{a.e.\ in } \Omega, 
	\quad u_\Gamma(0)=u_{0\Gamma} \quad \text{a.e.\ on }\Gamma{\takeshi ,}
	\label{chd6}
\end{align}
where $g : Q \to \mathbb{R}$, $g_\Gamma : \Sigma \to \mathbb{R}$, 
$u_0:\Omega \to \mathbb{R}$, $u_{0\Gamma}:\Gamma \to \mathbb{R}$ are given functions. 
Hereafter we assume that:
\begin{enumerate}
\item[(A1)]  $\beta$, $\beta_\Gamma$ are maximal monotone graphs in $\mathbb{R} \times \mathbb{R}$, which
\pier{coincide with}
the subdifferentials $\beta =\partial \widehat{\beta}$, $\beta_\Gamma =\partial \widehat{\beta}_\Gamma$ of 
some proper, lower semicontinuous, and convex functions 
$\widehat{\beta}$, $\widehat{\beta}_\Gamma: \mathbb{R} \to [0,+\infty]$ \pier{such} that 
$\widehat{\beta}(0)=\widehat{\beta}_\Gamma(0)=0$, with the 
corresponding effective domains denoted by $D(\beta)$ and \pier{$D(\beta_\Gamma)$}, respectively; 
\item[(A2)] $D (\beta_\Gamma) \subseteq D(\beta)$ and there exist \pier{two constants 
$\varrho \geq 1$ and $c_0>0$} such that 
\begin{equation}
	\bigl| \beta^\circ(r) \bigr| \le \varrho\bigl| \beta ^\circ_\Gamma (r) \bigr|+c_0 \quad 
	\text{for all } r \in D(\beta_\Gamma),
	\label{cccond}
\end{equation}
\pier{where $\beta^\circ $ and $ \beta^\circ_\Gamma$ denote the minimal sections of $\beta $ and $\beta_\Gamma$, specified by 
$\beta^\circ(r):=\{ r^* \in \beta(r) : |r^*|=\min_{s \in \beta(r)} |s|\}$, $r\in D(\beta)$, for $\beta$;}
\item[(A3)] $\pi$, $\pi_\Gamma: \mathbb{R} \to \mathbb{R}$ are {L}ipschitz continuous functions with their {L}ipschitz constants $L$ and $L_\Gamma$, respectively;   
\item[(A4)] $u_0 \in V$, $u_{0\Gamma} \in V_\Gamma$ \pier{satisfy} 
$\widehat{\beta}(u_0) \in L^1(\Omega)$,
$\widehat{\beta}_\Gamma(u_{0\Gamma}) \in L^1(\Gamma)$,
 and $(u_0)_{|_\Gamma}=u_{0\Gamma}$ a.e.\ on $\Gamma$.  
Moreover\pier{, let}
\begin{equation*}
	m_0:=\frac{1}{|\Omega|} \int _{\Omega} u_0 dx \in \pier{\interior{}} D(\beta_\Gamma); 
\end{equation*}
\item[(A5)] $g \in L^2(0,T;H)$, $g_\Gamma \in L^2(0,T;H_\Gamma)$; \pier{in the case $\tau =0$ let} $g \in H^1(0,T;H)$ or $g \in L^2 (0,T;V)$.
\end{enumerate} 
As a remark, we \pier{deduce that $0 \in \beta(0)$ and $0 \in \beta_\Gamma(0)$ as consequences} 
from the assumption~(A1). \pier{Next, we set $|\Omega|:=\int_\Omega 1 dx$ 
and point out that the condition (\ref{cccond}) in (A2) is very useful in the treatment of two different potentials,
$\beta$ in the bulk $\Omega$ and $\beta_\Gamma$ on the boundary $\Gamma$ (to our knowledge, 
this condition has been used for the first time in \cite{CC13}). Moreover, let us note that in the case when $\beta$ and $\beta_\Gamma$ satisfy  (\ref{cccond}) for some $\takeshi{\varrho} \in (0,1)$, then (A2) also works for $\takeshi{\varrho}=1$.}

\pier{%
\subsection{Well-posedness for $\kappa\in (0,1]$}
The system (\ref{chd1})--(\ref{chd6}) and some extensions of 
it have been shown to be well posed \cite{CF15b, CGS14} provided that 
$\kappa>0$. More precisely, in view of the results proved in \cite{CF15b, CGS14} we can state the following proposition.
\smallskip
\begin{proposition} \label{prop} Under the assumptions {\rm (A1)--(A5)}, there exists a unique (weak) solution  
$(u_\kappa ,\mu_\kappa , \xi_\kappa , u_{\Gamma, \kappa }, \xi_{\Gamma, \kappa } )$ such that
\begin{align*}
	& u_\kappa  \in H^1(0,T;V^*)\cap L^\infty (0,T;V) \cap L^2\bigl( 0,T;H^2(\Omega) \bigr), \\
	& \tau u_\kappa  \in H^1(0,T;H) \cap C\bigl( [0,T]; V\bigr), \\
	& \mu_\kappa  \in L^2(0,T;V), \quad \xi \in L^2(0,T;H), \\
	& u_{\Gamma, \kappa } \in H^1(0,T;H_\Gamma )\cap C \bigl( [0,T];V_\Gamma \bigr) \cap L^2 \bigl( 0,T;H^2(\Gamma) \bigr), \\
	& \xi_{\Gamma, \kappa } \in L^2(0,T;H_\Gamma)
\end{align*}
and satisfying {\rm (\ref{chd1})--(\ref{chd6})}; if $\tau=0$, then equation~{\rm (\ref{chd1})} and boundary condition~{\rm (\ref{chd3})} make sense in terms of the variational formulation 
\begin{equation}
\label{pier11}
	\bigl \langle \partial_t u(t), z \bigr \rangle_{V^*,V} + \int_\Omega \nabla \mu(t)\cdot \nabla z dx =0
	\quad \text{ for all } z \in V, \hbox{ for a.a.\ $t \in (0,T)$}.   
\end{equation}
\end{proposition}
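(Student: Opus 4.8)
The statement is essentially a well-posedness result for fixed $\kappa > 0$, and since the nonlinearities $\beta$, $\beta_\Gamma$ are only maximal monotone graphs the natural route is a regularization-and-passage-to-the-limit scheme. First I would replace $\beta$ and $\beta_\Gamma$ by their Moreau--Yosida approximations $\beta_\lambda$, $\beta_{\Gamma,\lambda}$, which are Lipschitz continuous with primitives converging monotonically to $\widehat\beta$, $\widehat\beta_\Gamma$. For fixed $\lambda > 0$ (and the fixed $\kappa,\tau$) the regularized system has smooth nonlinear terms, and its solvability can be secured either through a Faedo--Galerkin scheme in a basis adapted to the bulk--surface coupling, or by recasting it as an abstract Cauchy problem governed by a Lipschitz perturbation of a subdifferential on the product Hilbert space of pairs $(v, v_\Gamma)$ subject to the trace constraint $v_\Gamma = v_{|_\Gamma}$. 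The mass conservation $\int_\Omega u(t)\,dx = \int_\Omega u_0\,dx$ should be handled by splitting off the constant mean value and working with the zero-mean part together with the Neumann inverse Laplacian.

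The backbone of the uniform estimates is the energy identity obtained by testing \eqref{chd2} by $\partial_t u$ over $\Omega$, adding \eqref{chd5} tested by $\partial_t u_\Gamma$ over $\Gamma$, and invoking \eqref{chd1} with $\partial_{\boldsymbol\nu}\mu = 0$ so that $\int_\Omega \mu\,\partial_t u = -\int_\Omega |\nabla\mu|^2$, while the trace identity $u_\Gamma = u_{|_\Gamma}$ cancels the flux contribution $\int_\Gamma \partial_{\boldsymbol\nu} u\,\partial_t u_\Gamma$. After absorbing the Lipschitz terms $\pi$, $\pi_\Gamma$ and the data $g$, $g_\Gamma$ by Young's inequality and Gronwall's lemma, this yields $\lambda$-uniform bounds for $\nabla\mu$ in $L^2(Q)$, for $\partial_t u_\Gamma$ in $L^2(\Sigma)$, for $\tau^{1/2}\partial_t u$ in $L^2(Q)$, for $\nabla u$ in $L^\infty(0,T;H)$, for $\kappa^{1/2}\nabla_\Gamma u_\Gamma$ in $L^\infty(0,T;H_\Gamma)$, and for the primitives in $L^\infty(0,T;L^1)$. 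To close the full norm of $\mu$ I would control its mean value: integrating \eqref{chd2} over $\Omega$ reduces this to bounding $\int_\Omega \xi_\lambda\,dx$, and it is here that assumption (A4), namely $m_0 \in \mathrm{int}\,D(\beta_\Gamma)$, together with the domination (A2) enters, via the classical monotonicity trick that produces an $L^1$-bound on $\xi_\lambda$ and $\xi_{\Gamma,\lambda}$ starting from $\int_\Omega \xi_\lambda(u - m_0)\,dx$ and its boundary counterpart.

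Next I would test the elliptic part of \eqref{chd2} by $\xi_\lambda = \beta_\lambda(u)$, using $\beta_\lambda' \geq 0$ and rewriting the boundary integral $\int_\Gamma \partial_{\boldsymbol\nu} u\,\beta_\lambda(u_\Gamma)$ through \eqref{chd5} and the monotonicity of $\beta_{\Gamma,\lambda}$; this delivers uniform $L^2(Q)$ and $L^2(\Sigma)$ bounds on $\xi_\lambda$ and $\xi_{\Gamma,\lambda}$, whence elliptic regularity for the Laplacian with the resulting Robin-type boundary datum upgrades $u$ to $L^2(0,T;H^2(\Omega))$, and, since $-\kappa\Delta_\Gamma u_\Gamma$ is now bounded in $L^2(\Sigma)$, upgrades $u_\Gamma$ to $L^2(0,T;H^2(\Gamma))$. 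With these bounds I would pass to the limit $\lambda \searrow 0$ by weak and weak-$*$ compactness together with the Aubin--Lions lemma for the strong convergence of $u$ and $u_\Gamma$, and then identify the inclusions $\xi \in \beta(u)$ and $\xi_\Gamma \in \beta_\Gamma(u_\Gamma)$ from the maximal monotonicity of the graphs via the standard $\limsup$ argument.

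Uniqueness would follow by taking the difference of two solutions with the same data: the difference of the $u$-components has zero mean, so the Neumann inverse-Laplacian norm is available; testing the difference of the mass balance by this inverse Laplacian and the difference of \eqref{chd5} appropriately, and combining the monotonicity of $\beta$, $\beta_\Gamma$ with the Lipschitz bounds on $\pi$, $\pi_\Gamma$, produces a Gronwall inequality. I expect the main obstacle to be the bulk--surface transmission coupling, since every estimate must simultaneously control the interior chemical-potential dynamics and the boundary Allen--Cahn dynamics; the genuinely delicate point is the interplay of the two distinct graphs, where it is precisely the domination condition (A2) and the interiority condition (A4) that render the mean-value and the $L^2$ estimates on $\xi$, $\xi_\Gamma$ feasible and keep the boundary singular term dominated by the bulk one.
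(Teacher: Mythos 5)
Your strategy is essentially the paper's own (Yosida regularization, mean-value splitting with the inverse Neumann Laplacian, the energy estimate, the $L^1$ bounds on the graphs via $m_0\in\interior D(\beta_\Gamma)$ and the trick of \cite{GMS09}, the test by $\beta_\lambda(u)$, elliptic regularity, compactness plus maximal monotonicity in the limit), but one step fails as written. You claim that testing \eqref{chd2} by $\beta_\lambda(u)$, after substituting \eqref{chd5} into the boundary term, ``delivers uniform $L^2(Q)$ and $L^2(\Sigma)$ bounds on $\xi_\lambda$ \emph{and} $\xi_{\Gamma,\lambda}$.'' It does not give the bound on $\xi_{\Gamma,\lambda}$. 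The substitution produces the boundary term $\int_\Gamma\beta_{\Gamma,\lambda}(u_\Gamma)\,\beta_\lambda(u_\Gamma)\,d\Gamma$, and assumption (A2), i.e.\ $|\beta^\circ(r)|\le\varrho|\beta_\Gamma^\circ(r)|+c_0$ (the \emph{boundary} graph dominates the bulk one), bounds this product from below by $\frac{1}{2\varrho}|\beta_\lambda(u_\Gamma)|_{H_\Gamma}^2-\frac{c_0^2}{2\varrho}|\Gamma|$. Hence what you control on $\Sigma$ is $\beta_\lambda(u_\Gamma)$, the \emph{bulk} graph evaluated at the trace (this is exactly Lemma~3.5 of the paper), not $\xi_{\Gamma,\lambda}=\beta_{\Gamma,\lambda}(u_\Gamma)$. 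Under (A2) alone no bound on $\beta_{\Gamma,\lambda}(u_\Gamma)$ can follow from this test: take $\beta\equiv0$ and $\beta_\Gamma$ arbitrary; (A1)--(A2) hold, $\beta_\lambda\equiv0$, the product term vanishes identically, and the test carries no information whatsoever on $\beta_{\Gamma,\lambda}(u_\Gamma)$. What your deduction silently uses is the reverse inequality $|\beta_\Gamma^\circ(r)|\le\varrho|\beta^\circ(r)|+\varrho c_0$, which is precisely assumption (A2)$'$ of Section~5, strictly stronger than (A2); if your step were valid under (A2), Theorem~\ref{regularity} and the hypothesis \eqref{cccond2} would be superfluous.

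The missing argument (the paper's Lemmas~3.6 and~3.7) runs in the opposite order to yours. First, comparison in \eqref{chd2} gives $\Delta u$ bounded in $L^2(Q)$; then the \emph{Dirichlet} elliptic estimate of \cite[Theorem~3.2, p.~1.79]{BG87}, with trace datum $u_{\Gamma}$ bounded in $L^2(0,T;V_\Gamma)$ (a bound that comes from the energy estimate and costs a factor $\kappa^{-1/2}$), gives $u\in L^2(0,T;H^{3/2}(\Omega))$ and $\partial_{\boldsymbol{\nu}}u$ bounded in $L^2(0,T;H_\Gamma)$, again with a factor $\kappa^{-1/2}$. Only at this point can one test the boundary equation \eqref{chd5} by $\beta_{\Gamma,\lambda}(u_\Gamma)$: the surface diffusion contributes $\kappa\int_\Gamma\beta_{\Gamma,\lambda}'(u_\Gamma)|\nabla_\Gamma u_\Gamma|^2\,d\Gamma\ge0$, and Young's inequality applied to the $\partial_{\boldsymbol{\nu}}u$ term yields the $L^2(\Sigma)$ bound on $\xi_{\Gamma,\lambda}$, once more with a factor $\kappa^{-1/2}$. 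For Proposition~\ref{prop} this is harmless, since $\kappa$ is fixed; but it is the reason why these bounds are lost as $\kappa\searrow0$ and the limit problem only retains $\xi_\Gamma\in L^2(0,T;W_\Gamma^*)$ --- the very point of the paper. Accordingly, your regularity chain must also be inverted: the ``Robin-type'' bulk estimate cannot come first, because the boundary datum in \eqref{chd5} still contains $\Delta_\Gamma u_\Gamma$, which is not yet in $L^2$; one obtains $\kappa\Delta_\Gamma u_{\Gamma}\in L^2(\Sigma)$ by comparison only after the $\xi_{\Gamma,\lambda}$ bound, hence $u_{\Gamma}\in L^2(0,T;H^2(\Gamma))$, and finally $u\in L^2(0,T;H^2(\Omega))$ from the Dirichlet problem with $H^{3/2}(\Gamma)$ data. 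The remaining parts of your proposal (energy estimate, including the needed care for $\tau=0$ via (A5), mean-value and $L^1$ bounds, limit passage, uniqueness via $\mathcal{N}$ and Gronwall) match the paper and are sound.
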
%
}

\pier{%
\subsection{Asymptotic analysis as $\kappa\searrow 0$}
We aim to discuss the asymptotic analysis of 
\pier{the system~(\ref{chd1})--(\ref{chd6})} as the coefficient $\kappa $ of the term 
with the Laplace--Beltrami operator in \eqref{chd5} tends to $0$.}
Then, we will obtain \pier{the singular limit problem, with the equality in (\ref{chd5})  replaced by 
\begin{equation}
	\partial_t u_\Gamma+ \partial_{\boldsymbol{\nu}} u 
	+ \xi _\Gamma + \pi_\Gamma(u_\Gamma)= g_\Gamma ,
	\label{pier1}
\end{equation}
meant in some dual space, and the characterization of the inclusion in (\ref{chd5}), i.e., 
\begin{equation}
	\xi_\Gamma \in \beta_\Gamma(u_\Gamma) \quad \text{a.e.\ on }\Sigma,
	\label{relation}
\end{equation}
in a weaker form as well.}
For this purpose, 
we \pier{introduce the maximal monotone operator 
$\beta_{\Gamma, W_\Gamma^*}:W_\Gamma \to 2^{W_\Gamma^*}$, 
which is the subdifferential of the functional
\pier{$\widehat{\beta}_{\Gamma, W_\Gamma}:W_\Gamma \to [0,+\infty]$ defined below (cf., e.g.,} \cite[Section~5]{CC13}). In fact, we set}
\begin{align*}
	& \widehat{\beta}_{\Gamma,H_\Gamma}(z_\Gamma):=
	\begin{cases}
	\displaystyle \int_\Gamma \widehat{\beta}_\Gamma(z_\Gamma) 
	& \text{if } z \in H_\Gamma \text{ and } \widehat{\beta}_\Gamma(z_\Gamma) \in L^1(\Gamma), \\
	\pier{{}+\infty} & \text{if } z \in H_\Gamma \text{ and } \widehat{\beta}_\Gamma(z_\Gamma) \notin L^1(\Gamma), 
	\end{cases}
\end{align*}
\pier{and, subsequently, 	
\begin{equation*}
	\widehat{\beta}_{\Gamma,W_\Gamma}(z_\Gamma):=\widehat{\beta}_{\Gamma,H_\Gamma}(z_\Gamma) 
	\quad  \text{if } \pier{z_\Gamma} \in W_\Gamma. 
\end{equation*}
We notice that both functionals $\widehat{\beta}_{\Gamma,H_\Gamma}$ and $ \widehat{\beta}_{\Gamma,W_\Gamma} $ 
are proper (equal to $0$ for $ z_\Gamma=0$),} lower semicontinous, and convex on $H_\Gamma$ and $W_\Gamma$, respectively. Now, \pier{if we set 
\begin{equation*}
	\beta_{\Gamma,H_\Gamma}
	:=\partial \widehat{\beta}_{\Gamma,H_\Gamma}:H_\Gamma \to 2^{H_\Gamma},
\end{equation*}
then we have that
$z_\Gamma^* \in \beta_{\Gamma,\takeshi{H_\Gamma}}(z_\Gamma)$ in $H_\Gamma$ if and only if 
$z_\Gamma^* \in H_\Gamma$, $z_\Gamma \in D(\widehat{\beta}_{\Gamma,H_\Gamma})$, and
\begin{gather*}
	( z_\Gamma^*, \tilde{z}_\Gamma- z_\Gamma )_{H_\Gamma} \le 
	\widehat{\beta}_{\Gamma,H_\Gamma}(\tilde{z}_\Gamma)-
	\widehat{\beta}_{\Gamma,H_\Gamma}(z_\Gamma)
	\quad \text{for all } \tilde{z}_\Gamma \in H_\Gamma.
\end{gather*}
We emphasize that $\beta_{\Gamma,H_\Gamma}$ is nothing but the operator induced by $\beta_\Gamma$ on $H_\Gamma$,
so that (see, e.g., \cite{Bre73}) the inclusion (\ref{relation}) can be equivalently rewritten as 
\begin{equation*}
	\xi_\Gamma (t)  \in \beta_{\Gamma,H_\Gamma} \takeshi{\bigl (}u_\Gamma (t) \takeshi{\bigr)} 
	\quad \text{in } H_\Gamma, \ \hbox{ for a.a. } t \in (0,T).
\end{equation*}
On the other hand, for
\begin{equation*}
	\beta_{\Gamma,W_\Gamma^*}
	:=\partial \widehat{\beta}_{\Gamma,W_\Gamma}:W_\Gamma \to 2^{W_\Gamma^*},
\end{equation*}
we remark that
$z_\Gamma^* \in \beta_{\Gamma,W_\Gamma^*}(z_\Gamma)$ in $W_\Gamma^*$ if and only if 
$z_\Gamma^* \in W_\Gamma^*$, $z_\Gamma \in D(\widehat{\beta}_{\Gamma,W_\Gamma})$, and
\begin{gather*}
	\langle z_\Gamma^*, \tilde{z}_\Gamma- z_\Gamma \rangle_{W_\Gamma^*, W_\Gamma} \le 
	\widehat{\beta}_{\Gamma,W_\Gamma}(\tilde{z}_\Gamma)-
	\widehat{\beta}_{\Gamma,W_\Gamma}(z_\Gamma)
	\quad \text{for all } \tilde{z}_\Gamma \in W_\Gamma.
\end{gather*}
Hence, it is clear that 
%
$	z^*_\Gamma \in \beta_{\Gamma,H_\Gamma} (z_\Gamma ) 
	\  \text{in } H_\Gamma $
%
entails 
$	z_\Gamma^* \in \beta_{\Gamma,W_\Gamma^*} 
	(z_\Gamma) \  \text{in } W_\Gamma^*$
%
whenever $z_\Gamma^* \in H_\Gamma \subset W_\Gamma^*$ and 
$z_\Gamma \in D(\widehat{\beta}_{\Gamma,W_\Gamma}) \subset W_\Gamma \subset H_\Gamma$. Then 
a possible extension of \eqref{relation} is
\begin{equation*}
	\xi_\Gamma (t)  \in \beta_{\Gamma,W^*_\Gamma} \takeshi{ \bigl(}u_\Gamma (t) \takeshi{\bigr )}
	\quad \text{in } W^*_\Gamma, \ \hbox{ for a.a. } t \in (0,T),
	\label{pier2}
\end{equation*}
in the case when $\xi_\Gamma (t) \not\in H_\Gamma $ for all $t$ of a set which is not negligible.}

\pier{%
\subsection{Convergence-existence theorem}
Our main result is stated here.}
\begin{theorem}
\label{convergence}
 Let $\tau \ge 0$ \pier{and assume that {\rm (A1)--(A5)} hold. For all $\kappa\in (0,1]$
let 
$(u_\kappa ,\mu_\kappa , \xi_\kappa , u_{\Gamma, \kappa }, \xi_{\Gamma, \kappa } )$ denote the solution to 
\eqref{chd1}--\eqref{chd6} defined by Proposition~\ref{prop}. Then
exists  a quintuplet $(u,\mu, \xi, u_\Gamma, \xi_\Gamma )$ such that
\begin{align}
	u_\kappa \to u 
	& \quad \text{weakly star in } H^1(0,T;V^*) \cap L^\infty(0,T;V), 
	\nonumber \\
	& \quad \text{and strongly in } C\bigl( [0,T];H\bigr), 
	\label{u1}\\
	\tau u_\kappa \to \tau u 
	& \quad \text{weakly in } H^1(0,T;H), 
	\label{pier4}\\
		\mu_\kappa \to \mu 
	& \quad \text{weakly in } L^2(0,T;V), 
	\label{mu1}\\
	\xi_\kappa \to \xi 
	& \quad \text{weakly in } L^2(0,T;H), 
	\label{beta1}\\
	\Delta u_\kappa \to \Delta u
	& \quad \text{weakly in } L^2(0,T;H), 
	\label{Lap1}\\
	u_{\Gamma,\kappa} \to u_{\Gamma}
	&  \quad \text{weakly star in } H^1(0,T;H_\Gamma) \cap L^\infty ( 0,T;W_\Gamma), 
	\nonumber \\
	& \quad \text{and strongly in } C\bigl( [0,T];H_\Gamma \bigr), 
	\label{ug1}\\
	\partial_{\boldsymbol{\nu}} u_\kappa \to \partial_{\boldsymbol{\nu}} u 
	\label{nu1}
	& \quad \text{weakly in } L^2 (0,T;W_\Gamma^*), \\
	\xi_{\Gamma,\kappa} \to \xi_\Gamma
	&  \quad \text{weakly in } L^2 (0,T;W_\Gamma^*) 
	\label{xig1}
\end{align}
as $\kappa \searrow 0$. \takeshi{Moreover}, the limit functions  $ u,\mu, \xi, u_\Gamma, \xi_\Gamma  $ satisfy}
%
\begin{gather}
	\bigl \langle \partial_t u(t), z \bigr \rangle_{V^*,V} 
	+ \int_\Omega \nabla \mu(t)\cdot \nabla z dx =0
	\quad \text{ for all } z \in V, 
	\  \text{for a.a.\ } t \in (0,T),
	\label{pier5}\\
	\mu = \tau \partial_t u -\Delta u 
	+ \xi + \pi (u)-g, \quad \xi \in \beta (u)
	\quad \text{a.e.\ in }Q, 
	\label{pier6}\\[2mm]
	u_{|_{\Gamma}} =u_\Gamma 
	\quad  \text{a.e.\ on }\Sigma,
	\label{pier7}\\
	\int_\Gamma \partial_t u_\Gamma(t) z_\Gamma d\Gamma 
	+ \bigl \langle \partial_{\boldsymbol{\nu}} u(t), z_\Gamma \bigr \rangle_{W_\Gamma^*, W_\Gamma} 
	+  \bigl \langle \xi_\Gamma (t), z_\Gamma \bigr \rangle_{W_\Gamma^*, W_\Gamma} 
	+ \int_\Gamma \pi_\Gamma \bigl(u_\Gamma (t) \bigr) z_\Gamma d\Gamma 
	\nonumber \\
	{}= \int_\Gamma g_\Gamma (t) z_\Gamma d \Gamma 
	\quad \text{ for all } z_\Gamma \in \pier{W_\Gamma}, 
	\  \text{for a.a.\ } t \in (0,T), 
	\label{pier8}\\
	\xi_\Gamma (t) \in \beta_{\Gamma,W_\Gamma^*} \bigl( u_\Gamma (t)\bigr) 
	\quad \text{in }W_\Gamma ^*, \ \,\text{for a.a.\ } t \in (0,T), \label{pier9}\\
	u(0)=u_0 \quad \text{a.e.\ in } \Omega, 
	\quad u_\Gamma(0)=u_{0\Gamma} \quad \text{a.e.\ on }\Gamma. \label{pier10}
\end{gather}
\end{theorem}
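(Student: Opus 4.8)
The plan is to obtain the convergence in Theorem~\ref{convergence} by deriving $\kappa$-uniform estimates on the solution family $(u_\kappa ,\mu_\kappa , \xi_\kappa , u_{\Gamma, \kappa }, \xi_{\Gamma, \kappa } )$ and then passing to the limit. First I would establish the \emph{energy estimate}: test \eqref{chd1} by $\mu_\kappa$, use \eqref{chd2} tested by $\partial_t u_\kappa$ together with the boundary contribution from \eqref{chd5} tested by $\partial_t u_{\Gamma,\kappa}$, and sum. The key cancellation is that the term $\int_\Omega \nabla\mu_\kappa\cdot\nabla(\partial_t u_\kappa)$ arising from the bulk and the normal-derivative coupling $\partial_{\boldsymbol\nu} u_\kappa$ match up through the integration by parts that ties together \eqref{chd2}, \eqref{chd4} and \eqref{chd5}. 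This yields, after Gronwall, bounds
\begin{equation*}
	\| u_\kappa \|_{L^\infty(0,T;V)} + \tau^{1/2}\| \partial_t u_\kappa \|_{L^2(0,T;H)} + \| u_{\Gamma,\kappa} \|_{L^\infty(0,T;H_\Gamma)} + \kappa^{1/2}\| \nabla_\Gamma u_{\Gamma,\kappa}\|_{L^\infty(0,T;H_\Gamma)} \le C,
\end{equation*}
uniformly in $\kappa$, using (A4), (A5) and the Lipschitz continuity of $\pi,\pi_\Gamma$ from (A3). The crucial point is that the surface-diffusion term contributes $\tfrac{\kappa}{2}\tfrac{d}{dt}|\nabla_\Gamma u_{\Gamma,\kappa}|_{H_\Gamma}^2 \ge 0$ and so is controlled, but only with a weight $\kappa$ that degenerates in the limit — which is precisely why $u_\Gamma$ will live only in $L^\infty(0,T;W_\Gamma)$ and not in $L^\infty(0,T;V_\Gamma)$, matching \eqref{ug1}.

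Next I would obtain the remaining estimates. A bound on $\partial_t u_\kappa$ in $L^2(0,T;V^*)$ follows from \eqref{chd1} and the $L^2(0,T;V)$ bound on $\mu_\kappa$; the latter is obtained by estimating the mean value of $\mu_\kappa$ (using the mass conservation and assumption~(A4) that $m_0 \in \interior D(\beta_\Gamma)$, which prevents $\beta$ and $\beta_\Gamma$ from blowing up and lets one control $\int_\Omega \xi_\kappa$ and $\int_\Gamma \xi_{\Gamma,\kappa}$ via a standard Miranville--Zelik-type argument) combined with the Poincaré--Wirtinger inequality applied to $\mu_\kappa - \overline{\mu_\kappa}$. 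Testing \eqref{chd2} by $\xi_\kappa$ and using (A2) to dominate $\xi_\kappa$ by $\xi_{\Gamma,\kappa}$ up to lower-order terms gives the $L^2(0,T;H)$ bounds on $\xi_\kappa$ and on $\xi_{\Gamma,\kappa}$ in the appropriate sense. By comparison in \eqref{chd2} one then bounds $\Delta u_\kappa$ in $L^2(0,T;H)$, giving \eqref{Lap1}, and $\partial_t u_{\Gamma,\kappa}$ in $L^2(0,T;H_\Gamma)$ from \eqref{chd5}. With all these estimates in hand, Banach--Alaoglu and the Aubin--Lions--Simon compactness lemma yield the weak-star and strong convergences \eqref{u1}--\eqref{xig1} along a subsequence; the strong convergence in $C([0,T];H)$ and $C([0,T];H_\Gamma)$ is what permits identification of the nonlinear limits.

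The main obstacle will be the identification of the limiting nonlinearities, especially \eqref{pier9}. For the bulk graph the strong convergence $u_\kappa \to u$ in $C([0,T];H)$ together with the weak convergence $\xi_\kappa \rightharpoonup \xi$ in $L^2(0,T;H)$ lets me pass to the limit in $\xi_\kappa \in \beta(u_\kappa)$ by the standard maximal-monotonicity/upper-semicontinuity argument (using $\limsup \int_Q \xi_\kappa u_\kappa \le \int_Q \xi u$), giving \eqref{pier6}. The delicate part is the boundary inclusion: since the surface diffusion disappears, $\xi_{\Gamma,\kappa}$ only converges weakly in $L^2(0,T;W_\Gamma^*)$ (see \eqref{xig1}), so $\xi_\Gamma$ is an element of the dual space and the pointwise relation $\xi_\Gamma \in \beta_\Gamma(u_\Gamma)$ must be reinterpreted via the subdifferential $\beta_{\Gamma,W_\Gamma^*} = \partial\widehat{\beta}_{\Gamma,W_\Gamma}$ introduced before the theorem. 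To establish \eqref{pier9} I would exploit the weak lower semicontinuity of the convex functional $\widehat{\beta}_{\Gamma,W_\Gamma}$ along the strongly converging $u_{\Gamma,\kappa}\to u_\Gamma$ in $C([0,T];H_\Gamma)$, pass to the limit in the variational inequality defining $\xi_{\Gamma,\kappa}\in\partial\widehat{\beta}_{\Gamma,W_\Gamma}(u_{\Gamma,\kappa})$, and recover the inequality characterizing $\beta_{\Gamma,W_\Gamma^*}$ in the dual pairing $\langle\cdot,\cdot\rangle_{W_\Gamma^*,W_\Gamma}$. The variational form \eqref{pier8} of the boundary equation is then obtained by integrating \eqref{chd5} against test functions in $W_\Gamma$ and letting $\kappa\searrow 0$, noting that $\kappa\int_\Sigma \nabla_\Gamma u_{\Gamma,\kappa}\cdot\nabla_\Gamma z_\Gamma \to 0$ because the gradient is bounded only with the $\kappa^{1/2}$ weight; the normal-derivative term $\partial_{\boldsymbol\nu} u_\kappa$ converges weakly in $L^2(0,T;W_\Gamma^*)$ as in \eqref{nu1}, consistently with $\Delta u_\kappa$ being bounded in $L^2(0,T;H)$ and a trace/normal-trace estimate. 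Finally the variational equation \eqref{pier5} and the initial conditions \eqref{pier10} pass to the limit directly from the strong convergences.
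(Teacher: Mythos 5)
Your overall architecture (uniform estimates, compactness, identification of nonlinearities) matches the paper's, and the bulk part of your argument is sound; but there is a genuine gap at the heart of the proof, namely the identification of the boundary inclusion \eqref{pier9}. You propose to ``pass to the limit in the variational inequality defining $\xi_{\Gamma,\kappa}\in\partial\widehat{\beta}_{\Gamma,W_\Gamma}(u_{\Gamma,\kappa})$'' using weak lower semicontinuity, but this inequality contains the duality pairing $\int_0^T\langle \xi_{\Gamma,\kappa},u_{\Gamma,\kappa}\rangle_{W_\Gamma^*,W_\Gamma}\,dt$, which is a product of two merely \emph{weakly} convergent sequences: $\xi_{\Gamma,\kappa}$ converges only weakly in $L^2(0,T;W_\Gamma^*)$, while $u_{\Gamma,\kappa}$ converges strongly only in $C([0,T];H_\Gamma)$ and weakly star in $L^\infty(0,T;W_\Gamma)$ --- there is no strong convergence in $L^2(0,T;W_\Gamma)$ available (interpolation of the known convergences never reaches the $H^{1/2}(\Gamma)$ level). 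Unlike the bulk case, where $\xi_\kappa\rightharpoonup\xi$ and $u_\kappa\to u$ live in the same pivot space $H$, this weak--weak pairing does not pass to the limit by the stated convergences. The paper's proof devotes its core precisely to establishing the missing inequality $\limsup_{\kappa\searrow 0}\int_0^T\langle \xi_{\Gamma,\kappa},u_{\Gamma,\kappa}\rangle_{W_\Gamma^*,W_\Gamma}\,dt\le\int_0^T\langle \xi_{\Gamma},u_{\Gamma}\rangle_{W_\Gamma^*,W_\Gamma}\,dt$: one takes $z_\Gamma=u_{\Gamma,\kappa}$ in the weak boundary equation, rewrites $-\int_0^T\langle\partial_{\boldsymbol{\nu}}u_\kappa,u_{\Gamma,\kappa}\rangle\,dt$ via Green's formula as $-\int_0^T\!\!\int_\Omega \Delta u_\kappa\,u_\kappa\,dx\,dt-\int_0^T\!\!\int_\Omega|\nabla u_\kappa|^2dx\,dt$ (so that weak convergence of $\Delta u_\kappa$ pairs with the \emph{strong} convergence of $u_\kappa$ in $C([0,T];H)$, and the Dirichlet integral is handled by weak lower semicontinuity), discards $-\kappa\int_\Sigma|\nabla_\Gamma u_{\Gamma,\kappa}|^2\le 0$, and identifies the resulting limit with $\int_0^T\langle\xi_\Gamma,u_\Gamma\rangle\,dt$ through the already-established limit equation \eqref{pier8}. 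Without this device (or an equivalent one), your identification step does not close.

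A second, related gap: you never establish the uniform bound of $\xi_{\Gamma,\kappa}$ in $L^2(0,T;W_\Gamma^*)$, which is needed even to extract the weak limit in \eqref{xig1}. Testing \eqref{chd2} by $\xi_\kappa$ together with (A2) controls the \emph{trace} of $\xi_\kappa$, i.e.\ $\beta(u_{\Gamma,\kappa})$, in $L^2(0,T;H_\Gamma)$ --- not $\xi_{\Gamma,\kappa}$ itself, for which an $H_\Gamma$ bound holds only with a $\sqrt{\kappa}$ weight. The paper gets the $W_\Gamma^*$ bound by comparison in the boundary equation \emph{after} showing that $\kappa\Delta_\Gamma u_{\Gamma,\kappa}$ is bounded in $L^2(0,T;W_\Gamma^*)$, which requires the weighted estimates $\kappa^{3/2}|\Delta_\Gamma u_{\Gamma,\kappa}|_{H_\Gamma}\le\Lambda_8$, $\sqrt{\kappa}|\Delta_\Gamma u_{\Gamma,\kappa}|_{V_\Gamma^*}\le\Lambda_9$ and the interpolation $|\kappa\Delta_\Gamma u_{\Gamma,\kappa}|_{W_\Gamma^*}\le C\,|\kappa^{3/2}\Delta_\Gamma u_{\Gamma,\kappa}|_{H_\Gamma}^{1/2}\,|\sqrt{\kappa}\Delta_\Gamma u_{\Gamma,\kappa}|_{V_\Gamma^*}^{1/2}$ (Lemma~3.7 of the paper). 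Relatedly, your claim that $\partial_t u_{\Gamma,\kappa}$ is bounded in $L^2(0,T;H_\Gamma)$ ``by comparison in \eqref{chd5}'' is backwards: the remaining terms of \eqref{chd5} are \emph{not} uniformly bounded in $H_\Gamma$, and this bound in fact comes directly from your own energy estimate (testing \eqref{chd5} by $\partial_t u_{\Gamma,\kappa}$), after which comparison arguments can proceed. Finally, note that the rigorous derivation of estimates such as the one obtained by testing with $\xi_\kappa$ (where one needs $\int_\Omega\beta'(u)|\nabla u|^2\ge 0$ and a well-defined trace of $\xi_\kappa$) is carried out in the paper at the level of the Yosida-regularized system, with the bounds transferred to the $\kappa$-solutions by weak lower semicontinuity; working directly with the multivalued system as you do leaves these manipulations formal.
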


\smallskip

\pier{The above result is simultaneously a {\it convergence} theorem of the solutions to 
(\ref{chd1})--(\ref{chd6}) toward the solution of the limiting problem, which then 
{\it exists} and  turns out to be unique, as the next result will confirm.}

\pier{Note that the equation \eqref{pier1} on the boundary is expressed in the weak form
\eqref{pier8}. We remark that the 
regularity of the solution to \eqref{pier5}--\eqref{pier10} is not enough to conclude that 
$ \xi_\Gamma$ belong to $ L^2 (0,T;H_\Gamma)$
(and in this case \eqref{pier1} may hold a.e.~on $\Sigma$), although $\Delta u \in L^2(0,T;H)$.
Indeed, if we recall the elliptic regularity theorem \cite[Theorem~3.2, p.~1.79]{BG87}
for $u(t) \in V$ solving 
\begin{equation*}
\begin{cases}
	-\Delta u (t)= \tilde g  (t)\quad \text{a.e.\ in } \Omega, 
	\\
	u_{|_\Gamma} (t) =u_\Gamma (t) \quad \text{a.e.\ on } \Gamma,
\end{cases}
\end{equation*}
for a.a. $t\in (0,T)$, where $\tilde g=\mu-\tau \partial_t u -\xi -\pi(u)+g \in L^2(0,T;H) $ and 
$u_{\Gamma} \in L^2(0,T;W_\Gamma) $, we can just infer that $ \partial_{\boldsymbol{\nu}} u \in
L^2 (0,T;W_\Gamma^*)$. In this regard, we announce that an improvement of this theorem is 
given in Section~5, under a special assumption on the graphs $\beta $ and $\beta_\Gamma$.}
\smallskip

\pier{%
\subsection{Continuous dependence on data}
For the solutions to the problem~\eqref{pier5}--\eqref{pier10} we can 
prove a continuous dependence result, which in particular implies uniqueness.}

\begin{theorem} 
\label{contdep}
Take two sets of data $u_{0,i}, u_{0\Gamma,i}, g_i, g_{\Gamma,i}$ for $i=1,2$ satisfying {\rm (A4)--(A5)} \pier{and
\begin{equation}
	\frac{1}{|\Omega|} \int_\Omega u_{0,1} dx = \frac{1}{|\Omega|} \int_\Omega u_{0,2} dx =m_0.
	\label{media}
\end{equation}
Let $(u_i,\mu_i, \xi_i, u_{\Gamma,i},  \xi_{\Gamma,i})$, $i=1,2$, denote the corresponding solutions to \eqref{pier5}--\eqref{pier10}.}
Then there exists a positive constant $C$, depending only on $L$, $L_\Gamma$, and $T$, such that 
\begin{align}
	& |u_1-u_2|_{C([0,T;V^*)}^2 
	+ \tau |u_1-u_2|_{C([0,T];H)}^2
	+ |u_1-u_2|_{L^2(0,T;V)}^2 
	+ |u_{\Gamma,1}-u_{\Gamma,2}|_{C([0,T];H_\Gamma)}^2
	\nonumber \\
	& \le C \Bigl\{ 
	 |u_{0,1}-u_{0,2}|_{V^*}^2 
	+ \tau |u_{0,1}-u_{0,2}|_{H}^2
	+ |u_{0\Gamma,1}-u_{0\Gamma,2}|_{H_\Gamma}^2
	\nonumber \\
	& \quad {} + 
	|g_1-g_2|_{L^2(0,T;H)}^2 
	+ 
	|g_{\Gamma,1}-g_{\Gamma,2}|_{L^2(0,T;H_\Gamma)}^2 
	\Bigr\}.
	\label{dep}
\end{align}
\end{theorem}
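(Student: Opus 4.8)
The plan is to use the energy method on the difference of the two solutions, testing with the inverse of the Neumann--Laplace operator and exploiting the mass conservation enforced by \eqref{media}. Write $u:=u_1-u_2$, $\mu:=\mu_1-\mu_2$, $\xi:=\xi_1-\xi_2$, $u_\Gamma:=u_{\Gamma,1}-u_{\Gamma,2}$, $\xi_\Gamma:=\xi_{\Gamma,1}-\xi_{\Gamma,2}$, and likewise $g$, $g_\Gamma$, $u_0$, $u_{0\Gamma}$ for the data. Testing \eqref{pier5} with the constant $z=1$ shows $\frac{d}{dt}\int_\Omega u\,dx=0$, so by \eqref{media} the difference $u(t)$ has zero mean for every $t$. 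Hence I may introduce the operator $\mathcal{N}$ assigning to each zero-mean $f\in V^*$ the unique zero-mean solution $\mathcal{N}f\in V$ of $\int_\Omega \nabla\mathcal{N}f\cdot\nabla z\,dx=\langle f,z\rangle_{V^*,V}$ for all $z\in V$; then $|f|_*^2:=\int_\Omega|\nabla\mathcal{N}f|^2\,dx$ is a norm on the zero-mean subspace, equivalent to the $V^*$-norm, and $\langle\partial_t u(t),\mathcal{N}u(t)\rangle_{V^*,V}=\tfrac12\frac{d}{dt}|u(t)|_*^2$ by the symmetry of $\mathcal{N}$.

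First I would test the difference of \eqref{pier5} with $z=\mathcal{N}u(t)$, obtaining $\tfrac12\frac{d}{dt}|u|_*^2+(u,\mu)_H=0$, where the defining property of $\mathcal{N}$ (tested against $\mu(t)\in V$) turns the diffusion term into $(u,\mu)_H$. Next I would substitute the difference of \eqref{pier6}, namely $\mu=\tau\partial_t u-\Delta u+\xi+(\pi(u_1)-\pi(u_2))-g$, and apply the generalized Green's formula, legitimate because $u(t)\in V$ and $\Delta u(t)\in H$, to rewrite $(u,-\Delta u)_H=|\nabla u|_H^2-\langle\partial_{\boldsymbol{\nu}}u,u_\Gamma\rangle_{W_\Gamma^*,W_\Gamma}$. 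The boundary pairing is then eliminated by testing the difference of \eqref{pier8} with $z_\Gamma=u_\Gamma(t)\in W_\Gamma$, which yields $\langle\partial_{\boldsymbol{\nu}}u,u_\Gamma\rangle_{W_\Gamma^*,W_\Gamma}=-\tfrac12\frac{d}{dt}|u_\Gamma|_{H_\Gamma}^2-\langle\xi_\Gamma,u_\Gamma\rangle_{W_\Gamma^*,W_\Gamma}-(\pi_\Gamma(u_{\Gamma,1})-\pi_\Gamma(u_{\Gamma,2}),u_\Gamma)_{H_\Gamma}+(g_\Gamma,u_\Gamma)_{H_\Gamma}$. Collecting everything produces the identity
\begin{align*}
&\frac12\frac{d}{dt}\Bigl(|u|_*^2+\tau|u|_H^2+|u_\Gamma|_{H_\Gamma}^2\Bigr)+|\nabla u|_H^2+(u,\xi)_H+\langle\xi_\Gamma,u_\Gamma\rangle_{W_\Gamma^*,W_\Gamma}\\
&\quad= -\bigl(\pi(u_1)-\pi(u_2),u\bigr)_H-\bigl(\pi_\Gamma(u_{\Gamma,1})-\pi_\Gamma(u_{\Gamma,2}),u_\Gamma\bigr)_{H_\Gamma}+(g,u)_H+(g_\Gamma,u_\Gamma)_{H_\Gamma}.
\end{align*}
The two monotone contributions are nonnegative: $(u,\xi)_H=\int_\Omega(u_1-u_2)(\xi_1-\xi_2)\,dx\ge0$ since $\xi_i\in\beta(u_i)$ a.e., and $\langle\xi_\Gamma,u_\Gamma\rangle_{W_\Gamma^*,W_\Gamma}\ge0$ because $\xi_{\Gamma,i}(t)\in\beta_{\Gamma,W_\Gamma^*}(u_{\Gamma,i}(t))$ and $\beta_{\Gamma,W_\Gamma^*}=\partial\widehat{\beta}_{\Gamma,W_\Gamma}$ is monotone in the $W_\Gamma^*$--$W_\Gamma$ duality; I therefore discard them. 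On the right-hand side the Lipschitz bounds (A3) give $|(\pi(u_1)-\pi(u_2),u)_H|\le L|u|_H^2$ and the analogous $L_\Gamma|u_\Gamma|_{H_\Gamma}^2$, while the source terms are controlled by Young's inequality.

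The genuinely delicate point — and the main obstacle — is that the dissipation above controls $|\nabla u|_H^2$ but not the full $V$-norm, whereas the Lipschitz and source terms carry $|u|_H^2$. This is resolved through the zero-mean property via the interpolation inequality $|u|_H^2\le\delta|\nabla u|_H^2+C_\delta|u|_*^2$, which follows from $|u|_H^2=\langle u,u\rangle_{V^*,V}\le|u|_{V^*}|u|_V$, Young's inequality, and the equivalence of $|\cdot|_*$ with $|\cdot|_{V^*}$ on zero-mean functions. Choosing $\delta$ small enough to absorb $\tfrac12|\nabla u|_H^2$ into the dissipation, I arrive at $\frac{d}{dt}E(t)+|\nabla u|_H^2\le C\,E(t)+|g|_H^2+|g_\Gamma|_{H_\Gamma}^2$ with $E:=|u|_*^2+\tau|u|_H^2+|u_\Gamma|_{H_\Gamma}^2$. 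A Gronwall argument bounds $\sup_{[0,T]}E$ and, after reinserting the interpolation estimate, also $\int_0^T|u|_V^2$, in terms of $E(0)\sim|u_0|_{V^*}^2+\tau|u_0|_H^2+|u_{0\Gamma}|_{H_\Gamma}^2$ and $\int_0^T(|g|_H^2+|g_\Gamma|_{H_\Gamma}^2)$, which is precisely \eqref{dep}. In particular, setting the two data equal forces $u_1=u_2$ and $u_{\Gamma,1}=u_{\Gamma,2}$, giving the uniqueness asserted in the theorem.
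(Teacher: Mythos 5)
Your proof is correct and follows essentially the same route as the paper: testing the difference of \eqref{pier5} with ${\mathcal N}(u_1-u_2)$, the difference of \eqref{pier6} with $u_1-u_2$, and the difference of \eqref{pier8} with $u_{\Gamma,1}-u_{\Gamma,2}$, discarding the two monotone terms (including the boundary one via monotonicity of $\partial\widehat{\beta}_{\Gamma,W_\Gamma}$ in the $W_\Gamma^*$--$W_\Gamma$ duality), and closing with the interpolation of the $H$-norm between $V$ and $V^*$ plus Gronwall. The only cosmetic difference is that you derive $|u|_H^2\le\delta|\nabla u|_H^2+C_\delta|u|_{V^*}^2$ directly from $|u|_H^2=\langle u,u\rangle_{V^*,V}\le|u|_{V^*}|u|_V$, whereas the paper invokes the compactness (Ehrling-type) inequality cited from Lions and Lions--Magenes; both justifications are valid.
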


\smallskip

Of course, this theorem implies the uniqueness of the solution \pier{$(u,\mu, \xi, u_\Gamma, \xi_\Gamma )$  obtained
by the limit procedure in Theorem~\ref{convergence}. The theorem will be proved in Section~4.}

\pier{%
\section{Approximate solutions}
\setcounter{equation}{0}
In this section we sketch the main steps for proving the existence of a solution stated in Proposition~\ref{prop}
and, at the same time, we will derive uniform estimates that will be useful in the proof of Theorem~\ref{convergence}.}

\pier{Thus, we approximate the problem \eqref{chd1}--\eqref{chd6} by introducing the {Y}osida  regularizations $\beta_\varepsilon$ for 
$\beta$ and   $\beta _{\Gamma,\varepsilon }$ for $\beta _{\Gamma} $ (see, e.g., \cite{Bar10, Bre73}): 
for each $\varepsilon  \in (0,1]$ and for all $r \in \mathbb{R}$ we set}
\begin{align}
	&\beta _\varepsilon (r)
	:= \frac{1}{\varepsilon } \bigl( r-J_\varepsilon (r) \bigr), 
	\quad 
	J_\varepsilon (r) 
	:=(I+\varepsilon  \beta )^{-1} (r),
	\label{yosida1}
	\\
	&\beta _{\Gamma, \varepsilon } (r)
	:= \frac{1}{\varepsilon} \bigl( r-J_{\Gamma,\varepsilon }(r) \bigr ), 
	\quad 
	J_{\Gamma,\varepsilon }(r):=(I+\varepsilon \beta _\Gamma )^{-1} (r).
	\label{yosida2}
\end{align}
Then, we \pier{point out that the condition (\ref{cccond}) in (A2) implies that
\begin{equation}
	\bigl |\beta_\varepsilon (r)\bigr | 
	\le \varrho \bigl |\beta _{\Gamma,\varepsilon } (r)\bigr |+c_0
	\quad 
	\text{for all } r \in \mathbb{R},
	\label{ccconde}
\end{equation} 
for all $\varepsilon \in (0,1]$, with the same constants $\varrho $ and $c_0$ (see Appendix)}.  
We also have 
$\beta _\varepsilon (0)=\beta _{\Gamma, \varepsilon }(0)=0$. 
\smallskip

\pier{Then, the problem in terms of the $\varepsilon$-approximation reads as follows: find a triplet $(u_\varepsilon, \mu_\varepsilon, u_{\Gamma,\varepsilon})$, with at least the same regularity as $(u_\kappa, \mu_\kappa, u_{\Gamma,\kappa})$ in Proposition~\ref{prop}, satisfying
\begin{align} 
	& \partial_t u_\varepsilon - \Delta \mu_\varepsilon =0 
	\quad \text{a.e.\ in }Q, 
	\label{chd1e}
	\\
	&\mu_\varepsilon = \tau \partial_t u_\varepsilon-\Delta u_\varepsilon + \beta_\varepsilon(u_\varepsilon) + \pi (u_\varepsilon)-g
	\quad \text{a.e.\ in }Q, 
	\label{chd2e}
	\\
	& \partial_{\boldsymbol{\nu}} \mu_\varepsilon =0 
	\quad  \text{a.e.\ on }\Sigma, 
	\label{chd3e}
	\\
	& (u_{\varepsilon})_{|_{\Gamma}} =u_{\Gamma,\varepsilon} 
	\quad  \text{a.e.\ on }\Sigma,
	\label{chd4e}
	\\ 
	&\partial_t u_{\Gamma,\varepsilon}+ \partial_{\boldsymbol{\nu}} u_\varepsilon  
	- \kappa \Delta_\Gamma u_{\Gamma,\varepsilon} 
	+ \beta_{\Gamma, \varepsilon} (u_{\Gamma,\varepsilon}) 
	+ \pi_\Gamma(u_{\Gamma, \varepsilon})= g_\Gamma
	\quad  \text{a.e.\ on }\Sigma, 
	\label{chd5e}
	\\
	& u_\varepsilon(0)=u_0 \quad \text{a.e.\ in } \Omega, 
	\quad u_{\Gamma,\varepsilon}(0)=u_{0\Gamma} \quad \text{a.e.\ on }\Gamma,
	\label{chd6e}
\end{align}
where \eqref{chd1e} and \eqref{chd3e} have to be collected into the proper 
variational formulation if $\tau= 0$ (cf.~\eqref{pier11}). From the results shown in~\cite{CF15b, CGS14} it follows that
there exists such a triplet~$(u_\varepsilon, \mu_\varepsilon, u_{\Gamma,\varepsilon})$ and, in addition, it is unique.
We observe that for the proof one can use the abstract theory of doubly nonlinear evolution equations presented in~\cite{CV90} 
and argue in the function spaces} 
\begin{align*}
	& H_0:=\bigl\{ x \in H \ : \ m(z)=0 \bigr\}, \\
	& V_0:=V \cap H_0, \\
	& V_{0*}:=\bigl\{ z^* \in V^* \ : \ \langle z^*,1 \rangle _{V^*,V} =0 \bigr\},
\end{align*}
where the mean value function $m:H \to \mathbb{R}$ is defined by 
\begin{equation*}
	m(z):=\frac{1}{|\Omega|} \int _\Omega z dx.
\end{equation*}
As a remark, we can identify $V_{0*}$ by $V_0^*$ (see, \cite[Remark~2]{CF15}). 
Then, from the Poincar\'e inequality, we have that there exists a positive constant 
$C_{\rm P}$ such that 
\begin{equation}
	|z|_{V} \le C_{\rm P} |\nabla z|_{H} 
	\quad \text{for all } \pier{z \in V_0},
\label{pier3}
\end{equation}
 that is, we see that $| \cdot |_{V_0}:=|\nabla \cdot |_{H}$ and 
the standard $|\cdot |_{V}$ are equivalent \pier{norms} on $V_0$. 
Moreover, we introduce the linear, bijective, and symmetric operator ${\mathcal N}: V_{0*} \to V_0$ by 
$v={\mathcal N}v^* $ if and only if $m(v)=0$ and 
\begin{equation*}
	\int_{\Omega} \nabla {\mathcal N}v^* \cdot \nabla z dx =\int_{\Omega} \nabla v \cdot \nabla z dx = \langle v^*,z \rangle_{V^*,V} \quad \text{for all } z \in V.  
\end{equation*}
Then we can introduce the norm 
\begin{equation*}
	|z^*|_{V_{0\ast}}
	:= \left( \int_\Omega | \nabla {\mathcal N} z^*|^2 dx \right)^{\!\!1/2} 
	= |{\mathcal N} z^*|_{V_0}
	\quad \text{for all } z^* \in V_{0\ast}.
\end{equation*}
%

\pier{Next, in the light of \eqref{chd1e}--\eqref{chd6e} we prove and collect some estimates for $(u_\varepsilon, 
\mu_\varepsilon, u_{\Gamma,\varepsilon})$ that are independent of 
$\varepsilon$ and $\kappa \in (0,1]$. The dependence on $\tau$ will be explicitly mentioned when needed.}

\subsection{Uniform estimates}

\begin{lemma} Let $\tau \in (0,1]$\pier{; then} 
there exists a positive constant $M_1:=M_1(\tau)$ 
independent of \pier{$\varepsilon, \kappa \in (0,1]$} such that
\begin{gather}
	| u_\varepsilon |_{H^1(0,T;V^*) }
	+
	| u_\varepsilon |_{L^\infty(0,T;V)}
	+
	\sqrt{\tau} | \partial_t u_\varepsilon |_{L^2(0,T;H)}
	+ 
	| u_{\Gamma, \varepsilon} |_{H^1(0,T;H_\Gamma)}
	+
	| u_{\Gamma, \varepsilon} |_{L^\infty(0,T;W_\Gamma)}
	\nonumber \\
	{} +
	\sqrt{\kappa}
	| u_{\Gamma, \varepsilon} |_{L^\infty(0,T;V_\Gamma)}
	+ 
	\bigl| \widehat{\beta}_\varepsilon ( u_\varepsilon ) \bigr|_{L^\infty (0,T;L^1(\Omega))}
	+
	\bigl| \widehat{\beta}_{\Gamma, \varepsilon}( u_{\Gamma, \varepsilon} ) \bigr|_{L^\infty (0,T;L^1(\Gamma))}
	\le M_1.
	\label{lemma1}
\end{gather}
\pier{Otherwise, if $\tau=0$,} then there exists a positive constant $M_2$ 
independent of $\varepsilon, \kappa \in (0,1]$ such~that
\begin{gather}
	| u_\varepsilon |_{H^1(0,T;V^*) }
	+
	| u_\varepsilon |_{L^\infty(0,T;V)}
	+
	| u_{\Gamma, \varepsilon} |_{H^1(0,T;H_\Gamma)}
	+
	| u_{\Gamma, \varepsilon} |_{L^\infty(0,T;W_\Gamma)}
	\nonumber \\
	{} +
	\sqrt{\kappa}
	| u_{\Gamma, \varepsilon} |_{L^\infty(0,T;V_\Gamma)}
	+ 
	\bigl| \widehat{\beta}_\varepsilon ( u_\varepsilon ) \bigr|_{L^\infty (0,T;L^1(\Omega))}
	+
	\bigl| \widehat{\beta}_{\Gamma, \varepsilon}( u_{\Gamma, \varepsilon} ) \bigr|_{L^\infty (0,T;L^1(\Gamma))}
	\le M_2.
	\label{lemma1b}
\end{gather}
\end{lemma}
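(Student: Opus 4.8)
The plan is to obtain all the bounds from a single energy estimate, carried out on the $\varepsilon$-regularized system \eqref{chd1e}--\eqref{chd6e}, whose solution (by Proposition~\ref{prop}) is regular enough to justify the manipulations below. First I would test \eqref{chd1e} by $\mu_\varepsilon$ (using the variational formulation \eqref{pier11} when $\tau=0$) and integrate over $\Omega$; after integration by parts and use of the homogeneous Neumann condition \eqref{chd3e}, this gives $\langle \partial_t u_\varepsilon,\mu_\varepsilon\rangle_{V^*,V}+\int_\Omega|\nabla\mu_\varepsilon|^2\,dx=0$. Substituting $\mu_\varepsilon$ from \eqref{chd2e}, the key step is to integrate by parts in $\int_\Omega(-\Delta u_\varepsilon)\,\partial_t u_\varepsilon\,dx$, which produces $\tfrac12\tfrac{d}{dt}\int_\Omega|\nabla u_\varepsilon|^2\,dx$ together with the boundary contribution $-\int_\Gamma\partial_{\boldsymbol{\nu}} u_\varepsilon\,\partial_t u_{\Gamma,\varepsilon}\,d\Gamma$, where I have invoked the trace identity \eqref{chd4e}. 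Replacing $\partial_{\boldsymbol{\nu}} u_\varepsilon$ through the dynamic boundary condition \eqref{chd5e} turns this term into $\int_\Gamma|\partial_t u_{\Gamma,\varepsilon}|^2\,d\Gamma$, the surface-diffusion contribution $\tfrac{\kappa}{2}\tfrac{d}{dt}\int_\Gamma|\nabla_\Gamma u_{\Gamma,\varepsilon}|^2\,d\Gamma$, and the monotone and data terms on $\Gamma$.

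Collecting everything, I obtain the differential identity
\[\frac{d}{dt}\mathcal E_\varepsilon(t)+\tau\int_\Omega|\partial_t u_\varepsilon|^2\,dx+\int_\Gamma|\partial_t u_{\Gamma,\varepsilon}|^2\,d\Gamma+\int_\Omega|\nabla\mu_\varepsilon|^2\,dx=\mathcal R_\varepsilon(t),\]
with the energy
\[\mathcal E_\varepsilon(t):=\frac12\int_\Omega|\nabla u_\varepsilon|^2\,dx+\int_\Omega\widehat\beta_\varepsilon(u_\varepsilon)\,dx+\frac{\kappa}{2}\int_\Gamma|\nabla_\Gamma u_{\Gamma,\varepsilon}|^2\,d\Gamma+\int_\Gamma\widehat\beta_{\Gamma,\varepsilon}(u_{\Gamma,\varepsilon})\,d\Gamma,\]
and $\mathcal R_\varepsilon$ gathering the contributions of $\pi,\pi_\Gamma,g,g_\Gamma$. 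The anti-monotone terms are handled by writing $\int_\Omega\pi(u_\varepsilon)\partial_t u_\varepsilon=\tfrac{d}{dt}\int_\Omega\widehat\pi(u_\varepsilon)$ with a primitive $\widehat\pi$ of $\pi$ (and analogously on $\Gamma$); after integration in time they are bounded, using (A3) together with \eqref{pier3}, the Poincar\'e--Wirtinger inequality and the conserved mean value of $u_\varepsilon$, by a small fraction of $\int_\Omega|\nabla u_\varepsilon|^2\,dx$ plus a constant, hence absorbed into $\mathcal E_\varepsilon$. The surface data term $-\int_\Gamma g_\Gamma\partial_t u_{\Gamma,\varepsilon}$ is absorbed by a fraction of $\int_\Gamma|\partial_t u_{\Gamma,\varepsilon}|^2$ plus $C|g_\Gamma|_{H_\Gamma}^2$, thanks to (A5).

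The main obstacle is the bulk data term $-\int_\Omega g\,\partial_t u_\varepsilon$ when $\tau=0$, since then $\partial_t u_\varepsilon$ belongs only to $V^*$ and no $L^2(0,T;H)$ control of it is available. Here I would split according to the two alternatives in (A5). If $g\in L^2(0,T;V)$, I rewrite the term as the duality $\langle\partial_t u_\varepsilon,g\rangle_{V^*,V}$ and exploit that $\partial_t u_\varepsilon$ has zero mean (by \eqref{chd1e}, \eqref{chd3e}), so that $|\partial_t u_\varepsilon|_{V_{0\ast}}=|\nabla\mu_\varepsilon|_H$ by the very definition of the operator $\mathcal N$; this term is then bounded by $\tfrac12|\nabla\mu_\varepsilon|_H^2+C|g|_V^2$, and the first summand is absorbed by the left-hand side. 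If instead $g\in H^1(0,T;H)$, I integrate by parts in time and control $\int_\Omega g(t)u_\varepsilon(t)$ and $\int_0^t\int_\Omega\partial_t g\,u_\varepsilon$ by the (not yet closed) $L^\infty(0,T;H)$ bound on $u_\varepsilon$, closing the estimate by Gronwall. When $\tau>0$ the same term is simply estimated by $\tfrac{\tau}{4}|\partial_t u_\varepsilon|_H^2+C_\tau|g|_H^2$, which is why the constant $M_1$ is allowed to depend on $\tau$.

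Finally, integrating in time, bounding $\mathcal E_\varepsilon(0)$ uniformly in $\varepsilon,\kappa$ through (A4) (using $\widehat\beta_\varepsilon(u_0)\le\widehat\beta(u_0)\in L^1(\Omega)$, $\widehat\beta_{\Gamma,\varepsilon}(u_{0\Gamma})\le\widehat\beta_\Gamma(u_{0\Gamma})\in L^1(\Gamma)$, and $\kappa\le1$), and applying Gronwall's lemma deliver all the stated bounds. The $L^\infty(0,T;V)$ bound on $u_\varepsilon$ follows from the gradient control, mass conservation and \eqref{pier3}; the $H^1(0,T;V^*)$ bound follows from $\partial_t u_\varepsilon=\Delta\mu_\varepsilon$ and the $L^2(0,T;H)$ bound on $\nabla\mu_\varepsilon$; the $\sqrt\tau$- and $\sqrt\kappa$-weighted norms and the two $L^1$-potential bounds are read off directly from the dissipation and from $\mathcal E_\varepsilon$. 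The only estimate not coming from the surface diffusion is the uniform $L^\infty(0,T;W_\Gamma)$ bound on $u_{\Gamma,\varepsilon}$: since $u_{\Gamma,\varepsilon}=(u_\varepsilon)_{|_\Gamma}$, it is recovered from the uniform $L^\infty(0,T;V)$ bound on $u_\varepsilon$ via the trace theorem, which is precisely the reason it persists as $\kappa\searrow0$. The case $\tau=0$ giving \eqref{lemma1b} is obtained identically, dropping the now-absent viscous dissipation.
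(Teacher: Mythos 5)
Your overall strategy coincides with the paper's: a single energy estimate obtained by testing \eqref{chd1e} by $\mu_\varepsilon$ and \eqref{chd2e} by $\partial_t u_\varepsilon$, so that the cross term $\int_\Omega \mu_\varepsilon\,\partial_t u_\varepsilon\,dx$ cancels. (The paper tests \eqref{chd1e} by ${\mathcal N}(\partial_t u_\varepsilon)$ instead, but since ${\mathcal N}(\partial_t u_\varepsilon)=-\bigl(\mu_\varepsilon-m(\mu_\varepsilon)\bigr)$ this is the same test function up to sign and a constant, and your dissipation $|\nabla\mu_\varepsilon|_H^2$ equals the paper's $|\partial_t u_\varepsilon|^2_{V_{0*}}$.) Your case splitting for the bulk datum $g$ and your recovery of the individual bounds at the end also match the paper.

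The genuine gap is in your treatment of the anti-monotone terms. You replace $\int_\Omega\pi(u_\varepsilon)\,\partial_t u_\varepsilon\,dx$ by $\frac{d}{dt}\int_\Omega\widehat\pi(u_\varepsilon)\,dx$ and claim that, after integration in time, the resulting endpoint term is bounded by \emph{a small fraction} of $\int_\Omega|\nabla u_\varepsilon|^2\,dx$ plus a constant. This is false in general: since $\pi$ is only Lipschitz, the best available bound (expanding $\widehat\pi$ around $m_0$ and using mass conservation to kill the linear term) is $\bigl|\int_\Omega\widehat\pi\bigl(u_\varepsilon(t)\bigr)\,dx\bigr|\le C+\frac{L}{2}\bigl|u_\varepsilon(t)-m_0\bigr|^2_H\le C+\frac{L\,C_{\rm P}^{\,2}}{2}\bigl|\nabla u_\varepsilon(t)\bigr|^2_H$, and the coefficient $L\,C_{\rm P}^{\,2}/2$ is a fixed constant determined by the data, not a free small parameter; it cannot be absorbed into the coefficient $1/2$ of $|\nabla u_\varepsilon(t)|_H^2$ in ${\mathcal E}_\varepsilon(t)$. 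The boundary analogue is worse: $\bigl|\int_\Gamma\widehat\pi_\Gamma\bigl(u_{\Gamma,\varepsilon}(t)\bigr)\,d\Gamma\bigr|$ costs a fixed multiple (involving $L_\Gamma$ and the trace constant) of $|u_\varepsilon(t)|_V^2$, with no mass conservation on $\Gamma$ to help. Gronwall cannot rescue this, because these terms sit at the endpoint time $t$, not under a time integral. This is exactly why the paper does \emph{not} integrate these terms by parts in time: it keeps $\pi(u_\varepsilon)\,\partial_t u_\varepsilon$ and $\pi_\Gamma(u_{\Gamma,\varepsilon})\,\partial_t u_{\Gamma,\varepsilon}$ paired with the time derivatives and estimates them by Young's inequality against the dissipation terms ($\tau|\partial_t u_\varepsilon|_H^2$, or $|\partial_t u_\varepsilon|^2_{V^*}$ when $\tau=0$, and $|\partial_t u_{\Gamma,\varepsilon}|^2_{H_\Gamma}$ on the boundary), leaving quadratic remainders $|u_\varepsilon(s)|_V^2$ and $|u_{\Gamma,\varepsilon}(s)|^2_{H_\Gamma}$ which appear under time integrals and are then handled by Gronwall regardless of the size of the constants; when $\tau=0$ this is also where the bound $|\nabla\pi(u_\varepsilon)|\le L|\nabla u_\varepsilon|$ enters, cf.\ \eqref{estono2}. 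Your argument could be repaired either by adopting this pairing, or by invoking the compactness inequality $|v|_H^2\le\delta|v|_V^2+C(\delta)|v|^2_{V^*}$ (see \cite[Lemme~5.1, p.~58]{Lions}) together with a separate differential estimate for $|u_\varepsilon(t)-m_0|^2_{V^*}$; neither step is in your proposal as written.
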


\begin{proof} 
\pier{By integrating (\ref{chd1e}) over $\Omega$ and using (\ref{chd3e}), we obtain $\int_\Omega
 \partial_t u_\varepsilon =0 $ a.e. in $(0,T)$. Hence, by integrating with respect to time and taking (\ref{chd6e})
 and (A2) into account, we easily have} 
\begin{equation}
	\frac{1}{|\Omega|} \int_\Omega u_\varepsilon(t) dx = \frac{1}{|\Omega|} \int_\Omega u_0 dx =m_0
	\label{masse}
\end{equation}
for all $t \in [0,T]$. \pier{Thus, we see that $ m(u_\varepsilon(t))=m_0 $   
and $\partial_t m(u_\varepsilon(s))=m(\partial_t u_\varepsilon(s))=0$, that is, 
$\partial_t u_\varepsilon(s) \in V_0$, for all $t \in [0,T]$ and  a.a.\ $s \in (0,T)$.}
Now, multiplying (\ref{chd1e}) by ${\mathcal N}(\partial _t u_\varepsilon)$, 
integrating the resultant over $\Omega$, and using (\ref{chd3e}) we obtain 
\begin{equation}
	\bigl( \partial_t u_\varepsilon(s), {\mathcal N} (\partial_t u_\varepsilon)(s) \bigr )_{\! H_0}
	+
	\int_\Omega \nabla \mu_\varepsilon(s) \cdot \nabla {\mathcal N} (\partial_t u_\varepsilon)(s) dx 
	=0
	\label{equu}
\end{equation}
for a.a.\ $s \in (0,T)$. \pier{The continuation of the estimate is formal, at least in the case $\tau=0$; 
however the reader may refer to 
\cite{CF15b, CGS14} for the details of a rigorous proof. Hence, adding $u_\varepsilon$ to both sides of 
\pier{(\ref{chd2e})}, testing it by $\partial_t u_\varepsilon$}  and using (\ref{chd5e}) we obtain 
\begin{gather}
	\int_\Omega \mu_\varepsilon(s) \partial_t u_\varepsilon(s) dx = 
	\tau \bigl| \partial_t u_\varepsilon(s)\bigr|_H^2+ 
	\frac{1}{2} \frac{d}{dt}\bigl| u_\varepsilon (s) \bigr|_V^2 
	+ \frac{d}{dt} \int_\Omega \widehat{\beta}_\varepsilon \bigl( u_\varepsilon(s) \bigr) dx 
	\nonumber \\
	{} + 
	\bigl| \partial_t u_{\Gamma, \varepsilon} (s) \bigr|_{H_\Gamma}^2 
	+ 
	\frac{\kappa}{2} \frac{d}{dt} \int_\Gamma \bigl| \nabla _\Gamma \pier{u_{\Gamma, \varepsilon}}(s) \bigr|^2 d\Gamma
	+
	\frac{d}{dt} \int_\Gamma \widehat{\beta}_{\Gamma, \varepsilon}\bigl( u_{\Gamma, \varepsilon} (s)\bigr) d\Gamma
	\nonumber \\
	{} - 
	\int_\Omega \bigl\{ g(s)+u_\varepsilon(s) -\pi\bigl( u_\varepsilon(s) \bigr) \bigr\} 
	\partial_t u_\varepsilon(s) dx 
	- 
	\int_\Gamma  \bigl\{ g_\Gamma (s) -\pier{\pi_\Gamma} \bigl( u_{\Gamma, \varepsilon}(s) \bigr) \bigr\} 
	\partial_t u_{\Gamma, \varepsilon}(s) d\Gamma
	\label{equmu}
\end{gather}
for a.a.\ $s \in (0,T)$. Here, \pier{let us remark} that 
\begin{align*}
	( z, {\mathcal N} z )_{H_0} 
	& = \langle z, {\mathcal N} z \rangle_{V^*,V} 
	 =  \int_\Omega \nabla {\mathcal N} z \cdot \nabla {\mathcal N} z dx \\
	& = |{\mathcal N} z |_{V_0}^2
	 = |z|_{V_{0*}}^2
	 = |z|_{V^*}^2 \quad \text{for all } z \in H_0,
\end{align*}
and 
\begin{align*}
	\int_\Omega \nabla z_1 \cdot \nabla ({\mathcal N} z_2) dx 
	 = \langle z_2, z_1 \rangle_{V^*,V} 
	 =  (z_2, z_1)_{\pier H} \quad \text{for all } \pier{z_1\in V, \ z_2 \in H_0}.
\end{align*}
Therefore, by  \pier{subtracting  (\ref{equmu}) from (\ref{equu}) we cancel two terms and obtain}
\begin{align}
	& \bigl| \partial_t u_\varepsilon(s)\bigr|_{V^*}^2 +
	\tau \bigl| \partial_t u_\varepsilon(s)\bigr|_H^2 + 
	\frac{1}{2} \frac{d}{dt}\bigl| u_\varepsilon (s) \bigr|_V^2 
	+ \frac{d}{dt} \int_\Omega \widehat{\beta}_\varepsilon \bigl( u_\varepsilon(s) \bigr) dx 
	\nonumber \\
	& \quad {} + 
	\bigl| \partial_t u_{\Gamma, \varepsilon} (s) \bigr|_{H_\Gamma}^2 
	+ 
	\frac{\kappa}{2} \frac{d}{dt} \int_\Gamma \bigl| \nabla _\Gamma \pier{u_{\Gamma, \varepsilon}} (s) \bigr|^2 d\Gamma
	+
	\frac{d}{dt} \int_\Gamma \widehat{\beta}_{\Gamma, \varepsilon}\bigl( u_{\Gamma, \varepsilon} (s)\bigr) d\Gamma
	\nonumber \\
	& \le 
	\int_\Omega \bigl\{ g(s)+u_\varepsilon(s) -\pi\bigl( u_\varepsilon(s) \bigr) \bigr\} 
	\partial_t u_\varepsilon(s) dx 
	+
	\int_\Gamma  \bigl\{ g_\Gamma (s) -\pier{\pi_\Gamma}\bigl( u_{\Gamma, \varepsilon}(s) \bigr) \bigr\} 
	\partial_t u_{\Gamma, \varepsilon}(s) d\Gamma
	\label{ineumu}
\end{align}
for a.a.\ $s \in (0,T)$. Then, we remark that there exists some positive constant $C_1$, depending on $L_\Gamma$, such that 
\begin{align}
	& \left| \int_\Gamma  \bigl\{ g_\Gamma (s) -\pier{\pi_\Gamma}\bigl( u_{\Gamma, \varepsilon}(s) \bigr) \bigr\} 
	\partial_t u_{\Gamma, \varepsilon}(s) d\Gamma \right| \nonumber \\
	& \quad  \le \frac{1}{2} \bigl| \partial_t u_{\Gamma, \varepsilon}(s) \bigr|_{H_\Gamma}^2 
	+ \bigl| g_\Gamma (s) \bigr|_{H_\Gamma}^2 + C_1\bigl( 1+ \bigl|  u_{\Gamma,\varepsilon}(s) \bigr|_{H_\Gamma}^2 \bigr)
	\label{estong}
\end{align}
for a.a.\ $s \in (0,T)$. 
In the case when $\tau>0$, \pier{the first term on the right-hand side of \eqref{ineumu} can be handled with the help of the Young inequality, as}
\begin{align}
	& \left| \int_\Omega \bigl\{ g(s)+u_\varepsilon(s) -\pi\bigl( u_\varepsilon(s) \bigr) \bigr\} 
	\partial_t u_\varepsilon(s) dx  \right| \nonumber \\
	&  \quad \le \frac{\tau}{2} \bigl| \partial_t u_\varepsilon(s) \bigr|_{H_\Gamma}^2 
	+ \frac{3}{2\tau} \bigl| g(s) \bigr|_{H}^2 + \frac{C_2}{\tau} \bigl( 1+ \bigl|  u_\varepsilon(s) \bigr|_{H}^2 \bigr),
	\label{estono1}
\end{align}
while if $\tau=0$ and $g \in L^2(0,T;V)$, then we have that
\begin{align}
	& \left| \int_\Omega \bigl\{ g(s)+u_\varepsilon(s) -\pi\bigl( u_\varepsilon(s) \bigr) \bigr\} 
	\partial_t u_\varepsilon(s) dx  \right| \nonumber \\
	&  \quad \le \frac{1}{2} \bigl| \partial_t u_\varepsilon(s) \bigr|_{V^*}^2 
	+ C_2 \bigl( 1+ \bigl| g(s) \bigr|_{V}^2 + \bigl|  u_\varepsilon(s) \bigr|_{V}^2 \bigr)
	\label{estono2}
\end{align}
for a.a.\ $s \in (0,T)$, 
where $C_2$ is a positive constant, depending on $L$.
On the other hand, if $\tau=0$ and $g \in H^1(0,T;H)$, \pier{then we treat separately the related term 
and integrate by parts in time, so to obtain}
\begin{equation}
	\int_0^t\!\!\int _\Omega g(s) \partial_t u_\varepsilon(s)  dxds 
	= \int_\Omega g(t)u_\varepsilon(t) dx 
	-\int_\Omega g(0)u_0 dx 
	- \int_0^t\!\!\int_\Omega \partial_t g (s)  u_\varepsilon(s) dxds
	\label{ibp}
\end{equation} 
\pier{on the right-hand side}, when integrating (\ref{ineumu}) from $0$ to $t \in[0,T]$. 
\pier{Then, one has to estimate the three terms above.}

Hence, \pier{let us integrate (\ref{ineumu}) with respect to time.  In the case $\tau >0$, we use 
(\ref{estong}) and (\ref{estono1})}
to find that there exists a positive constant $C_3$, depending on $C_1$, such that
\begin{align}
	& \frac{1}{2} | \partial_t u_\varepsilon |_{L^2(0,t;V^*)}^2 
	+
	\frac{\tau}{2} | \partial_t u_\varepsilon |_{L^2(0,T;H)}^2 
	+ 
	\frac{1}{2} | \partial_t u_{\Gamma, \varepsilon} |_{L^2(0,t;H_\Gamma)}^2 
	+ 
	\frac{1}{2} \bigl| u_\varepsilon (t) \bigr|_V^2 
	\nonumber \\
	& \quad  {} 
	+ 
	\pier{\frac{\kappa}{2} \int_\Gamma \bigl| \nabla _\Gamma  \pier{u_{\Gamma, \varepsilon}} (t) \bigr|^2 d\Gamma+ 
	\int_\Omega \widehat{\beta}_\varepsilon \bigl( u_\varepsilon(t) \bigr) dx 
	+
	\int_\Gamma \widehat{\beta}_{\Gamma, \varepsilon}\bigl( u_{\Gamma, \varepsilon} (t)\bigr) d\Gamma}
	\nonumber \\
	&\le 
	\pier{\frac{1}{2} | u_0 |_V^2 
	+ 
	\frac{\kappa}{2} \int_\Gamma | \nabla _\Gamma u_{0\Gamma} |^2 d\Gamma
	+ 
	\int_\Omega \widehat{\beta} ( u_0) dx 
	+
	\int_\Gamma \widehat{\beta}_{\Gamma}( u_{0\Gamma} ) d\Gamma
	+ |g_\Gamma |_{L^2(0,T;H_\Gamma)} ^2}
	\nonumber \\
& \quad 
	\pier{{}+	
	C_3 \int_0^t \bigl( 1+ \bigl|  u_{\varepsilon}(s) \bigr|_{V}^2 \bigr)ds
	+
       \frac{3}{2\tau} | g |_{L^2(0,T;H)}^2 
	+ \frac{C_2}{\tau} \int_0^t \bigl( 1+ \bigl|  u_\varepsilon(s) \bigr|_{H}^2 \bigr) ds 
	,}
	\label{last}
\end{align}
where we used the linear continuity of the trace operator from $V$ to $H_\Gamma$
and 
the fundamental property of the {M}oreau--{Y}osida regularizations
\begin{equation*}
	0 \le \widehat{\beta }_\varepsilon (r) \le \widehat{\beta }(r), \quad 
	0 \le \widehat{\beta }_{\Gamma,\varepsilon} (r) \le \widehat{\beta }_{\Gamma }(r)
	\quad \pier{\mbox{for all } \varepsilon \in (0,1] \hbox{ and } r \in \mathbb{R}}.
\end{equation*}
Thus, we can apply the {G}ronwall lemma and \pier{infer} the estimate (\ref{lemma1})\pier{.}
In the case when $\tau=0$, \pier{if $g \in L^2(0,T;V)$,} then \pier{we use \eqref{estono2} and the last two terms of the estimate (\ref{last}) modify into
\begin{equation*}
    C_2 |g|_{L^2(0,T;V)}^2 + C_2 \int_0^t \bigl( 1+ \bigl|  u_\varepsilon(s) \bigr|_{V}^2 \bigr) ds .
\end{equation*}
Thus,} the estimate (\ref{lemma1b}) can be obtained \pier{still} by applying the {G}ronwall lemma, observing that 
\pier{$C_2$ is independent of $\tau$.  Instead, if $g \in H^1(0,T;H)$,
 then in the right-hand side in the estimate (\ref{last}) we can find the terms} 
\begin{equation*}
	\frac{1}{4}\bigl| u_\varepsilon(t) \bigr|_V^2 
	+
	|g|_{C([0,T];H)}^2 
	+ 
	|g|_{C([0,T];H)}|u_0|_H
	+ 
	\frac{1}{2}
	|\partial _t g|_{L^2(0,T;H)}^2 
	+\pier{\frac{1}{2}\int_0^t \big| u_\varepsilon(s) \bigr|_V^2ds},
\end{equation*}
\pier{derived from \eqref{ibp} by applying the Young inequality and the embedding  
inequality $\takeshi{|} z \takeshi{|}_H \leq \takeshi{|} z \takeshi{|}_V$, $z\in V$.
Then,  the estimate (\ref{lemma1b}) follows easily also in this case.}
\end{proof}

\pier{In the next lemmas we will not deal with constants as upperbounds, but with time functions whose 
boundedness in $L^2(0, T)$ (or  $L^\infty (0, T)$) is understood to be uniform with respect to 
$\varepsilon$ and $\kappa \in (0,1]$.}

\begin{lemma} 
There exists a function $\Lambda_0$, bounded in $L^2(0,T)$, such that
\begin{equation}
	\bigl| \mu_\varepsilon(t)-m \bigl( \mu_\varepsilon(t) \bigr) \bigr|_{V} 
	\le \Lambda_0(t)
	\label{32}
\end{equation}
for a.a.\ $t \in (0,T)$. 
\end{lemma}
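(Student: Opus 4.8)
The plan is to reduce the assertion to a uniform bound on the gradient of $\mu_\varepsilon$. Since $\mu_\varepsilon(t)-m(\mu_\varepsilon(t))$ has zero mean by construction, it belongs to $V_0$, and the Poincar\'e inequality \eqref{pier3} yields
\[
	\bigl| \mu_\varepsilon(t)-m\bigl(\mu_\varepsilon(t)\bigr) \bigr|_V \le C_{\rm P} \bigl| \nabla \mu_\varepsilon(t) \bigr|_H
\]
for a.a.\ $t\in(0,T)$. Hence it suffices to exhibit a function, bounded in $L^2(0,T)$ uniformly in $\varepsilon,\kappa\in(0,1]$, that dominates $|\nabla\mu_\varepsilon(t)|_H$.

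First I would recall that, for every $\tau\ge 0$, equations \eqref{chd1e} and \eqref{chd3e} give the weak identity
\[
	\bigl\langle \partial_t u_\varepsilon(t), z\bigr\rangle_{V^*,V} + \int_\Omega \nabla\mu_\varepsilon(t)\cdot\nabla z\, dx = 0 \quad \text{for all } z\in V,
\]
for a.a.\ $t\in(0,T)$ (cf.\ \eqref{pier11}). Choosing $z=\mu_\varepsilon(t)\in V$, which is admissible because $\mu_\varepsilon\in L^2(0,T;V)$ by Proposition~\ref{prop}, I obtain $\int_\Omega|\nabla\mu_\varepsilon(t)|^2 dx = -\langle\partial_t u_\varepsilon(t),\mu_\varepsilon(t)\rangle_{V^*,V}$. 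The crucial point is that $\partial_t u_\varepsilon(t)$ has zero mean: indeed \eqref{masse} gives $m(u_\varepsilon(t))=m_0$, whence $\partial_t u_\varepsilon(t)\in V_0$, so its pairing with the constant $m(\mu_\varepsilon(t))$ vanishes. Thus I may replace $\mu_\varepsilon(t)$ by its oscillation and, combining with the previous Poincar\'e bound, deduce
\[
	\bigl| \nabla\mu_\varepsilon(t) \bigr|_H^2 = -\bigl\langle \partial_t u_\varepsilon(t), \mu_\varepsilon(t)-m\bigl(\mu_\varepsilon(t)\bigr)\bigr\rangle_{V^*,V} \le C_{\rm P} \bigl| \partial_t u_\varepsilon(t) \bigr|_{V^*} \bigl| \nabla\mu_\varepsilon(t) \bigr|_H,
\]
so that $|\nabla\mu_\varepsilon(t)|_H \le C_{\rm P}\,|\partial_t u_\varepsilon(t)|_{V^*}$.

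Combining the two displayed inequalities I would set $\Lambda_0(t):=C_{\rm P}^2\,|\partial_t u_\varepsilon(t)|_{V^*}$, which satisfies \eqref{32}. Its $L^2(0,T)$-norm equals $C_{\rm P}^2\,|\partial_t u_\varepsilon|_{L^2(0,T;V^*)}$, bounded uniformly in $\varepsilon,\kappa\in(0,1]$ by \eqref{lemma1} (or \eqref{lemma1b} when $\tau=0$) of the previous lemma; this is exactly the sense of uniform boundedness fixed in the remark preceding the statement. The argument presents no serious difficulty: the only points requiring care are the membership $\partial_t u_\varepsilon(t)\in V_0$ (already recorded in the proof of the previous lemma) and the admissibility of the test function $z=\mu_\varepsilon(t)$, both guaranteed by the regularity in Proposition~\ref{prop}. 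I do not expect the mean value $m(\mu_\varepsilon(t))$ itself to be controllable by this method — only the oscillation is — which is precisely why the statement is phrased in terms of $\mu_\varepsilon(t)-m(\mu_\varepsilon(t))$.
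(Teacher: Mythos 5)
Your proof is correct and follows essentially the same route as the paper: testing \eqref{chd1e} with the zero-mean part of $\mu_\varepsilon(t)$ (you test with $\mu_\varepsilon(t)$ and then use $\partial_t u_\varepsilon(t)\in V_0$ to pass to the oscillation, which amounts to the same thing), combining the Poincar\'e inequality \eqref{pier3} with the duality bound, and concluding with $\Lambda_0 = C_{\rm P}^{\,2}\,|\partial_t u_\varepsilon|_{V^*}$, uniformly bounded in $L^2(0,T)$ by Lemma~3.1.
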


\begin{proof}
\pier{We recall  the Poincar\'e inequality \eqref{pier3} and test (\ref{chd1e}) by $\mu_\varepsilon(t)-m(\mu_\varepsilon(t)) \in V_0$. 
In view of (\ref{lemma1}) or (\ref{lemma1b}), we can deduce that 
\begin{align*}
	\bigl| \mu_\varepsilon(t)-m \bigl( \mu_\varepsilon(t) \bigr) \bigr|_{V}^2 
	& \le \bigl( C_{\rm P} \bigl| \nabla \bigl( \mu_\varepsilon(t)-m \bigl( \mu_\varepsilon(t) \bigr) \bigr) \bigr|_H \bigr)^2 \\
	&=  C_{\rm P}^{\,2}  \takeshi{ \bigl \langle}  \partial_t u_\varepsilon(t), \mu_\varepsilon(t)-m \bigl( \mu_\varepsilon(t) \bigr)  \takeshi{ \bigr \rangle}_{V^*,V}  \\
	& \le C_{\rm P}^{\, 2 }\bigl|    \partial_t u_\varepsilon (t)  \bigr|_{V^*} 
	\bigl| \mu_\varepsilon(t)-m \bigl( \mu_\varepsilon(t) \bigr) \bigr|_{V}
\end{align*}
for a.a.\ $t \in (0,T)$, whence \eqref{32} follows with 
$\Lambda_0 =   C_{\rm P}^{\,2} \takeshi{|} \partial_t u_\varepsilon \takeshi{|}_{V^*} $.}
\end{proof}

\begin{lemma} 
There exists a function $\Lambda_1$, bounded in $L^2(0,T)$, such that
\begin{equation}
	\bigl| \beta_\varepsilon \bigl( u_\varepsilon (t) \bigr) \bigr|_{L^1(\Omega)}
	+  
	\bigl| \beta_{\Gamma,\varepsilon} \bigl( u_{\Gamma,\varepsilon} (t) \bigr) \bigr|_{L^1(\Gamma)}
	\le \Lambda_1(t)
	\label{33}
\end{equation}
for a.a.\ $t \in (0,T)$. 
\end{lemma}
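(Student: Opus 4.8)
The plan is to obtain \eqref{33} by testing the chemical-potential equation \eqref{chd2e} against $u_\varepsilon(t)-m_0$, where $m_0 \in \interior D(\beta_\Gamma)\subseteq \interior D(\beta)$ is the conserved mean value, and then exploiting a coercivity property of the graphs near $m_0$. First I would multiply \eqref{chd2e} by $u_\varepsilon(t)-m_0$ and integrate over $\Omega$. Because $\int_\Omega (u_\varepsilon(t)-m_0)\,dx=0$ by \eqref{masse}, the chemical-potential contribution becomes $\int_\Omega (\mu_\varepsilon-m(\mu_\varepsilon))(u_\varepsilon-m_0)\,dx$, which is controlled by $\Lambda_0(t)\,|u_\varepsilon(t)-m_0|_H$ thanks to \eqref{32} and the uniform $L^\infty(0,T;V)$ bound in \eqref{lemma1}--\eqref{lemma1b}. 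The Laplacian term, integrated by parts, produces $|\nabla u_\varepsilon(t)|_H^2\ge 0$ (which I drop) plus the boundary integral $-\int_\Gamma \partial_{\boldsymbol{\nu}} u_\varepsilon\,(u_{\Gamma,\varepsilon}-m_0)\,d\Gamma$, where I have used the trace identity \eqref{chd4e}.

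The crucial step is to remove the normal-derivative term by invoking the dynamic boundary condition \eqref{chd5e}: substituting $\partial_{\boldsymbol{\nu}} u_\varepsilon = g_\Gamma-\partial_t u_{\Gamma,\varepsilon}+\kappa\Delta_\Gamma u_{\Gamma,\varepsilon}-\beta_{\Gamma,\varepsilon}(u_{\Gamma,\varepsilon})-\pi_\Gamma(u_{\Gamma,\varepsilon})$ and integrating by parts on the closed manifold $\Gamma$ yields the nonnegative term $\kappa\,|\nabla_\Gamma u_{\Gamma,\varepsilon}(t)|_{H_\Gamma}^2$ (again dropped) together with the surface monotone contribution $\int_\Gamma \beta_{\Gamma,\varepsilon}(u_{\Gamma,\varepsilon})(u_{\Gamma,\varepsilon}-m_0)\,d\Gamma$. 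Collecting everything, I would arrive at an identity in which
\[
\int_\Omega \beta_\varepsilon(u_\varepsilon)(u_\varepsilon-m_0)\,dx + \int_\Gamma \beta_{\Gamma,\varepsilon}(u_{\Gamma,\varepsilon})(u_{\Gamma,\varepsilon}-m_0)\,d\Gamma
\]
is bounded above by the remaining terms: the $\tau\partial_t u_\varepsilon$ and $\partial_t u_{\Gamma,\varepsilon}$ contributions, estimated by Cauchy--Schwarz against the $L^2(0,T;H)$ and $L^2(0,T;H_\Gamma)$ bounds of \eqref{lemma1}--\eqref{lemma1b}; the Lipschitz terms $\pi(u_\varepsilon)$, $\pi_\Gamma(u_{\Gamma,\varepsilon})$, controlled by (A3) together with the $L^\infty(0,T;V)$ and $L^\infty(0,T;W_\Gamma)$ bounds; and the data terms in $g$, $g_\Gamma$, belonging to $L^2(0,T;H)$ and $L^2(0,T;H_\Gamma)$ by (A5). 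Each of these is a function bounded in $L^2(0,T)$ uniformly in $\varepsilon,\kappa\in(0,1]$.

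To convert this into the desired $L^1$ estimate, I would invoke the standard coercivity lemma for maximal monotone graphs (see the Appendix and, e.g., \cite{GMS09, MZ05}): since $m_0\in\interior D(\beta_\Gamma)\subseteq \interior D(\beta)$, there exist constants $\delta,\delta_\Gamma>0$ and $C_\delta,C_{\delta,\Gamma}\ge 0$, independent of $\varepsilon\in(0,1]$, with $\beta_\varepsilon(r)(r-m_0)\ge \delta\,|\beta_\varepsilon(r)|-C_\delta$ and $\beta_{\Gamma,\varepsilon}(r)(r-m_0)\ge \delta_\Gamma\,|\beta_{\Gamma,\varepsilon}(r)|-C_{\delta,\Gamma}$ for all $r\in\mathbb{R}$. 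Integrating these pointwise bounds over $\Omega$ and $\Gamma$ respectively estimates $\delta\,|\beta_\varepsilon(u_\varepsilon(t))|_{L^1(\Omega)}+\delta_\Gamma\,|\beta_{\Gamma,\varepsilon}(u_{\Gamma,\varepsilon}(t))|_{L^1(\Gamma)}$ from below, and combining this with the upper bound above produces \eqref{33} with a suitable $\Lambda_1$ bounded in $L^2(0,T)$. I expect the main obstacle to be the careful bookkeeping in the boundary integration by parts and the verification that every right-hand term is genuinely dominated by an $L^2(0,T)$ function uniform in both $\varepsilon$ and $\kappa$; the coercivity lemma itself, though essential, is a well-established tool whose validity for the Yosida approximations follows precisely from $m_0$ lying in the interior of both effective domains.
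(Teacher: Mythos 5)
Your proposal is correct and follows essentially the same route as the paper's own proof: testing \eqref{chd2e} by $u_\varepsilon-m_0$, eliminating the normal-derivative term via the dynamic boundary condition \eqref{chd5e}, using the mean-value property \eqref{masse} to replace $\mu_\varepsilon$ by $\mu_\varepsilon-m(\mu_\varepsilon)$, and invoking the coercivity inequality of \cite[Section~5]{GMS09} for the Yosida regularizations (the paper's inequality \eqref{gms}), with all remaining terms dominated by the bounds of Lemmas~3.1--3.2. The only cosmetic difference is that the paper squares the resulting inequality to exhibit an $L^1(0,T)$ majorant, whereas you bound each term by a product of $L^2(0,T)$ and $L^\infty(0,T)$ quantities; also note that the coercivity inequality is taken directly from \cite{GMS09}, not from the paper's Appendix, which concerns only the domination property \eqref{ccconde}.
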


\begin{proof}
\pier{First}, we recall the useful inequality \pier{proved in} \cite[Section~5]{GMS09} 
\pier{and holding for both graphs} under the assumptions (A2) and (A4): 
there exist two positive constants $\delta_0$ and $c_1$ such that
\begin{equation}
	\beta_\varepsilon(r)(r-m_0)\ge \delta_0 \bigl| \beta_\varepsilon(r) \bigr|-c_1, \quad 
	\beta_{\Gamma,\varepsilon}(r)(r-m_0)\ge \delta_0 \bigl| \beta_{\Gamma,\varepsilon}(r) \bigr|-c_1
	\label{gms}
\end{equation} 
for all $r \in \mathbb{R}$ and $\varepsilon \in (0,1]$. 
\pier{Then, we test} (\ref{chd2e}) by $u_\varepsilon-m_0$ and take advantage of  (\ref{chd5e}) \pier{in order to} deduce that 
\begin{align*}
	& \delta_0 \int_\Omega \bigl| \beta_\varepsilon ( u_\varepsilon )\bigr| dx 
	+ 
	\delta_0 \int_\Gamma \bigl| \beta_{\Gamma, \varepsilon} ( u_{\Gamma,\varepsilon} )\bigr|d\Gamma 
	- c_1 \bigl( |\Omega|+|\Gamma|\bigr) 
	\nonumber \\
	& \le \int_\Omega \beta_\varepsilon(u_\varepsilon) ( u_\varepsilon -m_0 ) dx 
	+  \int_\Gamma \beta_{\Gamma,\varepsilon} (u_{\Gamma,\varepsilon}) ( u_{\Gamma, \varepsilon}-m_0 ) d\Gamma
	\nonumber \\
	& \le \int_\Omega \bigl( \mu_\varepsilon -m ( \mu_\varepsilon)\bigr)
	( u_\varepsilon - m_0 ) dx 
	+ \int_\Omega \bigl( g-\tau \partial_t u_\varepsilon-\pi(u_\varepsilon) \bigr) (u_\varepsilon-m_0)dx 
	\nonumber \\
	& \quad {}
	- \int_\Omega |\nabla u_\varepsilon|^2 dx 
	+ \int_\Gamma \bigl( g_\Gamma - \partial_t u_{\Gamma,\varepsilon}-\pi_\Gamma 
	(u_{\Gamma,\varepsilon}) \bigr) (u_{\Gamma,\varepsilon}-m_0)d\Gamma 
	-\kappa \int_\Gamma |\nabla _\Gamma u_{\Gamma,\varepsilon}|^2 d\Gamma
\end{align*}
a.e.\ on (0,T), 
where based on (\ref{masse}) we used the following equality:
\begin{equation*}
	\int_\Omega m \bigl( \mu_\varepsilon(t) \bigr)\bigl( u_\varepsilon(t) - m_0 \bigr) dx 
	= m \bigl( \mu_\varepsilon(t) \bigr)\int_\Omega\bigl( u_\varepsilon(t) - m_0 \bigr) dx=0
\end{equation*}
for a.a.\ $t \in (0,T)$. 
Hence, by squaring we \pier{arrive at}
\begin{align*}
	& \pier{\delta_0^{\,2}} \left( \int_\Omega \bigl| \beta_\varepsilon ( u_\varepsilon )\bigr| dx 
	+ 
	 \int_\Gamma \bigl| \beta_{\Gamma, \varepsilon} ( u_{\Gamma,\varepsilon} )\bigr|d\Gamma 
	 \right)^{\!\!2}
	\nonumber \\
	& \le 
	4c_1^{\,2} \bigl( |\Omega|+|\Gamma|\bigr)^{\!2 }
	+
	4\bigl| \mu_\varepsilon -m ( \mu_\varepsilon)\bigr|_{\pier V}^2
	|u_\varepsilon - m_0|_{\pier H} ^2
	\nonumber \\
	& \quad {}
	+ 4\left(
	|g|_H+ \tau |\partial_t u_\varepsilon|_H+\sqrt{2} \bigl( L | u_\varepsilon |_H
	+ \bigl| \pi(0) \bigr| |\Omega|^{1/2} \bigr) \right) ^{\!2}
	|u_\varepsilon-m_0|_{\pier H}^2
	\nonumber \\ 
	& \quad {}+ 4 \left(  |g_\Gamma|_{H_\Gamma} 
	+ |\partial_t u_{\Gamma,\varepsilon}|_{H_\Gamma}
	+\sqrt{2} \bigl( L_\Gamma | u_{\Gamma,\varepsilon}|_{H_\Gamma}
	+ \bigl| \pier{\pi_\Gamma }(0) \bigr| |\Gamma|^{1/2} \bigr) \right) ^{\!2} 
	|u_{\Gamma,\varepsilon}-m_0|_{H_\Gamma} ^2
\end{align*}
a.e.\ on (0,T). \pier{Now, note that \eqref{lemma1} or \eqref{lemma1b}, \eqref{32} and  assumption~(A5)
enable us to infer that the right-hand side of the last inequality is a summable function in $(0,T)$.
Hence, it follows that there is a function 
$\Lambda_1$, bounded in $L^2(0,T)$,} such that (\ref{33}) holds. 
\end{proof}

\begin{lemma} 
There exist functions $\Lambda_2$ and $\Lambda_3$, bounded in $L^2(0,T)$, such that
\begin{gather}
	\bigl| m \bigl(\mu_\varepsilon (t) \bigr) \bigr| \le \Lambda_2(t), 
	\label{34-1} \\
	\bigl| \mu_\varepsilon (t)  \bigr|_{V} 
	\le \Lambda_3(t)
	\label{34-2}
\end{gather}
for a.a.\ $t \in (0,T)$. 
\end{lemma}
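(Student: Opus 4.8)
The plan is to establish the mean-value bound \eqref{34-1} first, and then to recover the full $V$-estimate \eqref{34-2} from it by combining with the bound \eqref{32}. To control $m(\mu_\varepsilon(t))$, I would integrate \eqref{chd2e} over $\Omega$ and divide by $|\Omega|$, obtaining
\begin{equation*}
	m\bigl(\mu_\varepsilon(t)\bigr) = \frac{1}{|\Omega|}\int_\Omega \Bigl( \tau \partial_t u_\varepsilon(t) - \Delta u_\varepsilon(t) + \beta_\varepsilon\bigl(u_\varepsilon(t)\bigr) + \pi\bigl(u_\varepsilon(t)\bigr) - g(t) \Bigr) dx
\end{equation*}
for a.a.\ $t\in(0,T)$. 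The viscous contribution drops out because $m(\partial_t u_\varepsilon)=0$ (recall that $\partial_t u_\varepsilon(s)\in V_0$, as derived in the proof of the first lemma), while by the Green formula $\int_\Omega \Delta u_\varepsilon(t)\,dx = \int_\Gamma \partial_{\boldsymbol{\nu}} u_\varepsilon(t)\,d\Gamma$ the Laplacian turns into a boundary flux integral; this integration is legitimate since $u_\varepsilon(t)\in H^2(\Omega)$ for a.a.\ $t$.

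Next I would invoke the dynamic boundary condition \eqref{chd5e} to rewrite $\partial_{\boldsymbol{\nu}} u_\varepsilon$ in terms of the remaining boundary quantities. The decisive simplification is that $\int_\Gamma \Delta_\Gamma u_{\Gamma,\varepsilon}(t)\,d\Gamma = 0$, since $\Gamma$ is a closed surface and the integral of the Laplace--Beltrami operator over it vanishes. Consequently the $\kappa$-dependent term disappears, so that the mean value is completely \emph{decoupled} from $\kappa$ and is expressed only through $\partial_t u_{\Gamma,\varepsilon}$, $\beta_{\Gamma,\varepsilon}(u_{\Gamma,\varepsilon})$, $\pi_\Gamma(u_{\Gamma,\varepsilon})$, and $g_\Gamma$ on $\Gamma$, together with the bulk terms already listed.

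It then remains to verify that every surviving contribution is bounded in $L^2(0,T)$ uniformly in $\varepsilon,\kappa\in(0,1]$. The two monotone contributions $\int_\Omega |\beta_\varepsilon(u_\varepsilon)|\,dx$ and $\int_\Gamma |\beta_{\Gamma,\varepsilon}(u_{\Gamma,\varepsilon})|\,d\Gamma$ are majorized by $\Lambda_1(t)$ thanks to \eqref{33}; the Lipschitz terms $\pi(u_\varepsilon)$ and $\pi_\Gamma(u_{\Gamma,\varepsilon})$ are handled via (A3) combined with the $L^\infty(0,T;V)$ and $L^\infty(0,T;W_\Gamma)$ bounds in \eqref{lemma1}--\eqref{lemma1b}; the surface time derivative is estimated by $\int_\Gamma \partial_t u_{\Gamma,\varepsilon}\,d\Gamma \le |\Gamma|^{1/2}\,|\partial_t u_{\Gamma,\varepsilon}|_{H_\Gamma}$ using the $H^1(0,T;H_\Gamma)$ bound in the same estimates; and $g$, $g_\Gamma$ are controlled by assumption (A5). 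Assembling these bounds yields \eqref{34-1} with a suitable $\Lambda_2$ bounded in $L^2(0,T)$.

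Finally, for \eqref{34-2} I would split $\mu_\varepsilon = \bigl(\mu_\varepsilon - m(\mu_\varepsilon)\bigr) + m(\mu_\varepsilon)$ and apply the triangle inequality in $V$. The oscillating part is bounded by $\Lambda_0(t)$ through \eqref{32}, while the constant part satisfies $|m(\mu_\varepsilon(t))|_V = |\Omega|^{1/2}\,|m(\mu_\varepsilon(t))|$ because $|1|_V = |\Omega|^{1/2}$; hence \eqref{34-2} holds with $\Lambda_3 := \Lambda_0 + |\Omega|^{1/2}\Lambda_2$, again bounded in $L^2(0,T)$. The argument presents no serious difficulty: the only points demanding care are the justification of the Green formula, guaranteed by the $H^2$-regularity of $u_\varepsilon(t)$ from Proposition~\ref{prop}, and the vanishing of the Laplace--Beltrami integral over the closed surface $\Gamma$, which is exactly what removes the dependence on $\kappa$ from the mean value and makes the bound uniform.
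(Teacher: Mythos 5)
Your proposal is correct and follows essentially the same route as the paper: integrate \eqref{chd2e} over $\Omega$, use mass conservation to kill the viscous term, convert the Laplacian to a boundary flux via Green's formula, substitute the dynamic boundary condition \eqref{chd5e} (where the Laplace--Beltrami term integrates to zero over the closed surface $\Gamma$), bound the surviving terms by Lemmas~3.1 and~3.3 together with (A3) and (A5), and finally obtain \eqref{34-2} by the splitting $\mu_\varepsilon = (\mu_\varepsilon - m(\mu_\varepsilon)) + m(\mu_\varepsilon)$ combined with \eqref{32}. The only difference is that you make explicit the steps (Green's formula justified by $H^2$-regularity, vanishing of $\int_\Gamma \Delta_\Gamma u_{\Gamma,\varepsilon}\,d\Gamma$) that the paper leaves implicit.
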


\begin{proof}
Integrating (\ref{chd2e}) over $\Omega$ directly \pier{and using (\ref{chd5e}) and (\ref{masse}) lead to} 
\begin{align*}
	|\Omega| m(\mu_\varepsilon) 
	& = \int_\Omega \pier{\bigl(
	 \beta_\varepsilon (u_\varepsilon) 
	+ \pi (u_\varepsilon) 
	- g \bigr)} dx
	\\ 
	& \quad 
	{} + \int_\Gamma 
	\bigl( \partial_t u_{\Gamma, \varepsilon} 
	+ \beta_{\Gamma, \varepsilon} (u_{\Gamma, \varepsilon} )
	+ \pi_\Gamma (u_{\Gamma, \varepsilon}) 
	- g_\Gamma \bigr) d\Gamma 
	\nonumber \\
	& \le 
	\bigl| \beta_\varepsilon (u_\varepsilon) \bigr|_{L^1(\Omega)}
	+ L |\pier{u_\varepsilon}|_H |\Omega|^{1/2} 
	+ \bigl| \pi (0) \bigr| |\Omega|
	+ |g|_{H} |\Omega|^{1/2}
	+ |\partial_t u_{\Gamma, \varepsilon}|_{H_\Gamma} |\Gamma|^{1/2} 
	\nonumber \\
	& \quad {}
	+ \bigl| \beta_{\Gamma, \varepsilon} (u_{\Gamma, \varepsilon} ) \bigr|_{L^1(\Gamma)}
	+ L_\pi |\pier{u_{\Gamma, \varepsilon}}|_{H_\Gamma} |\Gamma|^{1/2} 
	+ \bigl| \pi_\Gamma (0) \bigr| |\Gamma| 
	+ |g_\Gamma |_{H_\Gamma}|\Gamma|^{1/2}
\end{align*}
a.e.\ on $(0,T)$, 
whence Lemmas~3.1 and 3.3 allow us to \pier{conclude that a function $\Lambda_2$, bounded in $L^2(0,T)$, 
exists such that (\ref{34-1}) holds.
Next, we can combine (\ref{32}) and (\ref{34-1}) to deduce that 
\begin{align*}
	\bigl| \mu_\varepsilon(t) \bigr|_V 
	& \le \bigl| \mu_\varepsilon(t) - m\bigl(\mu_\varepsilon(t) \bigr)\bigr|_V
	+ \bigl| m \bigl(\mu_\varepsilon (t) \bigr) \bigr|_V
	\nonumber \\
	&  \le \Lambda_0(t) +  |\Omega|^{1/2}  \Lambda_2(t) =:\Lambda_3(t)
\end{align*}
for a.a.\ $t \in (0,T)$, where the function $\Lambda_3$ is bounded in $L^2(0,T)$ as well.} 
\end{proof}

\begin{lemma} 
There \pier{exists} a function $\Lambda_4$, bounded in $L^2(0,T)$, such that
\begin{equation}
	\bigl| \beta_\varepsilon \bigl( u_\varepsilon (t) \bigr) \bigr|_{H}
	+  
	\bigl| \beta_{\varepsilon} \bigl( u_{\Gamma,\varepsilon} (t) \bigr) \bigr|_{H_\Gamma}
	\le \Lambda_4(t)
	\label{35}
\end{equation}
for a.a.\ $t \in (0,T)$. 
\end{lemma}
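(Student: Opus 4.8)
The plan is to test the second equation \eqref{chd2e} by $\beta_\varepsilon(u_\varepsilon)$ and integrate over $\Omega$, so that $|\beta_\varepsilon(u_\varepsilon)|_H^2$ is produced. Writing \eqref{chd2e} as $\beta_\varepsilon(u_\varepsilon)=\mu_\varepsilon-\tau\partial_t u_\varepsilon+\Delta u_\varepsilon-\pi(u_\varepsilon)+g$ and exploiting the regularity $u_\varepsilon\in H^2(\Omega)$, $u_{\Gamma,\varepsilon}\in H^2(\Gamma)$ guaranteed by Proposition~\ref{prop}, I would integrate the diffusion term by parts. Since $\beta_\varepsilon$ is Lipschitz and nondecreasing, the chain rule gives $\nabla\beta_\varepsilon(u_\varepsilon)=\beta_\varepsilon'(u_\varepsilon)\nabla u_\varepsilon$ with $\beta_\varepsilon'\ge 0$, whence
\[
\int_\Omega\Delta u_\varepsilon\,\beta_\varepsilon(u_\varepsilon)\,dx
=-\int_\Omega\beta_\varepsilon'(u_\varepsilon)|\nabla u_\varepsilon|^2\,dx
+\int_\Gamma\partial_{\boldsymbol{\nu}}u_\varepsilon\,\beta_\varepsilon(u_{\Gamma,\varepsilon})\,d\Gamma,
\]
where the bulk term has the favourable sign and I used $(u_\varepsilon)_{|_\Gamma}=u_{\Gamma,\varepsilon}$.

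Next I would substitute $\partial_{\boldsymbol{\nu}}u_\varepsilon$ by means of the boundary condition \eqref{chd5e} and integrate the Laplace--Beltrami contribution by parts on the closed manifold $\Gamma$, which again yields a nonnegative term $-\kappa\int_\Gamma\Delta_\Gamma u_{\Gamma,\varepsilon}\,\beta_\varepsilon(u_{\Gamma,\varepsilon})\,d\Gamma=\kappa\int_\Gamma\beta_\varepsilon'(u_{\Gamma,\varepsilon})|\nabla_\Gamma u_{\Gamma,\varepsilon}|^2\,d\Gamma\ge 0$. Dropping the (nonnegative) Dirichlet-type terms on the left, the identity reduces to
\begin{align*}
& |\beta_\varepsilon(u_\varepsilon)|_H^2+\int_\Gamma\beta_{\Gamma,\varepsilon}(u_{\Gamma,\varepsilon})\,\beta_\varepsilon(u_{\Gamma,\varepsilon})\,d\Gamma \\
& \quad \le \int_\Omega\bigl(\mu_\varepsilon-\tau\partial_t u_\varepsilon-\pi(u_\varepsilon)+g\bigr)\beta_\varepsilon(u_\varepsilon)\,dx
+\int_\Gamma\bigl(g_\Gamma-\partial_t u_{\Gamma,\varepsilon}-\pi_\Gamma(u_{\Gamma,\varepsilon})\bigr)\beta_\varepsilon(u_{\Gamma,\varepsilon})\,d\Gamma
\end{align*}
a.e.\ in $(0,T)$.

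The heart of the argument, and the step I expect to be the main obstacle, is to recover an $L^2(\Gamma)$ bound for $\beta_\varepsilon(u_{\Gamma,\varepsilon})$ out of the surface cross term on the left. Since $\beta_\varepsilon$ and $\beta_{\Gamma,\varepsilon}$ are nondecreasing and vanish at $0$, they share the sign of their argument, so $\beta_{\Gamma,\varepsilon}(r)\beta_\varepsilon(r)=|\beta_{\Gamma,\varepsilon}(r)|\,|\beta_\varepsilon(r)|\ge 0$; coupling this with the approximate domination \eqref{ccconde} yields the pointwise inequality $|\beta_\varepsilon(r)|^2\le\varrho\,\beta_{\Gamma,\varepsilon}(r)\beta_\varepsilon(r)+c_0|\beta_\varepsilon(r)|$, that is
\[
\int_\Gamma\beta_{\Gamma,\varepsilon}(u_{\Gamma,\varepsilon})\beta_\varepsilon(u_{\Gamma,\varepsilon})\,d\Gamma
\ge \frac{1}{\varrho}\,\bigl|\beta_\varepsilon(u_{\Gamma,\varepsilon})\bigr|_{H_\Gamma}^2
-\frac{c_0}{\varrho}\int_\Gamma\bigl|\beta_\varepsilon(u_{\Gamma,\varepsilon})\bigr|\,d\Gamma.
\]
The last, linear term is harmless: by \eqref{ccconde} again it is bounded by $\varrho\,|\beta_{\Gamma,\varepsilon}(u_{\Gamma,\varepsilon})|_{L^1(\Gamma)}+c_0|\Gamma|$, hence controlled through the $L^1$ estimate \eqref{33}. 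Without the sign coincidence and the growth bound \eqref{ccconde}, this coercive $L^2(\Gamma)$ contribution could not be extracted and one would retain only the already available $L^1$ information; this is the sole place where assumption (A2) is genuinely used.

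It then remains to estimate the right-hand side by Young's inequality, splitting off $\tfrac12|\beta_\varepsilon(u_\varepsilon)|_H^2$ and $\tfrac{1}{2\varrho}|\beta_\varepsilon(u_{\Gamma,\varepsilon})|_{H_\Gamma}^2$ to be absorbed on the left. The remaining factors are bounded by functions in $L^2(0,T)$: indeed $|\mu_\varepsilon|_H\le|\mu_\varepsilon|_V\le\Lambda_3$ by \eqref{34-2}, the term $\tau|\partial_t u_\varepsilon|_H$ (absent when $\tau=0$) and $|\partial_t u_{\Gamma,\varepsilon}|_{H_\Gamma}$ are controlled by \eqref{lemma1}--\eqref{lemma1b}, the Lipschitz nonlinearities $\pi$, $\pi_\Gamma$ are handled via $|u_\varepsilon|_{L^\infty(0,T;V)}$ and $|u_{\Gamma,\varepsilon}|_{L^\infty(0,T;W_\Gamma)}$, and $g$, $g_\Gamma$ by assumption (A5). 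After absorption one arrives at an inequality of the form $|\beta_\varepsilon(u_\varepsilon(t))|_H^2+\tfrac{1}{2\varrho}|\beta_\varepsilon(u_{\Gamma,\varepsilon}(t))|_{H_\Gamma}^2\le C\,\Lambda_4(t)^2$ with $\Lambda_4$ bounded in $L^2(0,T)$, which, upon taking square roots, gives exactly \eqref{35}.
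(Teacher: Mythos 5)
Your proposal is correct and follows essentially the same route as the paper: testing \eqref{chd2e} by $\beta_\varepsilon(u_\varepsilon)$, eliminating $\partial_{\boldsymbol{\nu}}u_\varepsilon$ via \eqref{chd5e}, dropping the sign-definite gradient terms, and extracting the coercive $\tfrac{1}{\varrho}|\beta_\varepsilon(u_{\Gamma,\varepsilon})|_{H_\Gamma}^2$ from the cross term by the sign coincidence together with \eqref{ccconde}, then absorbing via Young and invoking Lemmas~3.1 and~3.4. The only cosmetic difference is that you control the leftover linear term $\tfrac{c_0}{\varrho}\int_\Gamma|\beta_\varepsilon(u_{\Gamma,\varepsilon})|\,d\Gamma$ through the $L^1$ bound \eqref{33}, whereas the paper simply absorbs it by Young into the quadratic term ($\tfrac{c_0}{\varrho}s\le\tfrac{1}{2\varrho}s^2+\tfrac{c_0^2}{2\varrho}$); both are equally valid.
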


\begin{proof}
We \pier{test (\ref{chd2e}) by $\beta_\varepsilon(u_\varepsilon)$ and exploit (\ref{chd5e}) to obtain}
\begin{align}
	& \int_\Omega \bigl| \beta_\varepsilon (u_\varepsilon)\bigr|^2dx + \int _\Omega \beta'_\varepsilon(u_\varepsilon) |\nabla u_\varepsilon|^2dx 
	\nonumber \\
	& \quad {} 
	+\int_\Gamma \beta_{\Gamma, \varepsilon}(u_{\Gamma, \varepsilon}) 
	\beta_\varepsilon(u_{\Gamma, \varepsilon}) d\Gamma
	+ \kappa \int_\Gamma \beta'_{\Gamma, \varepsilon}(u_{\Gamma, \varepsilon}) 
	|\nabla_\Gamma u_{\Gamma,\varepsilon} |^2 d\takeshi{\Gamma} 
	\nonumber \\
	& \le \int_\Omega \bigl( \mu_\varepsilon+g-\tau \partial _t u_\varepsilon -\pi(u_\varepsilon)\bigr) \beta_\varepsilon(u_\varepsilon) dx
	+\int_\Gamma \bigl( g_\Gamma-\partial _t u_{\Gamma,\varepsilon} -\pi_\Gamma(u_{\Gamma,\varepsilon})\bigr) 
	\beta_\varepsilon(u_{\Gamma,\varepsilon}) d\Gamma
	\label{35est}
\end{align}
a.e.\ in $(0,T)$, where we used that the trace of $\beta_\varepsilon(u_\varepsilon)$ is 
equal to $\beta_\varepsilon(u_{\Gamma,\varepsilon})$. 
In order to treat the gap between $\beta$ and $\beta_\Gamma$, \pier{we recall  (\ref{ccconde}) and observe that}
\begin{align*}
	\int_\Gamma \beta_{\Gamma,\varepsilon}(u_{\Gamma,\varepsilon}) 
	\beta_\varepsilon (u_{\Gamma,\varepsilon}) d\Gamma 
	& = 	\int_\Gamma 
	\bigl| \beta_{\Gamma,\varepsilon}(u_{\Gamma,\varepsilon}) \bigr|
	\bigl| \beta_\varepsilon(u_{\Gamma,\varepsilon}) \bigr| d\Gamma 
	\nonumber \\
	& \ge \frac{1}{\varrho}\int_\Gamma 
	\bigl| \beta_\varepsilon (u_{\Gamma,\varepsilon}) \bigr|^2 d\Gamma 
	- \frac{c_0}{\varrho} \int_\Gamma 
	\bigl| \beta_\varepsilon (u_{\Gamma,\varepsilon}) \bigr|d\Gamma 
	\nonumber \\
	& \ge \frac{1}{2\varrho} \int_\Gamma \bigl| \beta_\varepsilon(u_{\Gamma,\varepsilon})\bigr|^2 
	\takeshi{d \Gamma}	
	- \frac{c_0^{\,2}}{2\varrho}|\Gamma|
\end{align*}
a.e.\ in $(0,T)$, because $\beta_\varepsilon$ and $\beta_{\Gamma,\varepsilon}$ \pier{have} the same sign. 
Therefore, applying the {Y}oung inequality \pier{in} (\ref{35est}) we deduce that 
\begin{align*}
	& \bigl| \beta_\varepsilon (u_\varepsilon)\bigr|_{H} ^2
	+  \frac{1}{2\varrho}\bigl| \beta_\varepsilon(u_{\Gamma,\varepsilon})\bigr|_{H_\Gamma}^2
	\nonumber \\
	& \le \frac{5}{2} \bigl( | \mu_\varepsilon|_H^2 
	+|g|_H^2+\tau^2 |\partial _t u_\varepsilon|_H^2 + 
	L^2 | u_\varepsilon |_H^2 + \bigl| \pi(0) \bigr|^2 |\Omega| \bigr)
	+
	\frac{1}{2} \bigl| \beta_\varepsilon (u_\varepsilon)\bigr|_{H} ^2
	\nonumber \\
	&\quad  {}
	+ 4\varrho \bigl( |g_\Gamma|_{H_\Gamma}^{\takeshi{2}}
	+ 
	|\partial _t u_{\Gamma,\varepsilon}|_{H_\Gamma} ^{\pier 2}
	+ 
	L^2_\Gamma | u_{\Gamma,\varepsilon}|_{H_\Gamma}^2
	+\bigl| \pi_\Gamma(0) \bigr|^2 |\Gamma| \bigr) 
	+ \frac{1}{4\varrho} \bigl| \beta_\varepsilon(u_{\Gamma,\varepsilon}) \bigr| _{H_\Gamma}^2
\end{align*}
a.e.\ in $(0,T)$, that is, 
by virtue of Lemmas~3.1 and 3.4, there \pier{is} a function $\Lambda_4$, which is bounded in $L^2(0,T)$, 
such that (\ref{35}) holds.
\end{proof}

\begin{lemma} 
There exist functions $\Lambda_5$ and $\Lambda_6$, bounded in $L^2(0,T)$, such that
\begin{gather}
	\bigl| \Delta  u_\varepsilon (t) \bigr|_{H} \le \Lambda_5(t), 
	\label{36-1} \\
	\sqrt{\kappa} \bigl| \partial_{\boldsymbol{\nu}} u_\varepsilon (t)  \bigr|_{H_\Gamma} 
	+ \bigl| \partial_{\boldsymbol{\nu}} u_\varepsilon (t)  \bigr|_{W_\Gamma^*}  
	\le \Lambda_6(t)
	\label{36-2}
\end{gather}
for a.a.\ $t \in (0,T)$. 
\end{lemma}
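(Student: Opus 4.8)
The plan is to derive the two estimates in \eqref{36-1}--\eqref{36-2} directly from equation \eqref{chd2e}, rewritten as an elliptic problem for $u_\varepsilon(t)$, together with the boundary relation \eqref{chd5e} and the bounds already collected in Lemmas~3.1--3.5. First I would read \eqref{chd2e} as
\begin{equation*}
	-\Delta u_\varepsilon(t) = \mu_\varepsilon(t) - \tau \partial_t u_\varepsilon(t) - \beta_\varepsilon\bigl(u_\varepsilon(t)\bigr) - \pi\bigl(u_\varepsilon(t)\bigr) + g(t) \quad \text{a.e.\ in } \Omega,
\end{equation*}
for a.a.\ $t\in(0,T)$. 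The right-hand side is controlled in $H$ by combining \eqref{34-2} (for $\mu_\varepsilon$), the $\sqrt{\tau}$-weighted bound on $\partial_t u_\varepsilon$ from \eqref{lemma1}, the estimate \eqref{35} on $\beta_\varepsilon(u_\varepsilon)$, the Lipschitz bound on $\pi$ via (A3) together with \eqref{lemma1}--\eqref{lemma1b}, and assumption (A5) on $g$. This immediately produces a function $\Lambda_5$, bounded in $L^2(0,T)$, dominating $|\Delta u_\varepsilon(t)|_H$, which is \eqref{36-1}.

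For \eqref{36-2} the idea is to extract the normal derivative from the boundary condition \eqref{chd5e}. I would test the weak form of \eqref{chd5e} against an arbitrary $z_\Gamma \in W_\Gamma$, writing
\begin{equation*}
	\langle \partial_{\boldsymbol{\nu}} u_\varepsilon, z_\Gamma \rangle_{W_\Gamma^*, W_\Gamma} = \int_\Gamma \bigl( g_\Gamma - \partial_t u_{\Gamma,\varepsilon} - \beta_{\Gamma,\varepsilon}(u_{\Gamma,\varepsilon}) - \pi_\Gamma(u_{\Gamma,\varepsilon}) \bigr) z_\Gamma\, d\Gamma + \kappa \int_\Gamma \nabla_\Gamma u_{\Gamma,\varepsilon} \cdot \nabla_\Gamma z_\Gamma\, d\Gamma,
\end{equation*}
for a.a.\ $t\in(0,T)$. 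Each of the four zeroth-order boundary terms is bounded in $H_\Gamma$ (hence in $W_\Gamma^*$ by the embedding $H_\Gamma \hookrightarrow W_\Gamma^*$) using \eqref{lemma1}, \eqref{35}, (A3), and (A5), while the surface-gradient term is estimated by $\kappa\,|\nabla_\Gamma u_{\Gamma,\varepsilon}|_{H_\Gamma}\,|z_\Gamma|_{V_\Gamma}$ and controlled via the $\sqrt{\kappa}$-weighted bound in \eqref{lemma1}; note that $\kappa|\nabla_\Gamma u_{\Gamma,\varepsilon}|_{H_\Gamma} = \sqrt{\kappa}\cdot\sqrt{\kappa}|\nabla_\Gamma u_{\Gamma,\varepsilon}|_{H_\Gamma} \le \sqrt{\kappa}\,M_1$ since $\kappa\le 1$. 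Taking the supremum over $z_\Gamma$ with $|z_\Gamma|_{W_\Gamma}\le 1$ yields the $W_\Gamma^*$-bound on $\partial_{\boldsymbol{\nu}} u_\varepsilon$ appearing in \eqref{36-2}.

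For the remaining piece $\sqrt{\kappa}\,|\partial_{\boldsymbol{\nu}} u_\varepsilon(t)|_{H_\Gamma}$, I would argue that once \eqref{36-1} gives $\Delta u_\varepsilon(t)\in H$ and the trace $u_\Gamma = u_{\varepsilon|_\Gamma}$ lies in $V_\Gamma$ (with the $\sqrt{\kappa}$-weighted control of its $V_\Gamma$-norm from \eqref{lemma1}), elliptic regularity for the problem $-\Delta u_\varepsilon(t)=\tilde g(t)$ in $\Omega$ with Dirichlet datum $u_{\Gamma,\varepsilon}(t)$ promotes $u_\varepsilon(t)$ to $H^2(\Omega)$ and makes $\partial_{\boldsymbol{\nu}} u_\varepsilon(t)$ a genuine $H_\Gamma$-function; the natural $H_\Gamma$-estimate of the normal trace then carries the factor $\sqrt{\kappa}$ inherited from $u_{\Gamma,\varepsilon}$, giving the first term in \eqref{36-2}. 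The main obstacle I anticipate is precisely this dichotomy: the $W_\Gamma^*$-bound is \emph{uniform} in $\kappa$, whereas the $H_\Gamma$-regularity of $\partial_{\boldsymbol{\nu}} u_\varepsilon$ is only available with the degenerating weight $\sqrt{\kappa}$, so the two estimates must be obtained by genuinely different mechanisms—duality against $W_\Gamma$ for the robust bound, and elliptic $H^2$-regularity for the weighted one—and one must be careful to track the $\kappa$-dependence honestly through the surface-diffusion term rather than absorbing it prematurely.
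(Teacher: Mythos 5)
Your estimate \eqref{36-1} and the $\sqrt{\kappa}$-weighted part of \eqref{36-2} are essentially the paper's own argument: comparison in \eqref{chd2e} for the Laplacian, and elliptic regularity for the Dirichlet problem with datum $u_{\Gamma,\varepsilon}(t)$ for the weighted normal-derivative bound. One correction there: with boundary datum only in $V_\Gamma=H^1(\Gamma)$ you obtain $u_\varepsilon(t)\in H^{3/2}(\Omega)$, not $H^2(\Omega)$; this is still enough, since a function in $H^{3/2}(\Omega)$ whose Laplacian is in $H$ has normal derivative in $H_\Gamma$.

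The genuine gap is in your derivation of the unweighted bound on $\bigl|\partial_{\boldsymbol{\nu}} u_\varepsilon(t)\bigr|_{W_\Gamma^*}$ from the boundary equation \eqref{chd5e}. That computation needs the term $\beta_{\Gamma,\varepsilon}(u_{\Gamma,\varepsilon})$ to be bounded in $H_\Gamma$ (or at least in $W_\Gamma^*$) uniformly in $\varepsilon$ and $\kappa$, and you invoke \eqref{35} for this. But \eqref{35} controls $\beta_\varepsilon(u_{\Gamma,\varepsilon})$ --- the Yosida approximation of the \emph{bulk} graph evaluated at the trace --- not $\beta_{\Gamma,\varepsilon}(u_{\Gamma,\varepsilon})$. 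Under (A2) it is the boundary graph that dominates the bulk one, $|\beta_\varepsilon|\le\varrho\,|\beta_{\Gamma,\varepsilon}|+c_0$, so no control of $\beta_{\Gamma,\varepsilon}(u_{\Gamma,\varepsilon})$ follows from \eqref{35}; the only information available at this stage is the $L^1(\Gamma)$ bound \eqref{33}, which does not embed into $W_\Gamma^*=H^{-1/2}(\Gamma)$. In the paper, the uniform $W_\Gamma^*$ bound on $\beta_{\Gamma,\varepsilon}(u_{\Gamma,\varepsilon})$ is estimate \eqref{37-5} of Lemma~3.7, which is itself deduced \emph{from} \eqref{36-2}, so your argument is circular; indeed, the unavailability of a uniform $H_\Gamma$ bound on $\beta_{\Gamma,\varepsilon}(u_{\Gamma,\varepsilon})$ under (A2) alone is exactly why the limit $\xi_\Gamma$ lives only in $W_\Gamma^*$ and why Section~5 requires the stronger (A2)$^\prime$ to improve this. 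A second defect of the same computation: you estimate the surface-diffusion pairing $\kappa\int_\Gamma\nabla_\Gamma u_{\Gamma,\varepsilon}\cdot\nabla_\Gamma z_\Gamma\,d\Gamma$ through $|z_\Gamma|_{V_\Gamma}$, which is not controlled by $|z_\Gamma|_{W_\Gamma}$, so taking the supremum over $|z_\Gamma|_{W_\Gamma}\le 1$ is not legitimate and at best yields a $V_\Gamma^*$ bound. The repair is the paper's route: the unweighted bound does not come from the boundary equation at all, but from the bulk, via the generalized trace theorem \cite[Theorem~2.27, p.~1.64]{BG87}: for $u_\varepsilon(t)\in V$ with $\Delta u_\varepsilon(t)\in H$ one has $\partial_{\boldsymbol{\nu}} u_\varepsilon(t)\in W_\Gamma^*$ with $\bigl|\partial_{\boldsymbol{\nu}} u_\varepsilon(t)\bigr|_{W_\Gamma^*}\le C\bigl(\bigl|u_\varepsilon(t)\bigr|_V+\bigl|\Delta u_\varepsilon(t)\bigr|_H\bigr)$, and both quantities on the right-hand side are uniformly bounded by \eqref{lemma1} (or \eqref{lemma1b} if $\tau=0$) and \eqref{36-1}.
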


\begin{proof}
We write the equation (\ref{chd2e}) as 
\begin{equation}
	-\Delta u_\varepsilon = \mu_\varepsilon +g - \tau \partial_t u_\varepsilon- \beta_\varepsilon(u_\varepsilon) 
	- \pi (u_\varepsilon)
	\quad \text{a.e.\ in }Q, 
	\label{2re}
\end{equation}
and observe that, by a comparison in (\ref{2re}) and recalling (\ref{34-2}), (A5), (\ref{lemma1}) (or (\ref{lemma1b}) if $\tau=0$), and (\ref{35}), 
we deduce that there \pier{is} a function $\Lambda_5$, which is bounded in $L^2(0,T)$, such that 
(\ref{36-1}) holds. Next, 
we recall \cite[Theorem~3.2, p.~1.79]{BG87} in order to \pier{claim} that there exists a positive constant 
$C_5$ such that
\begin{equation*}
	\bigl| u_\varepsilon(t) \bigr|_{H^{3/2}(\Omega)} 
	\le C_5 \bigl\{ \bigl| u_{\Gamma,\varepsilon} (t) \bigr|_{V_\Gamma} 
	+ \bigl| \Delta u_\varepsilon(t) \bigr|_H \bigr\}
\end{equation*}
for a.a.\ $t \in (0,T)$, whence we can exploit (\ref{lemma1}) (or (\ref{lemma1b}) if $\tau=0$) and (\ref{36-1}) \pier{along with 
\cite[Theorem~2.27, p.~1.64]{BG87} to derive the estimate  (\ref{36-2}). Concerning (\ref{36-2}), we remark that the coefficient $\sqrt{\kappa}$ is only in the first term since the control of  $\takeshi{|} \partial_{\boldsymbol{\nu}} u_\varepsilon (t) \takeshi{|}_{W_\Gamma^*}$ just needs the bound of $ \takeshi{|} u_\varepsilon(t) \takeshi{|}_{V}$ and \eqref{36-1}.}
\end{proof}

\begin{lemma} 
There exist functions $\Lambda_7$, $\Lambda_8$, $\Lambda_{10}$,  \pier{$\Lambda_{11}$,  which are bounded in $L^2(0,T)$, 
and  $\Lambda_9$,} which is bounded in $L^\infty(0,T)$, such that 
\begin{gather}
	\sqrt{\kappa} \bigl| \beta_{\Gamma,\varepsilon} \bigl( u_{\Gamma,\varepsilon} (t) \bigr) \bigr|_{H_\Gamma} \le \Lambda_7(t), 
	\label{37-1} \\
	\kappa^{3/2} \bigl| \Delta_\Gamma u_{\Gamma,\varepsilon} (t)  \bigr|_{H_\Gamma}  
	\le \Lambda_8(t),
	\label{37-2} \\
	\sqrt{\kappa} \bigl| \Delta_\Gamma u_{\Gamma,\varepsilon} (t)  \bigr|_{V_\Gamma^*} 
	\le \Lambda_9(t),
	\label{37-3} \\
	\pier{\kappa} \bigl| \Delta_\Gamma u_{\Gamma,\varepsilon} (t)  \bigr|_{W_\Gamma^*}  
	\le \Lambda_{10}(t),
	\label{37-4} \\
	\bigl| \beta_{\Gamma,\varepsilon} \bigl( u_{\Gamma,\varepsilon} (t) \bigr) \bigr|_{W_\Gamma^*}
	\le \Lambda_{11}(t)
	\label{37-5} 
\end{gather}
for a.a.\ $t \in (0,T)$. 
\end{lemma}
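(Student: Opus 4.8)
The plan is to derive each of the five bounds by testing or making comparisons in the approximate boundary condition \eqref{chd5e}, exploiting the monotonicity of $\beta_{\Gamma,\varepsilon}$ together with the previously established estimates \eqref{lemma1}/\eqref{lemma1b} and \eqref{36-2}, and to treat the inequalities in the order \eqref{37-1}, \eqref{37-2}, \eqref{37-3}, \eqref{37-4}, \eqref{37-5}, since the later ones rely on the earlier. All computations are carried out pointwise for a.a.\ $t\in(0,T)$, which is exactly what produces the claimed time functions $\Lambda_j$.

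For \eqref{37-1} I would test \eqref{chd5e} by $\beta_{\Gamma,\varepsilon}(u_{\Gamma,\varepsilon}(t))$ on $\Gamma$. Since $u_{\Gamma,\varepsilon}(t)\in H^2(\Gamma)$ and $\beta_{\Gamma,\varepsilon}$ is Lipschitz and nondecreasing, an integration by parts on the closed manifold $\Gamma$ gives $-\kappa\int_\Gamma \Delta_\Gamma u_{\Gamma,\varepsilon}\,\beta_{\Gamma,\varepsilon}(u_{\Gamma,\varepsilon})\,d\Gamma=\kappa\int_\Gamma \beta'_{\Gamma,\varepsilon}(u_{\Gamma,\varepsilon})|\nabla_\Gamma u_{\Gamma,\varepsilon}|^2\,d\Gamma\ge 0$, a term that can be discarded after moving it to the left. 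What remains on the left is $|\beta_{\Gamma,\varepsilon}(u_{\Gamma,\varepsilon}(t))|_{H_\Gamma}^2$, while by Cauchy--Schwarz the right-hand side is bounded by $(|\partial_t u_{\Gamma,\varepsilon}|_{H_\Gamma}+|\partial_{\boldsymbol{\nu}} u_\varepsilon|_{H_\Gamma}+|\pi_\Gamma(u_{\Gamma,\varepsilon})|_{H_\Gamma}+|g_\Gamma|_{H_\Gamma})\,|\beta_{\Gamma,\varepsilon}(u_{\Gamma,\varepsilon})|_{H_\Gamma}$. Dividing once and multiplying by $\sqrt{\kappa}$, the only term needing care is $\sqrt{\kappa}|\partial_{\boldsymbol{\nu}} u_\varepsilon|_{H_\Gamma}$, which is controlled by $\Lambda_6$ via \eqref{36-2}; the remaining terms carry the harmless factor $\sqrt{\kappa}\le 1$ and are bounded in $L^2(0,T)$ by \eqref{lemma1}/\eqref{lemma1b}, (A3) and (A5). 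This produces $\Lambda_7$.

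For \eqref{37-2} and \eqref{37-3} I proceed by comparison. Solving \eqref{chd5e} for $\kappa\Delta_\Gamma u_{\Gamma,\varepsilon}$, taking the $H_\Gamma$-norm and multiplying by $\sqrt{\kappa}$, every summand acquires a factor $\sqrt{\kappa}$; using \eqref{37-1} for $\sqrt{\kappa}|\beta_{\Gamma,\varepsilon}(u_{\Gamma,\varepsilon})|_{H_\Gamma}$ and \eqref{36-2} for $\sqrt{\kappa}|\partial_{\boldsymbol{\nu}} u_\varepsilon|_{H_\Gamma}$ yields $\Lambda_8$. For \eqref{37-3} I simply use $|\Delta_\Gamma z|_{V_\Gamma^*}\le|\nabla_\Gamma z|_{H_\Gamma}$ for $z\in V_\Gamma$, so that $\sqrt{\kappa}|\Delta_\Gamma u_{\Gamma,\varepsilon}|_{V_\Gamma^*}\le\sqrt{\kappa}|u_{\Gamma,\varepsilon}|_{V_\Gamma}$, which is bounded in $L^\infty(0,T)$ directly by the term $\sqrt{\kappa}|u_{\Gamma,\varepsilon}|_{L^\infty(0,T;V_\Gamma)}$ in \eqref{lemma1}/\eqref{lemma1b}; this gives $\Lambda_9$ bounded in $L^\infty(0,T)$.

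The delicate point is \eqref{37-4}, where the norm of $W_\Gamma^*=H^{-1/2}(\Gamma)$ must be reached. I would not use the equation directly but interpolate: since $W_\Gamma^*=[H_\Gamma,V_\Gamma^*]_{1/2}$, there is $C>0$ with $|v|_{W_\Gamma^*}\le C|v|_{H_\Gamma}^{1/2}|v|_{V_\Gamma^*}^{1/2}$. Applying this to $v=\Delta_\Gamma u_{\Gamma,\varepsilon}$ and splitting the weight as $\kappa=(\kappa^{3/2})^{1/2}(\kappa^{1/2})^{1/2}$ gives $\kappa|\Delta_\Gamma u_{\Gamma,\varepsilon}|_{W_\Gamma^*}\le C\bigl(\kappa^{3/2}|\Delta_\Gamma u_{\Gamma,\varepsilon}|_{H_\Gamma}\bigr)^{1/2}\bigl(\sqrt{\kappa}|\Delta_\Gamma u_{\Gamma,\varepsilon}|_{V_\Gamma^*}\bigr)^{1/2}\le C\Lambda_8^{1/2}\Lambda_9^{1/2}=:\Lambda_{10}$; since $\Lambda_8\in L^2(0,T)$ and $\Lambda_9\in L^\infty(0,T)$, the product $\Lambda_8^{1/2}\Lambda_9^{1/2}$ lies in $L^4(0,T)\subset L^2(0,T)$. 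Finally, \eqref{37-5} follows by one more comparison: solving \eqref{chd5e} for $\beta_{\Gamma,\varepsilon}(u_{\Gamma,\varepsilon})$ and estimating in $W_\Gamma^*$, each of $|\partial_t u_{\Gamma,\varepsilon}|_{W_\Gamma^*}$, $|\pi_\Gamma(u_{\Gamma,\varepsilon})|_{W_\Gamma^*}$, $|g_\Gamma|_{W_\Gamma^*}$ is dominated through $H_\Gamma\hookrightarrow W_\Gamma^*$ by the corresponding $H_\Gamma$-norm, $|\partial_{\boldsymbol{\nu}} u_\varepsilon|_{W_\Gamma^*}$ by $\Lambda_6$, and $\kappa|\Delta_\Gamma u_{\Gamma,\varepsilon}|_{W_\Gamma^*}$ by the just-obtained $\Lambda_{10}$, giving $\Lambda_{11}$. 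I expect the main obstacle to be precisely the bookkeeping of the $\kappa$-powers in the interpolation step for \eqref{37-4}: this is what forces the weight there to be exactly $\kappa$ and, correspondingly, the $L^\infty(0,T)$-boundedness (rather than merely $L^2$) of $\Lambda_9$ in \eqref{37-3}.
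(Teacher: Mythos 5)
Your proposal is correct and follows essentially the same route as the paper: testing \eqref{chd5e} by $\beta_{\Gamma,\varepsilon}(u_{\Gamma,\varepsilon})$ and discarding the monotone gradient term for \eqref{37-1}, comparison in $H_\Gamma$ after multiplying by $\sqrt{\kappa}$ for \eqref{37-2}, boundedness of $\Delta_\Gamma:V_\Gamma\to V_\Gamma^*$ for \eqref{37-3}, the interpolation inequality $|v|_{W_\Gamma^*}\le C|v|_{H_\Gamma}^{1/2}|v|_{V_\Gamma^*}^{1/2}$ combined with the $\kappa$-splitting for \eqref{37-4}, and a final comparison in $W_\Gamma^*$ for \eqref{37-5}. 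Even your observation that $\Lambda_{10}$ is actually bounded in $L^4(0,T)$ matches the paper's parenthetical remark.
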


\begin{proof}
We write (\ref{chd5e}) as 
\begin{equation*}
	-\kappa \Delta_\Gamma u_{\Gamma,\varepsilon}+\beta_{\Gamma,\varepsilon}(u_{\Gamma,\varepsilon}) 
	= g_\Gamma -\partial_t u_{\Gamma,\varepsilon}-\partial _{\boldsymbol{\nu}}u_\varepsilon-\pi_\Gamma (u_{\Gamma,\varepsilon})
	\quad \text{a.e.\ on } \Sigma.
\end{equation*}
Multiplying it by $\beta_{\Gamma, \varepsilon}(u_{\Gamma,\varepsilon})$ \pier{and integrating over $\Gamma$, we}  obtain 
\begin{align}
	& \kappa \int_\Gamma \beta_{\Gamma,\varepsilon}'(u_{\Gamma,\varepsilon}) 
	|\nabla_\Gamma u_{\Gamma,\varepsilon} |^2 d\Gamma
	+ \bigl| \beta_{\Gamma,\varepsilon} (u_{\Gamma,\varepsilon}) \bigr|_{H_\Gamma}^2 
	\nonumber \\
	& \le 
	\bigl| g_\Gamma -\partial_t u_{\Gamma,\varepsilon}-\pi_\Gamma (u_{\Gamma,\varepsilon})\bigr|_{H_\Gamma}
	\bigl| \beta_{\Gamma,\varepsilon} (u_{\Gamma,\varepsilon}) \bigr|_{H_\Gamma}
	+
	| \partial _{\boldsymbol{\nu}} u_\varepsilon |_{H_\Gamma}
	\bigl| \beta_{\Gamma,\varepsilon} (u_{\Gamma,\varepsilon}) \bigr|_{H_\Gamma}
\end{align}
a.e.\ in $(0,T)$. 
Next, using the {Y}oung inequality in both terms of the right hand side, \pier{we} find that 
\begin{equation*}
	\frac{1}{2}
	\bigl| \beta_{\Gamma,\varepsilon} (u_{\Gamma,\varepsilon}) \bigr|_{H_\Gamma}^2
	\le \bigl| g_\Gamma -\partial_t u_{\Gamma,\varepsilon}-\pi_\Gamma (u_{\Gamma,\varepsilon})
	\bigr|_{H_\Gamma}^2
	+
	| \partial _{\boldsymbol{\nu}} u_\varepsilon |_{H_\Gamma}^2
\end{equation*}
whence \pier{(A5),} (\ref{lemma1}) (or (\ref{lemma1b}) if $\tau=0$)\pier{, (A3)} and (\ref{36-2}) enable us to deduce that 
\begin{align*}
	\sqrt{\kappa}
	\bigl| \beta_{\Gamma,\varepsilon} \bigl( u_{\Gamma,\varepsilon} (t)\bigr) \bigr|_{H_\Gamma}
	& \le \sqrt{2} \left( \bigl| g_\Gamma(t) \bigr|_{H_\Gamma}
	+  \bigl| \partial_t u_{\Gamma,\varepsilon}(t) \bigr|_{H_\Gamma}
	+  \bigl| \pi_\Gamma \bigl(u_{\Gamma,\varepsilon}(t) \bigr)
	\bigr|_{H_\Gamma}
	+\sqrt{\kappa}
	\bigl| \partial _{\boldsymbol{\nu}} u_\varepsilon(t) \bigr|_{H_\Gamma} \right) 
\end{align*}
for a.a.\ $t \in (0,T)$, \pier{as $0 < \kappa \leq 1$ in our setting. The above right-hand side is 
uniformly bounded in $L^2(0,T)$, then there is a function $\Lambda_7$ such that \eqref{37-1} holds.}

\pier{Next,} multiplying (\ref{chd5e}) by $\sqrt{\kappa}$ and comparing the terms, we obtain 
\begin{align*}
	\kappa^{3/2}  \bigl| \Delta_\Gamma u_{\Gamma,\varepsilon} (t)  \bigr|_{H_\Gamma}  
	& \le \sqrt{\kappa} \bigl|  \beta_{\Gamma,\varepsilon} \bigl( u_{\Gamma,\varepsilon} (t)\bigr)
	\bigr|_{H_\Gamma}  
	+ \bigl| 
	 g_\Gamma(t) 
	 \bigr|_{H_\Gamma} 
	 + \bigl| \partial_t 
	u_{\Gamma,\varepsilon} (t)  \bigr|_{H_\Gamma}  
	\nonumber \\
	& \quad  {} 
	+ \sqrt{\kappa} \bigl| 
	\partial _{\boldsymbol{\nu}} u_{\varepsilon} (t)  \bigr|_{H_\Gamma}
	+
	\bigl|  \pi_{\Gamma} \bigl( u_{\Gamma,\varepsilon} (t)\bigr)
	\bigr|_{H_\Gamma}  
	\nonumber \\
	& \le \Lambda_8(t)
\end{align*}
for a.a.\ $t \in (0,T)$, 
where $\Lambda_8$ is bounded in $L^2(0,T)$. 
Moreover, since $\Delta_\Gamma$ is a linear and bounded operator from $V_\Gamma$ to 
$V_\Gamma^*$, then from  (\ref{lemma1}) (or (\ref{lemma1b}) if $\tau=0$) it follows that 
\begin{equation*}
	\sqrt{\kappa} \bigl| \Delta_\Gamma u_{\Gamma, \varepsilon} (t)\bigr|_{V_\Gamma^*} 
	\le \sqrt{\kappa} C_6 \, \bigl|  u_{\Gamma, \varepsilon} (t)\bigr|_{V_\Gamma} 
	\le \Lambda_9(t)
\end{equation*}
for a.a.\ $t \in (0,T)$, 
where $C_6$ is a positive constant and $\Lambda_9$ is bounded in $L^\infty (0,T)$. 
Then, by interpolation 
$H_\Gamma \mathop{\hookrightarrow} W_\Gamma^* \mathop{\hookrightarrow} V_\Gamma^*$, 
there exists a positive constant $C_7$ such that 
\begin{equation*}
	| \kappa \Delta_\Gamma u_{\Gamma,\varepsilon} |_{W_\Gamma^*}
	\le C_7 \bigl| \kappa^{3/2} \Delta_\Gamma u_{\Gamma,\varepsilon} \bigr|_{H_\Gamma}^{1/2}
	\bigr| \sqrt{\kappa} \Delta_\Gamma u_{\Gamma,\varepsilon} \bigr|_{V_\Gamma^*}^{1/2}
\end{equation*}
a.e.\ on $(0,T)$. 
Therefore, we can find \pier{a function $\Lambda_{10}$, depending on 
$\Lambda_8$ and $\Lambda_9$,  which is bounded in $L^2 (0,T)$ (actually, up to $L^4(0,T)$) such that (\ref{37-4})} holds. 
Consequently, we recall \pier{(\ref{37-4})}, (A5),  (\ref{lemma1}) (or (\ref{lemma1b}) if $\tau=0$), (\ref{36-2})
and make a comparison of terms in (\ref{chd5e}) 
to deduce that 
\begin{align*}
	\bigl| \beta_{\Gamma,\varepsilon} \bigl( u_{\Gamma,\varepsilon} (t)\bigr) \bigr|_{W_\Gamma^*}
	& \le 
	\bigl| \kappa \Delta_\Gamma u_{\Gamma,\varepsilon}(t) \bigr|_{W_\Gamma^*}
	+
	\bigl| g_\Gamma(t) \bigr|_{W_\Gamma^*}
	+  \bigl| \partial_t u_{\Gamma,\varepsilon}(t) \bigr|_{W_\Gamma^*}
	\nonumber \\
	& \quad {}
	+  \bigl| \pi_\Gamma \bigl(u_{\Gamma,\varepsilon}(t) \bigr)
	\bigr|_{W_\Gamma^*}
	+
	\bigl| \partial _{\boldsymbol{\nu}} u_\varepsilon(t) \bigr|_{W_\Gamma^*}
	\nonumber \\
	& \le \Lambda_{11} (t)
\end{align*}
for a.a.\ $t \in (0,T)$, 
\pier{with  $\Lambda_{11}$ bounded in $L^2(0,T)$  so that (\ref{37-5}) holds.}
\end{proof}

\subsection{Proof of Proposition~2.1}
\pier{For the proof of the complete statement we refer to \cite{CGS14, CF15b}. 
Here we just detail the limit procedure
as $\varepsilon \searrow 0$, so as to show existence of the solution. 
Of course, we keep $\kappa \in (0,1]$ fixed and let the triplet $(u_\varepsilon, \mu_\varepsilon, u_{\Gamma,\varepsilon})$
solve \eqref{chd1e}--\eqref{chd6e} for $\varepsilon >0$.} By virtue of Lemmas~3.1 \pier{and~3.4--3.7, there exist some limit functions 
$u_\kappa, \mu_\kappa, u_{\Gamma, \kappa}, \xi_\kappa, \xi_{\Gamma, \kappa}$ such that
\begin{align*}
	u_\varepsilon \to u_\kappa 
	& \quad \text{weakly star in } H^1(0,T;V^*) \cap L^\infty(0,T;V), \\
	\sqrt{\tau} u_\varepsilon \to \sqrt{\tau} u_\kappa 	
	& \quad \text{weakly  in } H^1(0,T;H), \\
		\mu_\varepsilon \to \mu_\kappa 
	&  \quad \text{weakly in } L^2(0,T;V), \\
	u_{\Gamma,\varepsilon} \to u_{\Gamma, \kappa}
	&  \quad \text{weakly star in } H^1(0,T;H_\Gamma) \cap L^\infty ( 0,T;V_\Gamma ), \\
	\Delta u_\varepsilon \to \Delta u_\kappa
	& \quad \text{weakly in } L^2(0,T;H), \\
	\Delta_\Gamma u_{\Gamma,\varepsilon} \to \Delta_\Gamma u_{\Gamma, \kappa}
	& \quad \text{weakly in } L^2(0,T;H_\Gamma), \\
	\partial_{\boldsymbol{\nu}} u_\varepsilon \to \partial_{\boldsymbol{\nu}} u_\kappa 
	& \quad \text{weakly in } L^2(0,T;H_\Gamma), \\
	\beta_\varepsilon( u_\varepsilon) \to \xi_\kappa 
	& \quad \text{weakly in } L^2(0,T;H), \\
	\beta_{\Gamma,\varepsilon}(u_{\Gamma,\varepsilon}) \to \xi_{\Gamma, \kappa} 
	& \quad \text{weakly in } L^2(0,T;H_\Gamma)
\end{align*}
as $\varepsilon \searrow 0$, in principle for a subsequence, then for the entire family due to the uniqueness of the limits}.
Moreover, from a \pier{well-known} compactness theorem (see, e.g.\ \cite[Section~8, Corollary~4]{Sim87}), 
we obtain
\begin{align*}
	u_\varepsilon \to u_\kappa 
	& \quad \text{strongly in } C\bigl( [0,T];H \bigr) \cap L^2(0,T;V), \\
	u_{\Gamma,\varepsilon} \to u_{\Gamma, \kappa} 
	& \quad \text{strongly in } C\bigl( [0,T];H_\Gamma \bigr) \cap L^2(0,T;V_\Gamma)
\end{align*}
as $\varepsilon \searrow 0$. This and the {L}ipschitz continuities of $\pi$ and $\pi_\Gamma$ give us 
\begin{align*}
	\pi(u_\varepsilon) \to \pi(u_\kappa) 
	& \quad \text{strongly in } C\bigl( [0,T];H \bigr), \\
	\pi_\Gamma(u_{\Gamma,\varepsilon}) \to \pi_\Gamma(u_{\Gamma, \kappa}) 
	& \quad \text{strongly in } C\bigl( [0,T];H_\Gamma \bigr)
\end{align*}
as $\varepsilon \searrow 0$. 
Now, applying \cite[Proposition~2.2, p.~38]{Bar10} \pier{it is straightforward} deduce that 
\begin{equation*}
	\xi_\kappa \in \beta(u_\kappa) \quad \text{a.e.\ in } Q, \quad 
	\xi_\Gamma \in \beta_\Gamma(u_{\Gamma, \kappa}) \quad \text{a.e.\ on } \Sigma.
\end{equation*}
Therefore, passing to the limit as $\varepsilon \searrow 0$ in (\ref{chd1e})--(\ref{chd6e}), 
\pier{we infer that the limit quintuplet  $(u_\kappa, \mu_\kappa, \xi_\kappa,  u_{\Gamma, \kappa}, \xi_{\Gamma, \kappa})$ solves
(\ref{chd1})--(\ref{chd6}) and thus conclude the existence proof.} \hfill $\Box$

\section{Proof of main theorems}
\setcounter{equation}{0}

\pier{In view of Proposition~\ref{prop} and with the aim of summarizing the previous section, we see that 
for  $\kappa \in (0,1]$ the above limit $(u_\kappa, \mu_\kappa, \xi_\kappa,  u_{\Gamma, \kappa}, \xi_{\Gamma, \kappa})$ satisfies}
\begin{align*}
	& u_\kappa \in H^1(0,T;V^*)\cap L^\infty (0,T;V) \cap L^2\bigl( 0,T;H^2(\Omega) \bigr), \\
	& \tau u_\kappa \in H^1(0,T;H) \cap C\bigl( [0,T]; V\bigr), \\
	& \mu_\kappa \in L^2(0,T;V), \quad\xi_\kappa \in L^2(0,T;H), \\
	& u_{\Gamma,\kappa} \in H^1(0,T;H_\Gamma )\cap C \bigl( [0,T];V_\Gamma \bigr) \cap L^2 \bigl( 0,T;H^2(\Gamma) \bigr), \\
	& \xi_{\Gamma,\kappa} \in L^2(0,T;H_\Gamma)
\end{align*}
and solves
\begin{align} 
	& \partial_t u_\kappa - \Delta \mu_\kappa =0 
	\quad \text{a.e.\ in }Q, 
	\label{chd1k}
	\\
	&\mu_\kappa = \tau \partial_t u_\kappa -\Delta u_\kappa 
	+ \xi_\kappa + \pi (u_\kappa)-g, \quad \xi_\kappa \in \beta(u_\kappa)
	\quad \text{a.e.\ in }Q, 
	\label{chd2k}
	\\
	& \partial_{\boldsymbol{\nu}} \mu_\kappa =0 
	\quad  \text{a.e.\ on }\Sigma, 
	\label{chd3k}
	\\
	& (u_{\kappa})_{|_{\Gamma}} =u_{\Gamma,\kappa} 
	\quad  \text{a.e.\ on }\Sigma,
	\label{chd4k}
	\\ 
	&\partial_t u_{\Gamma,\kappa}+ \partial_{\boldsymbol{\nu}} u_\kappa 
	- \kappa \Delta_\Gamma u_{\Gamma,\kappa} 
	+ \xi_{\Gamma,\kappa} 
	+ \pi_\Gamma(u_{\Gamma, \kappa})= g_\Gamma, \quad 
	\xi_{\Gamma, \kappa} \in \beta_{\Gamma, \kappa} (u_{\Gamma,\kappa}) 
	\quad  \text{a.e.\ on }\Sigma, 
	\label{chd5k}
	\\
	& u_\kappa (0)=u_0 \quad \text{a.e.\ in } \Omega, 
	\quad u_{\Gamma,\kappa}(0)=u_{0\Gamma} \quad \text{a.e.\ on }\Gamma.
	\label{chd6k}
\end{align}
\pier{If $\tau=0$, then (\ref{chd1k}) and  (\ref{chd3k}) are replaced by the variational equality}
\begin{equation}
	\bigl \langle \partial_t u_\kappa (t), z \bigr \rangle_{V^*,V} 
	+ \int_\Omega \nabla \mu_\kappa(t)\cdot \nabla z dx =0
	\quad \ \pier{\text{for all } z \in V, \ \, \hbox{for a.a.\ $t \in (0,T)$}}.   
	\label{vf}
\end{equation}
\pier{Moreover, owing to Lemma~3.1 and the weak and weak star lower semicontinuity of norms we have the following uniform estimates: 
if $\tau > 0$, then from \eqref{lemma1} it follows that
%
%
\begin{align}
	& | u_\kappa |_{H^1(0,T;V^*) } +
	| u_\kappa |_{L^\infty(0,T;V)}+
	\sqrt{\tau} | \partial_t u_\kappa |_{L^2(0,T;H)} \nonumber\\
	&+
	| u_{\Gamma, \kappa} |_{H^1(0,T;H_\Gamma)} +
	| u_{\Gamma, \kappa} |_{L^\infty(0,T;W_\Gamma)}+
	\sqrt{\kappa} | u_{\Gamma, \kappa} |_{L^\infty(0,T;V_\Gamma)}
	\le M_1(\tau),
	 \label{e1}
\end{align}
while in the case $\tau =0$, then  by \eqref{lemma1b} we have that
%
%
\begin{align}
	& | u_\kappa |_{H^1(0,T;V^*) } +
	| u_\kappa |_{L^\infty(0,T;V)}+
	| u_{\Gamma, \kappa} |_{H^1(0,T;H_\Gamma)} 	 \nonumber\\
	&+
	| u_{\Gamma, \kappa} |_{L^\infty(0,T;W_\Gamma)}+
	\sqrt{\kappa} | u_{\Gamma, \kappa} |_{L^\infty(0,T;V_\Gamma)}
	\le M_2.
	 \label{pier12}
\end{align}
Furthermore, in both cases} from (\ref{34-2}), (\ref{35}), (\ref{36-1}), (\ref{36-2}), and (\ref{37-5}) we obtain 
\begin{gather}
	 | \mu_\kappa |_{L^2 (0,T;V)} \le \liminf_{\varepsilon \to 0} | \mu_\varepsilon |_{L^2 (0,T;V)} 
	 \le |\Lambda_3|_{L^2(0,T)}, \\
	| \xi_\kappa |_{L^2(0,T;H)} \le \liminf_{\varepsilon \to 0} 
	\bigl| \beta_\varepsilon( u_\varepsilon) \bigr|_{L^2(0,T;H)}
	\le |\Lambda_4|_{L^2(0,T)}, \\
	| \Delta u_{\kappa} |_{L^2(0,T;H)}
	\le \liminf_{\varepsilon \to 0} | \Delta u_{\varepsilon} |_{L^2(0,T;H)}
	\le  |\Lambda_5|_{L^2(0,T)}, \\
	| \partial_{\boldsymbol{\nu}} u_{\kappa} |_{L^2(0,T;W_\Gamma^*)}
	\le \liminf_{\varepsilon \to 0} | \partial_{\boldsymbol{\nu}} u_{\varepsilon} |_{L^2(0,T;W_\Gamma^*)}
	\le  |\Lambda_6|_{L^2(0,T)}, \\
	| \xi_{\Gamma,\kappa} |_{L^2(0,T;W_\Gamma)} \le \liminf_{\varepsilon \to 0} 
	\bigl| \beta_{\Gamma,\varepsilon}( u_{\Gamma,\varepsilon}) \bigr|_{L^2(0,T;W_\Gamma)}
	\le |\Lambda_{11}|_{L^2(0,T)}.
	\label{el}
\end{gather}
Based on these uniform estimates, we can prove Theorem~\ref{convergence}.

\subsection{Proof of Theorem~2.1}
Thanks to the uniform estimates (\ref{e1})--(\ref{el}) \pier{and by compactness we deduce the following 
convergence properties: there exist a subsequence of $\kappa$ (not relabeled) and some limit functions 
$u, u_\Gamma, \mu, \xi, \xi_\Gamma$ such that the convergences \eqref{u1}--\eqref{xig1} and
\begin{align}
	\kappa u_{\Gamma,\kappa} \to 0 
	 \quad \text{strongly in } L^\infty(0,T;V_\Gamma)
	\label{ug2}
\end{align}
hold as $\kappa \searrow 0$.}
Now, by using (\ref{u1}) and (\ref{mu1}), \pier{taking the limit in the variational formulation~(\ref{vf}) yields \eqref{pier5}.}
About the equation \pier{in (\ref{chd2k}), we can pass to the limit directly 
with respect to} $\kappa$ and obtain 
\begin{equation}
	\mu = \tau \partial_t u -\Delta u 
	+ \xi + \pi (u)-g
	\quad \text{a.e.\ in }Q, 
	\label{equmu2}
\end{equation}
 \pier{\takeshi{whereas} for the inclusion in (\ref{chd2k}) it is easy to infer that
\begin{equation*}
	\xi \in \beta(u) \quad \text{a.e.\ in }Q,
\end{equation*}
by} (\ref{u1}), (\ref{beta1}) and the demi-closedness of $\beta$. \pier{Hence, also \eqref{pier6} is proved.}
The trace condition \pier{\eqref{pier7}, i.e.,} $u_{|_\Gamma}=u_\Gamma$ a.e.\ on $\Sigma$, is a consequence of 
(\ref{u1}) and (\ref{ug1}). The 
initial conditions \pier{\eqref{pier10} follow} from (\ref{u1}) and (\ref{ug1}) as well. 
It \pier{remains to pass to the limit in (\ref{chd5k}) in order to show \eqref{pier8} and \eqref{pier9}. 
Then, testing the equality  in (\ref{chd5k}) by an arbitrary $z_\Gamma \in V_\Gamma$, we have that}
\begin{align}
	& \int_\Gamma \partial_t u_{\Gamma,\kappa} z_\Gamma d\Gamma
	+ \langle \partial_{\boldsymbol{\nu}} u_{\kappa}, z_\Gamma \rangle_{W_\Gamma^*,W_\Gamma}
	+ \kappa \int_\Gamma \nabla _\Gamma u_{\Gamma,\kappa} \cdot \nabla_\Gamma z_\Gamma d\Gamma
	\nonumber \\
	& \quad {}	
	+ \langle \xi_{\Gamma,\kappa}, z_\Gamma \rangle_{W_\Gamma^*,W_\Gamma}
	+ \int_\Gamma \pi_\Gamma(u_{\Gamma,\kappa}) z_\Gamma d\Gamma
	= \int_\Gamma g_\Gamma z_\Gamma d\Gamma
	\label{vfg}
\end{align}
\pier{a.e.\ in $(0,T)$.  Taking the limit as $\kappa \searrow 0$, 
with the help of (\ref{u1}), \eqref{ug1}--(\ref{xig1}) and (\ref{ug2}) we obtain \eqref{pier8}
for all $z_\Gamma \in V_\Gamma$. As a further step, the density of $V_\Gamma$  in $W_\Gamma$ helps us to definitively show \eqref{pier8}.}
Let us point out that this implies that the equation 
\begin{equation}
	\partial_t u_{\Gamma}+
	\partial_{\boldsymbol{\nu}} u
	+ \xi_{\Gamma}
	+ \pi_\Gamma(u_{\Gamma})
	= g_\Gamma
	\quad \pier{\text{holds in }} L^2(0,T;W_\Gamma^*).
	\label{eqong}
\end{equation}
Next, we have to prove \eqref{pier9}.
Note here that from the inclusion in (\ref{chd5k}), we have \pier{that}
\begin{equation*}
	\xi_{\Gamma, \kappa} (t) \in \beta_{\Gamma,H_\Gamma} \bigl( u_{\Gamma,\kappa} (t)\bigr) 
	\quad \text{in }H_\Gamma , \  \text{for a.a.\ } t \in (0,T). 
\end{equation*}
Thus, due to  the convergences (\ref{ug1}) and (\ref{xig1}), it is enough to prove that 
\begin{equation}
	\limsup_{\kappa \searrow 0} \int_0^T 
	\bigl\langle \xi_{\Gamma, \kappa} (t), u_{\Gamma,\kappa}(t) \bigr\rangle_{W_\Gamma^*,W_\Gamma} dt
	\le \int_0^T 
	\bigl\langle \xi_{\Gamma} (t), u_{\Gamma}(t) \bigr\rangle_{W_\Gamma^*,W_\Gamma} dt
	\label{limsup}
\end{equation}
Indeed, taking $z_\Gamma:=u_{\Gamma,\kappa}$ in (\ref{vfg}) and integrating it with respect to time, we obtain 
\begin{align}
	&\int_0^T \bigl\langle \xi_{\Gamma,\kappa}(t), u_{\Gamma,\kappa}(t) \bigr\rangle_{W_\Gamma^*,W_\Gamma}dt \nonumber \\
	& = \int_0^T 
	\bigl( g_\Gamma (t) - \partial_t u_{\Gamma,\kappa}(t)-\pi_\Gamma \bigl(u_{\Gamma,\kappa}(t)\bigr), 
	u_{\Gamma,\kappa}(t) \bigr)_{\! H_\Gamma}dt  
	\nonumber 
	\\
	& \quad {}- \int_0^T \bigl\langle \partial_{\boldsymbol{\nu}} u_{\kappa}(t), u_{\Gamma,\kappa}(t)  
	\bigr\rangle_{W_\Gamma^*,W_\Gamma}dt 
	- \kappa \int_0^T\!\!\int_\Gamma \bigl| 
	\nabla _\Gamma u_{\Gamma,\kappa}(t) \bigr|^2 d\Gamma dt.
	\label{limsup2}
\end{align}
Next, we observe that an integration by part formula gives 
\begin{equation*}
	- \int_0^T \bigl\langle \partial_{\boldsymbol{\nu}} u_{\kappa}(t), u_{\Gamma,\kappa}(t)  
	\bigr\rangle_{W_\Gamma^*,W_\Gamma}dt 
	= -\int_0^T\!\!\int_\Omega \Delta u_{\kappa}(t) u_{\kappa}(t)dxdt 
	- \int_0^T\!\!\int_\Omega \bigl|\nabla u_{\kappa}(t) \bigr|^2 dx dt
\end{equation*}
\pier{and, in view of (\ref{u1}) and (\ref{Lap1}), taking the limit superior
yields} 
\begin{align*}
	& \limsup_{\kappa \searrow 0} 
	\left\{ -\int_0^T \bigl\langle \partial_{\boldsymbol{\nu}} u_{\kappa}(t), u_{\Gamma,\kappa}(t)  
	\bigr\rangle_{W_\Gamma^*,W_\Gamma}dt \right\} 
	\nonumber \\
	& 
	= - \int_0^T\!\!\int_\Omega \Delta u(t) u(t)dxdt 
	- \liminf _{\kappa \searrow 0}\int_0^T\!\!\int_\Omega \bigl|\nabla u_{\kappa}(t) \bigr|^2 dx dt 
	\nonumber \\
	& \le  - \int_0^T\!\!\int_\Omega \Delta u(t) u(t)dxdt 
	-\int_0^T\!\!\int_\Omega \bigl|\nabla u(t) \bigr|^2 dx dt	
	\pier{{} = - \int_0^T \bigl\langle \partial_{\boldsymbol{\nu}} u(t), 
	u_{\Gamma}(t)  \bigr\rangle_{W_\Gamma^*,W_\Gamma}dt}
\end{align*}
On the other hand, we \pier{see} that 
\begin{equation*}
	- \kappa \int_0^T\!\!\int_\Gamma \bigl| 
	\nabla _\Gamma u_{\Gamma,\kappa}(t) \bigr|^2 d\Gamma dt \le 0
\end{equation*} 
\pier{and} also point out that 
\begin{align*}
	& \lim_{\kappa \searrow 0} 
	\int_0^T 
	\bigl( g_\Gamma (t) - \partial_t u_{\Gamma,\kappa}(t)-\pi_\Gamma \bigl(u_{\Gamma,\kappa}(t)\bigr), 
	u_{\Gamma,\kappa}(t) \bigr)_{\! H_\Gamma}dt  
	\nonumber \\
	& = \int_0^T 
	\bigl( g_\Gamma (t) - \partial_t u_{\Gamma}(t)-\pi_\Gamma \bigl(u_{\Gamma}(t)\bigr), 
	u_{\Gamma,\kappa}(t) \bigr)_{\! H_\Gamma}dt.
\end{align*}
Collecting \pier{these remarks}, from (\ref{limsup2}) and (\ref{eqong}) we infer that 
\begin{align*}
	& \limsup_{\kappa \searrow 0} 
	\int_0^T \bigl\langle \xi_{\Gamma,\kappa}(t), u_{\Gamma,\kappa}(t) \bigr\rangle_{W_\Gamma^*,W_\Gamma}dt
	\nonumber \\
	& = \int_0^T 
	\bigl( g_\Gamma (t) - \partial_t u_{\Gamma}(t)-\pi_\Gamma \bigl(u_{\Gamma}(t)\bigr), 
	u_{\Gamma}(t) \bigr)_{\! H_\Gamma}dt  
	- \int_0^T \bigl\langle \partial_{\boldsymbol{\nu}} u(t), u_{\Gamma}(t)  
	\bigr\rangle_{W_\Gamma^*,W_\Gamma}dt 
	\nonumber \\
	& = \int_0^T \bigl\langle \xi_\Gamma (t), u_{\Gamma}(t)  
	\bigr\rangle_{W_\Gamma^*,W_\Gamma}dt 
\end{align*}
which concludes the proof of (\ref{limsup}), \pier{and consequently of (\ref{pier9}). We note that actually the 
convergences~\eqref{u1}--\eqref{xig1} 
do occur for the whole family as $k$ tends to $0$ since the limit quintuplet $(u,\mu, \xi, u_\Gamma, \xi_\Gamma )$  solves the 
problem \eqref{pier5}--\eqref{pier10} and this problem admits a unique solution (as from Theorem~\ref{contdep}). By this reamark
we} conclude the proof of Theorem~\ref{convergence}. \hfill $\Box$ 

\subsection{Proof of Theorem~2.2}

We can now prove the continuous dependence of \pier{the solutions with respect to} data. We \pier{recall \eqref{media} and underline} that both 
$u_{0,1}$ and $u_{0,2}$ satisfy (A4)  with the same value $m_0$. Now, we take the difference 
of \pier{(\ref{pier5}) written for $u_1, \mu_1 $ and $u_2, \mu_2$, then 
choose $z={\mathcal N}(u_1-u_2)$ (which is possible due to the mean value conservation property) and deduce that}
\begin{equation}
	\frac{1}{2} \frac{d}{dt} | u_1-u_2|_{V^*} ^2 
	+ ( \mu_1- \mu_2, u_1-u_2)_H=0
	\label{cont1}
\end{equation}
a.e.\ in $(0,T)$. 
Moreover, 
we take the difference of the \pier{equations in (\ref{pier6}) and 
test by $u_1-u_2$. We infer that} 
\begin{align}
	& ( \mu_1- \mu_2, u_1-u_2 )_H
	\nonumber \\
	& 
	= \frac{\tau}{2}\frac{d}{dt}
	|u_1-u_2|_H^2 
	+ \int_\Omega \bigl| \nabla (u_1-u_2)  \bigr|^2dx 
	- \bigl\langle \partial_{\boldsymbol{\nu}} (u_1-u_2), u_{\Gamma,1}-u_{\Gamma,2} \bigr \rangle_{W_\Gamma^*, W_\Gamma} 
	\nonumber \\	
	& \quad {}+ ( \xi_1-\xi_2, u_1-u_2 )_H 
	+ \bigl( \pi(u_1)-\pi(u_2),u_1-u_2\bigr)_H 
	- (g_1-g_2,u_1-u_2)_H
	\label{cont2}
\end{align}
a.e.\ in $(0,T)$. 
Now, with the help of \pier{(\ref{pier8}), we have that} 
\begin{align}
	& 
	- \bigl\langle \partial_{\boldsymbol{\nu}} (u_1-u_2), u_{\Gamma,1}-u_{\Gamma,2} 
	\bigr \rangle_{W_\Gamma^*, W_\Gamma} 
	\nonumber \\
	& = \frac{1}{2} \frac{d}{dt} |u_{\Gamma,1}-u_{\Gamma,2}|_{H_\Gamma}^2 
	+ \langle \xi_{\Gamma,1}-\xi_{\Gamma,2}, u_{\Gamma,1}-u_{\Gamma,2} \rangle_{W_\Gamma^*,W_\Gamma} 
	\nonumber \\
	& \quad {}	
	+ \bigl( \pi_\Gamma(u_{\Gamma,1})-\pi_\Gamma(u_{\Gamma,2}),u_{\Gamma,1}-u_{\Gamma,2} \bigr)_{\! H_\Gamma} 
	- (g_{\Gamma,1}-g_{\Gamma,2},u_{\Gamma,1}-u_{\Gamma,2})_{H_\Gamma}
	\label{cont3}
\end{align}
a.e.\ in $(0,T)$. 
Combining (\ref{cont1})--(\ref{cont3}), adding 
$|u_1-u_2|_H^2$, 
applying the Young inequality, and 
using  the monotonicity of $\beta$ and $\beta_\Gamma$,
we obtain 
\begin{align}
	& \frac{1}{2} \frac{d}{dt} | u_1-u_2|_{V^*} ^2 
	+ \frac{\tau}{2}\frac{d}{dt}
	|u_1-u_2|_H^2 
	+ | u_1-u_2 |_V^2
	+\frac{1}{2} \frac{d}{dt} |u_{\Gamma,1}-u_{\Gamma,2}|_{H_\Gamma}^2 
	\nonumber \\
	& \le  |u_1-u_2|_H^2  + L |u_1-u_2|_H^2 
	+  |u_1-u_2|_H^2+ \frac{1}{4} |g_1-g_2|_H^2
	\nonumber \\
	& \quad {}
	+ L_\Gamma |u_{\Gamma,1}-u_{\Gamma,2}|_{H_\Gamma}^2 
	+ |u_{\Gamma, 1}-u_{\Gamma,2}|_{H_\Gamma}^2 
	+ \frac{1}{4} |g_{\Gamma,1}-g_{\Gamma,2}|_{H_\Gamma}^2
	\nonumber \\
	& \le (2+L ) \bigl( \delta |u_1-u_2|_V^2 
	+ C(\delta)  |u_1-u_2|_{V^*}^2 \bigr) 
	+  (1+L_\Gamma) |u_{\Gamma,1}-u_{\Gamma,2}|_{H_\Gamma}^2 
	\nonumber \\
	& \quad {}
	+ \frac{1}{4} |g_1-g_2|_H^2
	+ \frac{1}{4} |g_{\Gamma,1}-g_{\Gamma,2}|_{H_\Gamma}^2
	\label{cont4}
\end{align}
a.e.\ in $(0,T)$, 
where we used the compactness inequality \pier{(see, e.g., \cite[Lemme~5.1, p.~58]
{Lions} or \cite[Theorem~16.4, p.~102]{LM72})}
with the parameter $\delta >0$ and 
some constant $C(\delta)$. Now we take $\delta =1/2(2+L)$, then 
\pier{integrate from $0$ to $t\in [0,T]$ using the initial conditions and finally apply the Gronwall lemma. We} deduce that
\begin{align*}
	& \bigl|u_1(t)-u_2(t) \bigr|_{V^*}^2 
	+ \tau \bigl|u_1(t)-u_2(t) \bigr|_{H}^2
	\nonumber \\
	&\quad \pier{{}+ \int_0^t \takeshi{\bigl|} (u_1-u_2)(s) \takeshi{\bigr |}_V^2ds}
	+ \bigl|u_{\Gamma,1}(t)-u_{\Gamma,2}(t) \bigr|_{H_\Gamma}^2
	\nonumber \\
	& \le C_8 \Bigl\{ 
	 |u_{0,1}-u_{0,2}|_{V^*}^2 
	+ \tau |u_{0,1}-u_{0,2}|_{H}^2
	+ |u_{0\Gamma,1}-u_{0\Gamma,2}|_{H_\Gamma}^2
	\nonumber \\
	& \quad {} + 
	|g_1-g_2|_{L^2(0,T;H)}^2 
	+ 
	|g_{\Gamma,1}-g_{\Gamma,2}|_{L^2(0,T;H_\Gamma)}^2 
	\Bigr\}
\end{align*}
for all $t \in [0,T]$, where $C_8$ is a positive constant depending on $L,L_\Gamma$, and $T$. 
Finally, \pier{by the last inequality we easily arrive 
at the desired estimate~(\ref{dep}).} \hfill $\Box$

\section{Improvement of the convergence-existence theorem}
\setcounter{equation}{0}

The aim of this section is an improvement \pier{of Theorem~\ref{convergence}} in the case when the two graphs $\beta$ and $\beta_\Gamma$ 
grow at the same kind of rate, more precisely: 

\begin{enumerate}
\item[(A2)$^\prime$]  \pier{$D (\beta_\Gamma) = D(\beta)$ and there exist \pier{two constants $\varrho \geq 1$ and $c_0>0$} such that 
\begin{equation}
	\frac 1 \varrho \bigl| \beta ^\circ_\Gamma (r) \bigr|-c_0 
	\le \bigl| \beta^\circ(r) \bigr| \le \varrho \bigl| \beta ^\circ_\Gamma (r) \bigr|+c_0 \quad 
	\text{for all } r \in D(\beta)\equiv D(\beta_\Gamma).
	\label{cccond2}
\end{equation}}
\end{enumerate}
\vskip-5pt

Under this assumption, the same \pier{inequalities as \eqref{cccond2} hold for the {Y}osida  regularizations $\beta_\varepsilon$ and $\beta_{\Gamma, \varepsilon}$, that is, 
\begin{gather}
	\frac 1 \varrho  \bigl| \beta_{\Gamma, \varepsilon} (r) \bigr|-c_0 
	\le
	\bigl |\beta_\varepsilon (r)\bigr | 
	\le \varrho \bigl |\beta _{\Gamma,\varepsilon } (r)\bigr |+c_0
	\quad 
	\text{for all } r \in \mathbb{R},
	\label{cccond2e}
\end{gather} 
for all $\varepsilon \in (0,1]$, with the same constants 
$\varrho$ and $c_0$ (see Appendix).}
\smallskip

\pier{Under the assumption (A2)$^\prime$, the conclusion of Theorem~\ref{convergence} can be improved. In particular, the solution $(u,\mu, \xi, u_\Gamma, \xi_\Gamma )$ to \eqref{pier5}--\eqref{pier10} is more regular.}

\begin{theorem}
\label{regularity}
\pier{Let $\tau \ge 0$ and assume that {\rm (A1), (A2)$^\prime$, (A3)--(A5)} hold. For all $\kappa\in (0,1]$
let 
$(u_\kappa ,\mu_\kappa , \xi_\kappa , u_{\Gamma, \kappa }, \xi_{\Gamma, \kappa } )$ denote the solution to 
\eqref{chd1}--\eqref{chd6}  defined by Proposition~\ref{prop}. Then
the convergences \eqref{u1}--\eqref{nu1} and 
\begin{align}
	\xi_{\Gamma,\kappa} \to \xi_\Gamma
	&  \quad \text{weakly in } L^2 (0,T;H_\Gamma) 
	\label{pier15}
\end{align}
hold as $\kappa \searrow 0$, where $(u,\mu, \xi, u_\Gamma, \xi_\Gamma )$ 
is the solution to \eqref{pier5}--\eqref{pier10} and satisfies 
\begin{align}
	& u \in H^1(0,T;V^*)\cap C \takeshi{ \bigl (} [0,T];H \takeshi{ \bigr )} \cap L^\infty (0,T;V) \cap L^2 \bigl(0,T;H^{3/2}(\Omega) \bigr), \\
	& \tau u \in H^1(0,T;H), \quad 
	 \mu \in L^2(0,T;V), \quad \xi \in L^2(0,T;H), \\
	& u_\Gamma \in H^1(0,T;H_\Gamma )\cap \takeshi{C} \bigl([0,T]; W_\Gamma \bigr) \cap L^2 \bigl( 0,T;V_\Gamma \bigr), 
	\label{pier18}\\
	& \partial_{\boldsymbol{\nu}} u \in L^2(0,T;H_\Gamma), \quad
	 \xi_\Gamma \in L^2(0,T;H_\Gamma). 
\end{align}
Moreover, we have that
\begin{gather}
	\partial_t u_{\Gamma}+
	\partial_{\boldsymbol{\nu}} u
	+ \xi_{\Gamma}
	+ \pi_\Gamma(u_{\Gamma})
	= g_\Gamma, 
	\quad \xi_\Gamma \in \beta_\Gamma(u_\Gamma) \quad \text{a.e.\ on } \Sigma
	\label{pier16}
\end{gather}
as improvement of \eqref{pier8} and \eqref{pier9}.}
\end{theorem}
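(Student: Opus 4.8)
The plan is to revisit the uniform estimates of Section~3 and to show that, under the stronger two-sided condition (A2)$^\prime$, the quantity $\beta_{\Gamma,\varepsilon}(u_{\Gamma,\varepsilon})$ is bounded in $L^2(0,T;H_\Gamma)$ \emph{uniformly with respect to both $\varepsilon$ and $\kappa$} --- a control that was unavailable under the mere one-sided assumption (A2), where only the $\kappa$-degenerate bound \eqref{37-1} held. The crucial observation is that the estimate \eqref{35} already furnishes a $\kappa$-independent bound on $\beta_\varepsilon(u_{\Gamma,\varepsilon})$, that is, the \emph{bulk} graph evaluated at the boundary values, in $L^2(0,T;H_\Gamma)$. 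Invoking the first inequality in \eqref{cccond2e}, rewritten as $|\beta_{\Gamma,\varepsilon}(r)|\le \varrho|\beta_\varepsilon(r)|+\varrho c_0$, and integrating over $\Gamma$, I would deduce
\begin{equation*}
	\bigl| \beta_{\Gamma,\varepsilon}\bigl(u_{\Gamma,\varepsilon}(t)\bigr)\bigr|_{H_\Gamma}
	\le \varrho\,\bigl| \beta_\varepsilon\bigl(u_{\Gamma,\varepsilon}(t)\bigr)\bigr|_{H_\Gamma}
	+\varrho c_0 |\Gamma|^{1/2}
	\le \varrho\,\Lambda_4(t)+\varrho c_0 |\Gamma|^{1/2}
\end{equation*}
for a.a.\ $t\in(0,T)$, the right-hand side being bounded in $L^2(0,T)$ independently of $\varepsilon$ and $\kappa$. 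This is precisely the key new estimate and, I expect, the main (indeed the only essential) obstacle: the whole point of (A2)$^\prime$ is that it transfers the already available $\kappa$-uniform control from the bulk graph (on the trace) to the boundary graph, and everything else is a re-run of the limiting machinery of Sections~3 and~4 carried out in $H_\Gamma$ rather than in $W_\Gamma^*$.

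Passing first to the limit as $\varepsilon\searrow0$ exactly as in the proof of Proposition~\ref{prop}, and using the weak lower semicontinuity of the $L^2(0,T;H_\Gamma)$-norm, this bound transfers to $\xi_{\Gamma,\kappa}$, giving $|\xi_{\Gamma,\kappa}|_{L^2(0,T;H_\Gamma)}\le C$ with $C$ independent of $\kappa$. By weak compactness, along the subsequence of Theorem~\ref{convergence} one then has $\xi_{\Gamma,\kappa}\to\xi_\Gamma$ weakly in $L^2(0,T;H_\Gamma)$; since $H_\Gamma\hookrightarrow W_\Gamma^*$ and the weak-$W_\Gamma^*$ limit is unique, this limit coincides with the $\xi_\Gamma$ already produced in Theorem~\ref{convergence}, which therefore belongs to $L^2(0,T;H_\Gamma)$. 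This establishes \eqref{pier15} together with $\xi_\Gamma\in L^2(0,T;H_\Gamma)$, while the convergences \eqref{u1}--\eqref{nu1} are inherited verbatim from Theorem~\ref{convergence}.

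Next I would upgrade the boundary relations to \eqref{pier16}. In the weak identity \eqref{pier8}, equivalently \eqref{eqong}, all terms except $\partial_{\boldsymbol{\nu}}u$ now lie in $L^2(0,T;H_\Gamma)$: indeed $\partial_t u_\Gamma\in L^2(0,T;H_\Gamma)$ by \eqref{ug1}, $\pi_\Gamma(u_\Gamma)\in L^2(0,T;H_\Gamma)$ by (A3), $g_\Gamma\in L^2(0,T;H_\Gamma)$ by (A5), and $\xi_\Gamma\in L^2(0,T;H_\Gamma)$ by the previous step. Hence a comparison of terms yields $\partial_{\boldsymbol{\nu}}u=g_\Gamma-\partial_t u_\Gamma-\xi_\Gamma-\pi_\Gamma(u_\Gamma)\in L^2(0,T;H_\Gamma)$, so that the boundary equation in \eqref{pier16} holds a.e.\ on $\Sigma$. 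For the inclusion I would use that $u_{\Gamma,\kappa}\to u_\Gamma$ strongly in $C([0,T];H_\Gamma)$, hence in $L^2(0,T;H_\Gamma)$, whereas $\xi_{\Gamma,\kappa}\to\xi_\Gamma$ weakly in $L^2(0,T;H_\Gamma)$ with $\xi_{\Gamma,\kappa}\in\beta_{\Gamma,H_\Gamma}(u_{\Gamma,\kappa})$; the strong--weak closedness of the maximal monotone operator induced by $\beta_\Gamma$ on $L^2(0,T;H_\Gamma)$ (cf.\ \cite[Proposition~2.2, p.~38]{Bar10}) then gives $\xi_\Gamma\in\beta_{\Gamma,H_\Gamma}(u_\Gamma)$, i.e.\ $\xi_\Gamma\in\beta_\Gamma(u_\Gamma)$ a.e.\ on $\Sigma$, completing \eqref{pier16}. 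Note that here the product $\langle\xi_{\Gamma,\kappa},u_{\Gamma,\kappa}\rangle$ now converges (no mere $\limsup$ as in \eqref{limsup} is needed), thanks to the strong $L^2(0,T;H_\Gamma)$ convergence of $u_{\Gamma,\kappa}$.

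Finally, for the spatial regularity $u\in L^2(0,T;H^{3/2}(\Omega))$ and $u_\Gamma\in L^2(0,T;V_\Gamma)$ in \eqref{pier18}, I would combine $\Delta u\in L^2(0,T;H)$ (from \eqref{pier6} and \eqref{Lap1}) with the newly obtained $\partial_{\boldsymbol{\nu}}u\in L^2(0,T;H_\Gamma)$ and apply the elliptic regularity theory, now with the Neumann datum $\partial_{\boldsymbol{\nu}}u\in L^2(0,T;H_\Gamma)$ rather than the Dirichlet one used in \cite[Theorem~3.2, p.~1.79]{BG87}, to infer $u\in L^2(0,T;H^{3/2}(\Omega))$; this route avoids the circularity of the Dirichlet estimate, which would require $u_\Gamma\in L^2(0,T;V_\Gamma)$ in advance. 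Taking traces then yields $u_\Gamma\in L^2(0,T;V_\Gamma)$, and the remaining continuity $u_\Gamma\in C([0,T];W_\Gamma)$ follows from $u_\Gamma\in H^1(0,T;H_\Gamma)\cap L^2(0,T;V_\Gamma)$ together with the interpolation identity $[H_\Gamma,V_\Gamma]_{1/2}=W_\Gamma$. This exhausts all the additional assertions of the statement.
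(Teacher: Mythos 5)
Your proposal is correct and follows essentially the same route as the paper's own proof: the key step in both is to combine \eqref{cccond2e} with the $\kappa$-uniform bound \eqref{35} on $\beta_\varepsilon(u_{\Gamma,\varepsilon})$ to obtain a $\kappa$- and $\varepsilon$-uniform $L^2(0,T;H_\Gamma)$ estimate for $\beta_{\Gamma,\varepsilon}(u_{\Gamma,\varepsilon})$, which yields \eqref{pier15}, and then the inclusion via strong--weak (demi-)closedness of the operator induced by $\beta_\Gamma$, the comparison in \eqref{eqong} giving $\partial_{\boldsymbol{\nu}}u\in L^2(0,T;H_\Gamma)$, the Neumann-type elliptic regularity for $u\in L^2\bigl(0,T;H^{3/2}(\Omega)\bigr)$, and the trace-plus-interpolation argument for \eqref{pier18} all coincide with the paper's argument. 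The only differences are cosmetic (you derive the inclusion after the comparison step rather than before, and you track the constant as $\varrho c_0$ rather than $c_0$), so nothing further is needed.
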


\begin{proof} \pier{We refer to the estimate~(\ref{37-5}) in Lemma~3.7
and show that (A2)$^\prime$ and consequently \eqref{cccond2e} allow us to 
produce a better estimate. Indeed, thanks to (\ref{cccond2e}) and \eqref{35} we can obtain 
\begin{align*}
	\bigl| \beta_{\Gamma,\varepsilon} \bigl( u_{\Gamma,\varepsilon}(t) \bigr) \bigr|_{H_\Gamma}^2 
	& \le \int_\Gamma \left( \varrho \bigl| \beta_{\varepsilon} 
	\bigl(u_{\Gamma,\varepsilon} (t) \bigr)  \bigr|+ c_0 \right)^2 d\Gamma 
	\nonumber \\
	& \le 2 \varrho^2 
	\bigl| \beta_{\varepsilon} \bigl( u_{\Gamma,\varepsilon}(t) \bigr) \bigr|_{H_\Gamma}^2 
	+  2 c_0^{\,2}|\Gamma| 
	\nonumber \\
	& \le C_9\bigl( 1+\Lambda_4(t)^2 \bigr)
\end{align*}
for a.a.\ $t \in (0,T)$, where the right-hand side is bounded in $L^1(0,T)$. Thus, the estimate (\ref{37-5}) in Lemma~3.7 is replaced by the key 
estimate
\begin{equation*}
	\bigl| \beta_{\Gamma,\varepsilon} \bigl( u_{\Gamma,\varepsilon}(t) \bigr) \bigr|_{H_\Gamma}
	\le \Lambda_{12}(t)
\end{equation*}
for a.a.\ $t \in (0,T)$, where $\Lambda_{12}$ is bounded in $L^2(0,T)$. 
Hence, the convergence in (\ref{xig1}) can be improved to \eqref{pier15}.}
Using the demi-closedness of the maximal monotone operator \pier{induced by
$\beta_\Gamma$ on} $L^2(0,T;H_\Gamma)$, in the light of (\ref{ug1}) and \pier{(\ref{pier15})} we can show that 
\begin{equation*}
	\xi_\Gamma \in \beta_\Gamma (u_\Gamma) 
	\quad \text{a.e.\ on } \Sigma,
\end{equation*}
which improves \pier{(\ref{pier9})}. 
Moreover, \pier{from a comparison of the terms in (\ref{eqong}) it turns out that
\begin{equation}
	\partial_{\boldsymbol{\nu}} u \in L^2(0,T;H_\Gamma), \label{pier17}
\end{equation}
thus the condition (\ref{eqong}) on the boundary actually holds in 
$L^2(0,T;H_\Gamma)$.
The additional information \eqref{pier17}, along with 
\begin{align*}
	u \in L^2(0,T;V) ,  \quad  \Delta u \in L^2(0,T;H),
\end{align*}
implies that $u \in L^2(0,T;H^{3/2}(\Omega))$. Indeed, one can apply the elliptic regularity theorem 
\cite[Theorem~3.2, p.~1.79]{BG87} to $u(t) \in V$ solving
\begin{equation*}
\begin{cases}
	-\Delta u (t)= \tilde g  (t)\quad \text{a.e.\ in } \Omega, 
	\\[1mm]
	\partial_{\boldsymbol{\nu}} u (t) = \tilde{g}_\Gamma (t) \quad \text{a.e.\ on } \Gamma
\end{cases}
\end{equation*}
for a.a. $t\in (0,T)$, where 
\begin{align*}
&\tilde g=\mu-\tau \partial_t u -\xi -\pi(u)+g \in L^2(0,T;H), \\
&\tilde{g}_\Gamma= g_\Gamma - \partial_t u_\Gamma -\xi_\Gamma -\pi_\Gamma(u_\Gamma)  
\in L^2(0,T;H_\Gamma).
\end{align*}
As a consequence of  $u \in L^2(0,T;H^{3/2}(\Omega))$, the trace theory enables us deduce that  
$u_\Gamma =u_{|_\Gamma} \in L^2(0,T;V_\Gamma)$, whence \eqref{pier18} follows by interpolation.}
\end{proof}

\section*{Appendix}
\renewcommand{\theequation}{a.\arabic{equation}}
\setcounter{equation}{0}

We use the same setting as in the previous sections.

\begin{lemmaa}
Assume {\rm (A2)}. Then {\rm (\ref{ccconde})} holds, \pier{that is,}
\begin{equation*}
	\bigl |\beta_\varepsilon (r)\bigr | 
	\le \varrho \bigl |\beta _{\Gamma,\varepsilon } (r)\bigr |+c_0
	\quad 
	\text{for all } r \in \mathbb{R},
\end{equation*} 
for all $\varepsilon \in (0,1]$ with the same constants \pier{$\varrho \geq 1 $ and $c_0 >0 $.}
\end{lemmaa}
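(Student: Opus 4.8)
The plan is to exploit the monotonicity of $\beta$ together with the elementary properties of the resolvents in order to sandwich $\beta_\varepsilon(r)$ between $\beta_{\Gamma,\varepsilon}(r)$ and a suitable minimal section of $\beta$. Fix $\varepsilon \in (0,1]$ and $r \in \mathbb{R}$, and set $y:=J_\varepsilon(r)$ and $y_\Gamma:=J_{\Gamma,\varepsilon}(r)$, so that by \eqref{yosida1}--\eqref{yosida2} one has $r-y=\varepsilon\beta_\varepsilon(r)$ and $r-y_\Gamma=\varepsilon\beta_{\Gamma,\varepsilon}(r)$, with $\beta_\varepsilon(r)\in\beta(y)$ and $\beta_{\Gamma,\varepsilon}(r)\in\beta_\Gamma(y_\Gamma)$. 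The crucial preliminary observation is that $y_\Gamma\in D(\beta_\Gamma)\subseteq D(\beta)$ thanks to assumption (A2), so that the minimal section $\beta^\circ(y_\Gamma)$ is well defined and belongs to $\beta(y_\Gamma)$.

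First I would apply the monotonicity of $\beta$ to the two pairs $(y,\beta_\varepsilon(r))$ and $(y_\Gamma,\beta^\circ(y_\Gamma))$, obtaining
\[
\bigl(\beta_\varepsilon(r)-\beta^\circ(y_\Gamma)\bigr)(y-y_\Gamma)\ge 0.
\]
Next, substituting $y-y_\Gamma=\varepsilon\bigl(\beta_{\Gamma,\varepsilon}(r)-\beta_\varepsilon(r)\bigr)$ and dividing by $\varepsilon>0$, this reduces to
\[
\bigl(\beta_\varepsilon(r)-\beta^\circ(y_\Gamma)\bigr)\bigl(\beta_{\Gamma,\varepsilon}(r)-\beta_\varepsilon(r)\bigr)\ge 0,
\]
which is precisely the statement that the real number $\beta_\varepsilon(r)$ lies in the closed interval with endpoints $\beta_{\Gamma,\varepsilon}(r)$ and $\beta^\circ(y_\Gamma)$. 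In particular $|\beta_\varepsilon(r)|\le\max\bigl\{|\beta_{\Gamma,\varepsilon}(r)|,\,|\beta^\circ(y_\Gamma)|\bigr\}$.

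To close the argument I would estimate the second term in the maximum. Since $\beta_{\Gamma,\varepsilon}(r)\in\beta_\Gamma(y_\Gamma)$ and $\beta^\circ_\Gamma(y_\Gamma)$ is the element of least modulus in $\beta_\Gamma(y_\Gamma)$, one has $|\beta^\circ_\Gamma(y_\Gamma)|\le|\beta_{\Gamma,\varepsilon}(r)|$; combining this with the inequality \eqref{cccond} of assumption (A2) evaluated at the point $y_\Gamma\in D(\beta_\Gamma)$ gives $|\beta^\circ(y_\Gamma)|\le\varrho|\beta^\circ_\Gamma(y_\Gamma)|+c_0\le\varrho|\beta_{\Gamma,\varepsilon}(r)|+c_0$. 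Since $\varrho\ge 1$, the right-hand side dominates $|\beta_{\Gamma,\varepsilon}(r)|$ as well, so the maximum above is bounded by $\varrho|\beta_{\Gamma,\varepsilon}(r)|+c_0$, which yields \eqref{ccconde} with the same constants.

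The main obstacle is conceptual rather than computational: because $\beta$ and $\beta_\Gamma$ are distinct graphs, their resolvents $J_\varepsilon$ and $J_{\Gamma,\varepsilon}$ do not agree, so one cannot directly transfer the pointwise bound (A2) to the regularizations. The device that overcomes this is the monotonicity inequality across the two different contraction points $y$ and $y_\Gamma$, which produces the sandwich relation and lets the minimal-section bound at the single point $y_\Gamma$ do all the work. One should only take care that every application of the minimal-section inequality and of (A2) is legitimate, which is guaranteed exactly by the domain inclusion $D(\beta_\Gamma)\subseteq D(\beta)$.
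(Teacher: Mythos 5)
Your proof is correct, and it takes a genuinely different route from the paper's. The paper's Lemma~A.1 is essentially a citation argument: it quotes \cite[Lemma~4.4]{CC13}, which gives $|\beta_\varepsilon(r)| \le \varrho\,|\beta_{\Gamma,\varepsilon\varrho}(r)| + c_0$ with the \emph{mismatched} Yosida parameter $\varepsilon\varrho$ on the boundary graph, and then removes the mismatch via the monotone dependence of Yosida approximations on their parameter (\cite[Proposition~2.6, p.~28]{Bre73}), namely $|\beta_{\Gamma,\varepsilon\varrho}(r)| \le |\beta_{\Gamma,\varepsilon}(r)|$ because $\varepsilon \le \varepsilon\varrho$ (this is where $\varrho\ge1$ enters there). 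You instead argue from first principles: with $y=J_\varepsilon(r)$ and $y_\Gamma=J_{\Gamma,\varepsilon}(r)$, the inclusion $D(\beta_\Gamma)\subseteq D(\beta)$ legitimizes $\beta^\circ(y_\Gamma)$; the monotonicity of $\beta$ applied to the pairs $(y,\beta_\varepsilon(r))$ and $(y_\Gamma,\beta^\circ(y_\Gamma))$, together with the identity $y-y_\Gamma=\varepsilon\bigl(\beta_{\Gamma,\varepsilon}(r)-\beta_\varepsilon(r)\bigr)$, gives the sandwich $\bigl(\beta_\varepsilon(r)-\beta^\circ(y_\Gamma)\bigr)\bigl(\beta_{\Gamma,\varepsilon}(r)-\beta_\varepsilon(r)\bigr)\ge0$, hence $|\beta_\varepsilon(r)|\le\max\bigl\{|\beta^\circ(y_\Gamma)|,|\beta_{\Gamma,\varepsilon}(r)|\bigr\}$, and the minimal-section bound $|\beta^\circ_\Gamma(y_\Gamma)|\le|\beta_{\Gamma,\varepsilon}(r)|$ combined with \eqref{cccond} at the point $y_\Gamma$ closes the estimate. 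All the resolvent facts you invoke ($\beta_\varepsilon(r)\in\beta(y)$, $\beta_{\Gamma,\varepsilon}(r)\in\beta_\Gamma(y_\Gamma)$) are standard, and $\varrho\ge1$, $c_0>0$ ensure the maximum is dominated by $\varrho|\beta_{\Gamma,\varepsilon}(r)|+c_0$, so every step checks out. What each approach buys: your argument is self-contained, never introduces the auxiliary parameter $\varepsilon\varrho$ (achieving directly the ``same parameter for both graphs'' feature that the paper's Remark~A.1 singles out as the point of Lemma~A.1), and makes visible exactly where the domain inclusion and the constants are used; the paper's proof is much shorter on the page, but only because the substantive work is outsourced to the two cited results.
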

\begin{proof} Thanks to \cite[Lemma~4.4]{CC13}, \pier{it is known that} 
\begin{equation*}
	\bigl |\beta_\varepsilon (r)\bigr | 
	\le \varrho \bigl |\beta _{\Gamma,\varepsilon \varrho } (r)\bigr |+c_0
	\quad 
	\text{for all } r \in \mathbb{R},
\end{equation*} 
where \pier{$\beta _{\Gamma,\varepsilon \varrho }$ denotes the Yosida approximation of $\beta _{\Gamma }$
with parameter $\varepsilon \varrho$, i.e.,}
\begin{equation}
	\beta _{\Gamma, \varepsilon \varrho } (r)
	:= \frac{1}{\varepsilon \varrho} \bigl( r-(I+\varepsilon \varrho \beta _\Gamma )^{-1} (r) \bigr ).
	\label{cc}
\end{equation}
\pier{Then, recalling that $\varrho \ge 1$,  we may invoke the fundamental property 
\cite[Proposition~2.6, p.~28]{Bre73} of {Y}osida approximations, which implies that} 
\begin{equation*}
	\bigl| \beta_{\Gamma, \varepsilon \varrho} (r) \bigr| \le |\beta_{\Gamma, \varepsilon} (r) \bigr|
	\quad \text{for all } r \in \mathbb{R},
\end{equation*}
because $\varepsilon \le \varepsilon \varrho$. Thus we get the conclusion. 
\end{proof}

\begin{lemmaa} 
Assume {\rm (A2)$^\prime$}. Then {\rm (\ref{cccond2e})} holds, \pier{that is, 
\begin{equation*}
	\frac 1 \varrho  \bigl| \beta_{\Gamma, \varepsilon} (r) \bigr|-c_0 
	\le
	\bigl |\beta_\varepsilon (r)\bigr | 
	\le \varrho \bigl |\beta _{\Gamma,\varepsilon } (r)\bigr |+c_0
	\quad 
	\text{for all } r \in \mathbb{R},
\end{equation*} 
for all $\varepsilon \in (0,1]$ with the same constants $\varrho \geq 1 $ and $c_0 >0$.}
\end{lemmaa}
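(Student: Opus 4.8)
The plan is to establish the two inequalities in \eqref{cccond2e} separately, exploiting the fact that assumption (A2)$^\prime$ is precisely a pair of one-sided domination conditions of the type (A2), each of which can be fed into the argument already used for Lemma~A.1. Since $D(\beta)=D(\beta_\Gamma)$ under (A2)$^\prime$, the upper estimate in \eqref{cccond2} coincides exactly with the hypothesis (A2); hence the right-hand inequality $|\beta_\varepsilon(r)| \le \varrho|\beta_{\Gamma,\varepsilon}(r)|+c_0$ follows at once from Lemma~A.1, with the same constants and for all $\varepsilon\in(0,1]$.

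For the lower bound I would first rewrite the desired inequality $\frac{1}{\varrho}|\beta_{\Gamma,\varepsilon}(r)|-c_0 \le |\beta_\varepsilon(r)|$ in the equivalent form $|\beta_{\Gamma,\varepsilon}(r)| \le \varrho|\beta_\varepsilon(r)|+\varrho c_0$. The key observation is that the left-hand inequality of \eqref{cccond2}, namely $\frac{1}{\varrho}|\beta_\Gamma^\circ(r)|-c_0 \le |\beta^\circ(r)|$ for $r\in D(\beta)\equiv D(\beta_\Gamma)$, is nothing but $|\beta_\Gamma^\circ(r)| \le \varrho|\beta^\circ(r)|+\varrho c_0$, i.e.\ a condition of the form (A2) with the roles of $\beta$ and $\beta_\Gamma$ interchanged and with $c_0$ replaced by $\varrho c_0$ (the constant $\varrho\ge 1$ being unchanged). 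Because the two effective domains now coincide, the domain inclusion required to invoke \cite[Lemma~4.4]{CC13} is available in both directions, so this interchanged version is legitimate.

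It then suffices to replay the proof of Lemma~A.1 with $\beta$ and $\beta_\Gamma$ swapped. Applying \cite[Lemma~4.4]{CC13} in this interchanged setting gives $|\beta_{\Gamma,\varepsilon}(r)| \le \varrho|\beta_{\varepsilon\varrho}(r)|+\varrho c_0$, where $\beta_{\varepsilon\varrho}$ denotes the Yosida approximation of $\beta$ at parameter $\varepsilon\varrho$ (defined as in \eqref{cc} with $\beta$ in place of $\beta_\Gamma$); since $\varrho\ge 1$ we have $\varepsilon\le\varepsilon\varrho$, so the monotonicity property of Yosida approximations \cite[Proposition~2.6, p.~28]{Bre73} yields $|\beta_{\varepsilon\varrho}(r)| \le |\beta_\varepsilon(r)|$. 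Combining the two bounds and dividing by $\varrho$ produces $\frac{1}{\varrho}|\beta_{\Gamma,\varepsilon}(r)|-c_0 \le |\beta_\varepsilon(r)|$, which is the remaining inequality.

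I do not expect any genuine difficulty here, the statement being essentially a symmetrized reading of Lemma~A.1. The only points requiring care are the bookkeeping of the constant $c_0\mapsto\varrho c_0$ that arises when passing from the lower part of (A2)$^\prime$ to the interchanged domination condition, and the verification that it is the equality $D(\beta)=D(\beta_\Gamma)$, rather than a mere inclusion, that permits \cite[Lemma~4.4]{CC13} to be applied with $\beta$ and $\beta_\Gamma$ in either order.
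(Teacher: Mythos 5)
Your proof is correct and takes essentially the same route as the paper: the upper bound is Lemma~A.1 applied verbatim, and the lower bound is obtained by rewriting it as $|\beta_{\Gamma,\varepsilon}(r)| \le \varrho\,|\beta_\varepsilon(r)| + \varrho\, c_0$ and re-running the Lemma~A.1 argument (via \cite[Lemma~4.4]{CC13} and the Yosida monotonicity property) with the roles of $\beta$ and $\beta_\Gamma$ interchanged. The paper compresses this swapped application into ``follows immediately from Lemma~A.1 again''; you merely make explicit the constant bookkeeping $c_0 \mapsto \varrho c_0$ and the use of $D(\beta)=D(\beta_\Gamma)$ that justify the interchange.
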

\begin{proof} 
\pier{In view of Lemma~A.1, is enough to prove that 
\begin{equation*}
	\frac 1 \varrho  \bigl| \beta_{\Gamma, \varepsilon} (r) \bigr|-c_0 
	\le
	\bigl |\beta_\varepsilon (r)\bigr | 
	\quad 
	\text{for all } r \in \mathbb{R},
\end{equation*} 
which is the same as  
\begin{equation*}
	\bigl |\beta _{\Gamma,\varepsilon } (r)\bigr | 
	\le \varrho\bigl |\beta_\varepsilon (r)\bigr | 
	+
	 \varrho \,c_0
	\quad 
	\text{for all } r \in \mathbb{R}.
\end{equation*} 
But this follows immediately from  Lemma~A.1 again.}
\end{proof}

\begin{remarka}
\pier{\rm Comparing to previous works (see, e.g., \cite{CF15b, CF15, CGS14}\takeshi{)} in which 
the same kind of property~{\rm (\ref{cccond})}
was assumed for the two maximal monot\takeshi{one} graphs, the parameter of the \pier{{Y}osida}  regularizations 
\pier{is here} 
the same for both graphs (see also \cite[Section~2]{CGNS17}) . 
Instead, in the approach devised in {\rm \cite[Lemma~4.4]{CC13}} exactly the approximation  
$\beta _{\Gamma, \varepsilon \varrho}$ defined by (\ref{cc}) was introduced and used for $\beta _{\Gamma}$ .} 
\end{remarka}


\section*{Acknowledgments}
The \pier{authors warmly thank Professor Ken Shirakawa for his valuable advice about~Lemmas~A.1 and~A.2.
P.~Colli acknowledges support 
from the Italian Ministry of Education, 
University and Research~(MIUR): Dipartimenti di Eccellenza Program (2018--2022) 
-- Dept.~of Mathematics ``F.~Casorati'', University of Pavia, and from 
the GNAMPA~(Gruppo Nazionale per l'Analisi Matematica, 
la Probabilit\`a e le loro Applicazioni) of INdAM (Isti\-tuto 
Nazionale di Alta Matematica).} 
T.~Fukao acknowledges the support
from the JSPS KAKENHI Grant-in-Aid for Scientific Research(C), Japan, Grant Number 17K05321 and 
from the Grant Program of The Sumitomo Foundation, Grant Number 190367.

\end{document}